\documentclass[11pt,reqno]{amsart}
\usepackage[latin1]{inputenc} 
\usepackage{indentfirst}
\usepackage{amsmath} 
\usepackage{amsfonts} 
\usepackage{amsthm} 
\usepackage{epsfig} 
\usepackage{graphicx} 
\usepackage{multirow} 
\usepackage{verbatim} 
\usepackage{rotating} 
\usepackage{lettrine}
\usepackage{amscd}
\usepackage{dsfont}
\usepackage{lscape}
\usepackage{ae}
\usepackage{float}
\usepackage[all]{xy}
\usepackage{icomma} 
\usepackage{amsmath} 
\usepackage{amsfonts} 
\usepackage{amsthm} 
\usepackage{varioref} 
\usepackage{layout}
\usepackage[Lenny]{fncychap} 
\usepackage{enumerate}
\usepackage{amsfonts, amsmath, amssymb, stmaryrd, latexsym}
\usepackage{dsfont}
\usepackage{array}
\usepackage{longtable}
\usepackage{geometry}
\geometry{margin=4cm,head=0.5cm,headsep=10pt,foot=1cm}
\usepackage[frenchb,english]{babel}
\usepackage{mathrsfs}
\usepackage[mathscr]{euscript}
\begin{document}
\renewcommand{\baselinestretch}{1.3}
\newcommand{\K}{\mathbb{Q}(\sqrt{d},i)}
\newcommand{\q}{\mathbb{Q}(\sqrt{2p_1p_2})}
\newcommand{\pro}{\prod_{p/\Delta_K}e(p)}
\theoremstyle{plain}
\newtheorem{them}{Theorem}[]
\newtheorem{lem}{Lemma}[]
\newtheorem{propo}{Proposition}[]
\newtheorem{coro}{Corollary}[]
\newtheorem{proprs}{properties}[]
\theoremstyle{remark}
\newtheorem{rema}{Remark}[]
\newtheorem{remas}{Remarks}[]
\newtheorem{exam}{\textbf{Example}}[]
\newtheorem{exams}{\textbf{Numerical Examples}}[]
\newtheorem{df}{definition}[]
\newtheorem{dfs}{definitions}[]
\def\NN{\mathds{N}}
\def\RR{\mathbb{R}}
\def\HH{I\!\! H}
\def\QQ{\mathbb{Q}}
\def\CC{\mathbb{C}}
\def\ZZ{\mathbb{Z}}
\def\OO{\mathcal{O}}
\def\kk{\mathds{k}}
\def\KK{\mathbb{K}}
\def\ho{\mathcal{H}_0^{\frac{h(d)}{2}}}
\def\LL{\mathbb{L}}
\def\h{\mathcal{H}_1}
\def\hh{\mathcal{H}_2}
\def\hhh{\mathcal{H}_3}
\def\hhhh{\mathcal{H}_4}
\def\jj{\overline{\mathcal{H}_3}}
\def\jjj{\overline{\mathcal{H}_4}}
\def\L{\mathds{k}_2^{(2)}}
\def\M{\mathds{k}_2^{(1)}}
\def\k{\mathds{k}^{(*)}}
\def\l{\mathds{L}}
\title[Principalization of $2$-class groups of type $(2,2,2)$]{Principalization of $2$-class groups of type $(2,2,2)$\\ of biquadratic fields
$\QQ\left(\sqrt{\strut p_1p_2q},\sqrt{\strut -1}\right)$}
\author[A. AZIZI]{Abdelmalek Azizi}
\address{Abdelmalek Azizi et Abdelkader Zekhnini: Département de Mathématiques, Faculté des Sciences, Université Mohammed 1, Oujda, Morocco }
\author[A. Zekhnini]{Abdelkader Zekhnini}
\email{abdelmalekazizi@yahoo.fr}
\email{zekha1@yahoo.fr}
\author[M. Taous]{Mohammed Taous}
\address{Mohammed Taous: Département de Mathématiques, Faculté des Sciences et Techniques, Université Moulay Ismail, Errachidia, Morocco}
\email{taousm@hotmail.com}
\author[D. C. Mayer]{Daniel C. Mayer}
\address{Daniel C. Mayer: Naglergasse 53, 8010 Graz, Austria}
\email{algebraic.number.theory@algebra.at\\
URL: http://www.algebra.at}
\thanks{Research of the last author supported by the Austrian Science Fund (FWF): P 26008-N25}
\subjclass[2000]{Primary 11R16, 11R29, 11R32, 11R37; Secondary 20D15}
\keywords{Biquadratic field, Class group, Capitulation, Galois group, Hilbert class tower, Coclass graphs}

\begin{abstract}
Let $p_1\equiv p_2\equiv -q\equiv1 \pmod4$ be different primes such that $\displaystyle\left(\frac{2}{p_1}\right)=
\displaystyle\left(\frac{2}{p_2}\right)=\displaystyle\left(\frac{p_1}{q}\right)=\displaystyle\left(\frac{p_2}{q}\right)=-1$. Put $d=p_1p_2q$ and $i=\sqrt{-1}$, then the bicyclic biquadratic field $\kk=\K$ has an elementary abelian $2$-class group, $\mathbf{C}l_2(\kk)$, of rank $3$. In this paper, we study the principalization of the $2$-classes of $\kk$ in its fourteen unramified abelian extensions $\KK_j$ and $\LL_j$ within $\M$, that is the Hilbert $2$-class field of $\kk$. We determine the nilpotency class, the coclass, generators and the structure of the metabelian Galois group $G=\mathrm{Gal}(\L/\kk)$ of the second Hilbert 2-class field $\L$ of $\kk$. Additionally, the abelian type invariants of the groups $\mathbf{C}l_2(\KK_j)$ and $\mathbf{C}l_2(\LL_j)$ and the length of the $2$-class tower of $\kk$ are given.
\end{abstract}

\maketitle
\section{Introduction}
\label{s:Intro}
The $2$-class tower of imaginary quadratic fields $k=\QQ(\sqrt{d})$ with $2$-class group of type $(2,2,2)$ has been investigated by E. Benjamin, F. Lemmermeyer, and C. Snyder \cite{B.L.S-03}. More recently, H. Nover \cite{No-09} provided evidence of such towers with exactly three stages.
However, nothing was known about base fields of higher degree until A. Azizi, A. Zekhnini, and M. Taous focussed on complex bicyclic biquadratic fields $\kk=\QQ(\sqrt{d},\sqrt{-1})$, which are called {\it special Dirichlet fields} by D. Hilbert \cite{Hb-1894}.
In \cite{AZT14-3}, they determined the maximal unramified pro-$2$ extension of special Dirichlet fields with $2$-class group of type $(2,2,2)$ and radicand $d=2p_1p_2$, where $p_1\equiv p_2\equiv 5\pmod{8}$ denote primes.
It is the purpose of the present article to pursue this research project further for special Dirichlet fields with $\mathbf{C}l_2(\kk)$ of type $(2,2,2)$ and radicand $d=p_1p_2q$ composed of primes $p_1\equiv p_2\equiv 5\pmod{8}$ and $q\equiv 3\pmod{4}$.
As predicted in \cite[\S\ 4.2, pp. 451--452]{Ma-13}, the particular feature of the lattice of intermediate fields between a base field $\kk$ with $2$-class group of type $(2,2,2)$ and its Hilbert $2$-class field $\M$ is the constitution by {\it two layers} of unramified abelian extensions, rather than by just one layer as for a $2$-class group of type $(2,2)$, and it seems to be the first time that complete results are given here for the second layer.
The layout of this paper is the following. First, all main theorems are presented in \S\ \ref{s:Main}.
To be able to compute the Hasse unit index $Q_K=[E_K:W_KE_{K^+}]$ of CM-fields $K$ with maximal real subfield $K^+$
and the unit index $q(K/\QQ)=[E_K:\prod_j^s E_{k_j}]$ of multiquadratic fields $K$ with quadratic subfields $k_j$,
preliminary results on fundamental systems of units (FSU) of such fields are summarized in \S\ \ref{s:Preliminaries}.
In \S\ \ref{s:Tower}, one of the seven unramified quadratic extensions of $\kk$, denoted by $\KK_3=\kk(\sqrt{q})$,
turns out to carry crucial information about the Galois group $G=\mathrm{Gal}(\L/\kk)$ of the second Hilbert $2$-class field $\L$ of $\kk$,
since $\lvert G\rvert=2\cdot h(\KK_3)\ge 64$.
The fact that $\KK_3$, which is contained in the genus field $\kk^{(*)}$ of $\kk$,
has an abelian $2$-class tower permits the conclusion that $\kk$ has a metabelian $2$-class tower of exact length $2$.
The abelian type invariants of $\mathbf{C}l_2(\KK_j)$, in dependence on the $2$-class numbers of the subfields $\QQ(\sqrt{p_1p_2})$ and $\QQ(\sqrt{-p_1p_2})$
of $\KK_3$ and on the Legendre symbol $\left(\frac{p_1}{p_2}\right)$, are determined in \S\ \ref{s:Proofs}.
Explicit generators, in the form of Artin symbols, a presentation of the second $2$-class group $G$, and the structure of $G^\prime\simeq\mathbf{C}l_2(\M)$,
in dependence on the norm of the fundamental unit of $\QQ(\sqrt{p_1p_2})$, are also given in \S\ \ref{s:Proofs}.
The nilpotency class $c(G)$ and the coclass $cc(G)$ turn out to depend on biquadratic residue symbols for $p_1$ and $p_2$.
The principalization kernels $\kappa_{\KK_j/\kk}$ can be given either in terms of generators of $\mathbf{C}l_2(\kk)$ or as norm class groups.
They are determined as kernels of Artin transfers $V_{G,G_j}:G/G^\prime\to G_j/G_j^\prime$ from $G$ to its maximal subgroups
$G_j$, $1\le j\le 7$, using the presentation of $G$. These invariants form the {\it transfer kernel type} (TKT) of $G$ in \cite[Dfn. 1.1, p. 403]{Ma-13}.
Finally, the abelian type invariants of $\mathbf{C}l_2(\LL_j)$, $1\le j\le 7$, are calculated as abelian quotient invariants
$\mathcal{G}_j/\mathcal{G}_j^\prime$ of the subgroups $\mathcal{G}_j$ of index $\lbrack G:\mathcal{G}_j\rbrack=4$ of $G$,
which make up the {\it transfer target type} (TTT) of $G$ in \cite[Dfn. 1.1]{Ma-13}.
We conclude with numerical examples, statistics, and details about the groups $G$ in \S\ \ref{s:Examples}.

 Let $m$ be a square-free integer and $K$ be a number field. Throughout this paper, we adopt the following notation:
 \begin{itemize}
   \item $h(m)$, resp. $h(K)$: the $2$-class number of $\QQ(\sqrt m)$, resp. $K$.
   \item $\varepsilon_m$: the fundamental unit of $\QQ(\sqrt m)$, if $m>0$.
   \item $\mathcal{O}_K$: the ring of integers of $K$.
   \item $E_K$: the unit group of $\mathcal{O}_K$.
   \item $W_K$: the group of roots of unity contained in $K$.
   \item $\omega_K$: the order of $W_K$.
   \item $i=\sqrt{-1}$.
   \item $K^+$: the maximal real subfield of $K$, if $K$ is a CM-field.
   \item $Q_K=[E_K:W_KE_{K^+}]$ is the Hasse unit index, if $K$ is a CM-field.
   \item $q(K/\QQ)=[E_K:\prod_j^s E_{k_j}]$ is the unit index of $K$, if $K$ is multiquadratic and $k_j$ are the quadratic subfields of $K$.
   \item $K^{(*)}$: the absolute genus field of $K$ (over $\QQ$).
   \item $\mathbf{C}l_2(K)$: the 2-class group of $K$.
   \item $\kappa_{L/K}$: the subgroup of classes of $\mathbf{C}l_2(K)$ which become principal in an extension $L/K$
         (the {\it principalization-} or {\it capitulation-kernel} of $L/K$).
 \end{itemize}

\section{Main results}
\label{s:Main}
Let $p_1\equiv p_2\equiv -q \equiv 1 \pmod4$ be different primes satisfying the following conditions \eqref{10:18}:
 \begin{equation}\label{10:18}\displaystyle\left(\frac{2}{p_1}\right)=
      \displaystyle\left(\frac{2}{p_2}\right)=\displaystyle\left(\frac{p_1}{q}\right)=\displaystyle\left(\frac{p_2}{q}\right)=-1.\end{equation}
 Then there exist some positive integers $e$, $f$, $g$ and $h$ such that  $p_1=e^2+4f^2$ and $p_2=g^2+4h^2$. Put  $p_1=\pi_1\pi_2$ and $p_2=\pi_3\pi_4$, where $\pi_1=e+2if$ and $\pi_2=e-2if$ (resp. $\pi_3=g+2ih$ and $\pi_4=g-2ih$) are conjugate prime elements in the cyclotomic field $k=\QQ(i)$ dividing $p_1$ (resp. $p_2$). Denote by $\kk$ the complex bicyclic biquadratic field $\K$, where $d=p_1p_2q$. Its three quadratic subfields are $k=\QQ(i)$, $k_0=\QQ(\sqrt{d})$ and $\overline{k}_0=\QQ(\sqrt{-d})$. Let $\kk_2^{(1)}$ be the Hilbert 2-class field of $\kk$,  $\L$ be its second Hilbert 2-class field and $G$ be the Galois group of $\kk_2^{(2)}/\kk$. According to  \cite{AT09},  $\kk$ has an elementary abelian 2-class group $\mathbf{C}l_2(\kk)$ of rank 3, that is, of type $(2, 2, 2$). The  main results of this paper are  Theorems  \ref{10:2},  \ref{10:3} and  \ref{10:4} below; whereas   Theorem \ref{10:1}  is proved in \cite{AZT12-1},  using  \cite{AT09} and \cite{Ka76}.
\subsection{Unramified extensions of $\kk$}
The first and the second assertion of the following theorem hold according to  \cite{Ka76}  and  \cite{AT09}  respectively, the others are proved in \cite{AZT12-1}. The fields in Theorem  \ref{10:1}  are visualized in Figure  \ref{fig:SubfieldLattice}.
\begin{them}\label{10:1}
Let $p_1$, $p_2$ and $q$ be different primes as specified by equation \eqref{10:18}.
\begin{enumerate}[\rm\indent(1)]
  \item The $2$-class groups   of $k_0$, $\overline{k}_0$ are of type $(2, 2)$.
  \item The $2$-class group,  $\mathbf{C}l_2(\kk)$, of $\kk$ is of type $(2, 2, 2)$.
  \item The discriminant of $\kk$ is: $disc(\kk)=disc(k).disc(k_0).disc(\overline{k}_0)=2^4p_1^2p_2^2q^2.$
  \item $\kk$ has seven unramified quadratic extensions within  $\kk_2^{(1)}$. They are given by:
\begin{center}$\KK_1=\kk(\sqrt{p_1})$,\qquad $\KK_2=\kk(\sqrt{p_2})$,\qquad  $\KK_3=\kk(\sqrt q)$,\\
 $\KK_4=\kk(\sqrt{\pi_1\pi_3})$, \ $\KK_5=\kk(\sqrt{\pi_1\pi_4})$,\ $\KK_6=\kk(\sqrt{\pi_2\pi_3})$ and  $\KK_7=\kk(\sqrt{\pi_2\pi_4})$.
  \end{center}
  \item $\KK_1$, $\KK_2$, $\KK_3$ are intermediate fields between $\kk$ and its genus field $\k$. The fields $\KK_4\simeq\KK_7$
and $\KK_5 \simeq \KK_6$ are pairwise conjugate and thus isomorphic. Consequently $\KK_1$, $\KK_2$, $\KK_3$ are
absolutely abelian, whereas $\KK_4$, $\KK_5$, $\KK_6$, $\KK_7$ are absolutely non-normal over $\QQ$.
  \item $\kk$ has seven unramified bicyclic biquadratic extensions within  $\kk_2^{(1)}$. One of them is
       \begin{center}$\mathbb{L}_1=\KK_1.\KK_2.\KK_3=\k=\QQ(\sqrt{p_1}, \sqrt{p_2}, \sqrt{q}, \sqrt{-1})$,\end{center}
the absolute genus field of $\kk$ and the others are given by:
\begin{center}
$\mathbb{L}_2=\KK_1.\KK_4.\KK_6,  \quad
\mathbb{L}_3=\KK_1.\KK_5.\KK_7$, \quad
$\mathbb{L}_4=\KK_2.\KK_4.\KK_5 \text{ and }
\mathbb{L}_5=\KK_2.\KK_6.\KK_7$.
\end{center}
 Moreover $\LL_2\simeq \LL_3$ and $\LL_4 \simeq\LL_5$ are pairwise conjugate and thus isomorphic and absolutely non-normal, and
 \begin{center} $\mathbb{L}_6=\KK_3.\KK_4.\KK_7$\  \text{ and } \
$\mathbb{L}_7=\KK_3.\KK_5.\KK_6$
  \end{center}
  are absolutely Galois over $\QQ$.
\end{enumerate}
\end{them}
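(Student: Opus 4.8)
The plan is to derive assertions (1)--(3) by classical genus theory, R\'edei's $4$-rank criterion and a class number formula for the $V_4$-extension $\kk/\QQ$, and then to obtain (4)--(6) from the resulting structure $\mathbf{C}l_2(\kk)\simeq(2,2,2)$ by a Galois-theoretic count together with an explicit list of radicands. For the two quadratic subfields I would first read the $2$-rank off the number of ramified primes: $\mathrm{disc}(\overline{k}_0)=-p_1p_2q$ has three ramified primes, so $\mathrm{rank}_2\,\mathbf{C}l_2(\overline{k}_0)=2$, while $\mathrm{disc}(k_0)=4p_1p_2q$ has four, so the narrow $2$-class group of $k_0$ has rank $3$, which drops to $2$ for the ordinary class group since $N(\varepsilon_d)=+1$ is forced by $q\equiv3\pmod4$ dividing $d$. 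To promote ``$2$-rank $2$'' to the exact type $(2,2)$ one checks that the $4$-rank vanishes, i.e.\ that the corresponding R\'edei matrix --- whose off-diagonal entries are the additively written symbols $\left(\frac{p_1}{p_2}\right)$, $\left(\frac{-q}{p_i}\right)$, $\left(\frac{p_i}{q}\right)$, $\left(\frac{2}{p_i}\right)$, all pinned down by \eqref{10:18} and quadratic reciprocity (e.g.\ $\left(\frac{-q}{p_i}\right)=\left(\frac{q}{p_i}\right)=\left(\frac{p_i}{q}\right)=-1$ because $p_i\equiv1\pmod4$) --- has maximal rank, irrespective of the value of $\left(\frac{p_1}{p_2}\right)$. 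For $\kk=\K$ itself I would then invoke the ambiguous class number formula for $\kk/k_0$ (equivalently, Kuroda's class number formula for $\kk/\QQ$ with quadratic subfields $k=\QQ(i)$, $k_0$, $\overline{k}_0$), inserting $h(k)=1$, the values $h(k_0)$ and $h(\overline{k}_0)$ just obtained, and the unit indices $Q_\kk$ and $q(\kk/\QQ)$ coming from the fundamental systems of units treated in \S\ \ref{s:Preliminaries}; the resulting order of $\mathbf{C}l_2(\kk)$, together with a rank argument (via the norm maps to $\mathbf{C}l_2(k_0)$, $\mathbf{C}l_2(\overline{k}_0)$ and the $2$-rank of the genus group), then pins down $\mathbf{C}l_2(\kk)\simeq(2,2,2)$. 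This is the content of \cite{Ka76} for (1) and \cite{AT09} for (2).

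Assertion (3) is then immediate from the conductor--discriminant formula, since $\mathrm{Gal}(\kk/\QQ)\simeq V_4$ has three non-trivial characters whose kernels fix $k$, $k_0$ and $\overline{k}_0$, with conductors $\mathrm{disc}(k)=-4$, $\mathrm{disc}(k_0)=4p_1p_2q$ and $\mathrm{disc}(\overline{k}_0)=-p_1p_2q$, so $\mathrm{disc}(\kk)=(-4)(4p_1p_2q)(-p_1p_2q)=2^4p_1^2p_2^2q^2$. For (4): since $\mathbf{C}l_2(\kk)\simeq(\ZZ/2\ZZ)^3$, the Galois correspondence for $\M/\kk$ provides exactly $2^3-1=7$ subgroups of index $2$, hence exactly $7$ unramified quadratic extensions of $\kk$ inside $\M$. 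By (3) the genus field $\k=\QQ(\sqrt{p_1},\sqrt{p_2},\sqrt q,i)$ satisfies $[\k:\kk]=4$, so its three quadratic subextensions are $\KK_1=\kk(\sqrt{p_1})$, $\KK_2=\kk(\sqrt{p_2})$, $\KK_3=\kk(\sqrt q)$; the remaining four I obtain from the factorizations $p_1=\pi_1\pi_2$, $p_2=\pi_3\pi_4$ in $\QQ(i)$. Indeed, at a prime $\mathfrak{P}$ of $\kk$ above $p_1$ one has $v_{\mathfrak{P}}(\pi_1\pi_3)=v_{\mathfrak{P}}(\pi_1)\in2\ZZ$ (the primes above $p_1$ ramify in $\kk/\QQ(i)$), similarly at the primes above $p_2$, while $\pi_1\pi_3$ is a unit at every other finite prime except those above $2$, where the hypotheses $\left(\frac{2}{p_1}\right)=\left(\frac{2}{p_2}\right)=-1$ make $\pi_1\pi_3$ a local square; hence $\KK_4=\kk(\sqrt{\pi_1\pi_3})$, and likewise $\KK_5$, $\KK_6$, $\KK_7$, are unramified over $\kk$.

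For (5): the fields $\KK_1,\KK_2,\KK_3$ are multiquadratic, hence abelian over $\QQ$, and lie in $\k$ by construction. For the others, complex conjugation restricts to the automorphism $\sigma$ of $\kk$ fixing $\sqrt d$ and sending $i\mapsto-i$; since $\sigma$ interchanges $\pi_1\leftrightarrow\pi_2$ and $\pi_3\leftrightarrow\pi_4$, it maps $\KK_4$ to $\KK_7$ and $\KK_5$ to $\KK_6$, which are genuinely distinct fields because $\pi_1\pi_3\not\equiv\pi_2\pi_4$ and $\pi_1\pi_4\not\equiv\pi_2\pi_3$ modulo $(\kk^\times)^2$; hence $\KK_4\simeq\KK_7$ and $\KK_5\simeq\KK_6$ are conjugate but not normal over $\QQ$, with Galois closures of degree $16$.

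For (6): again by the Galois correspondence for $\M/\kk$ there are exactly $7$ subgroups of index $4$, hence $7$ unramified bicyclic biquadratic extensions of $\kk$ inside $\M$, each one the compositum of the three quadratic subextensions it contains. In $\kk^\times/(\kk^\times)^2$ one has the relations $[p_1][p_2][q]=[d]=1$, $[\pi_1][\pi_2]=[p_1]$ and $[\pi_3][\pi_4]=[p_2]$, so the classes $[p_1],[p_2],[q],[\pi_1\pi_3],[\pi_2\pi_3],[\pi_1\pi_4],[\pi_2\pi_4]$ are exactly the seven non-trivial elements of the rank-$3$ group they generate (which identifies the seven $\KK_j$), and a triple $\{\KK_i,\KK_j,\KK_k\}$ has compositum of degree $4$ over $\kk$ precisely when its three radicands multiply to a square; running through the relations singles out the triples $\LL_1,\dots,\LL_7$ (for instance $[p_1][\pi_1\pi_3][\pi_2\pi_3]=[p_1]^2=1$ gives $\LL_2=\KK_1\KK_4\KK_6$, and $[q][\pi_1\pi_3][\pi_2\pi_4]=([p_1][p_2])^2=1$ gives $\LL_6=\KK_3\KK_4\KK_7$), and unramifiedness of each $\LL_j$ is inherited from that of the $\KK_j$. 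For the normality claims, $\LL_1=\k$ is multiquadratic, hence abelian over $\QQ$; $\sigma$ maps $\LL_2$ to $\LL_3$ and $\LL_4$ to $\LL_5$, giving the conjugacies and isomorphisms; and $\LL_6=\kk(\sqrt q,\sqrt{\pi_1\pi_3})$ also contains $\sqrt{\pi_2\pi_4}=\sqrt{p_1p_2}/\sqrt{\pi_1\pi_3}$, so it is the compositum of the $\QQ$-Galois fields $\QQ(\sqrt{\pi_1\pi_3},\sqrt{\pi_2\pi_4})$ and $\QQ(\sqrt q)$ and hence Galois over $\QQ$, and likewise for $\LL_7$. The main obstacle throughout is the last part of the first step: upgrading the $2$-rank to the exact abelian type in (1) and (2) requires simultaneous control of the $4$-ranks (via R\'edei symbols) and of the unit indices $Q_\kk$ and $q(\kk/\QQ)$, which is exactly why the fundamental systems of units are developed first in \S\ \ref{s:Preliminaries} and why (1) and (2) are quoted from \cite{Ka76} and \cite{AT09} rather than reproved here.
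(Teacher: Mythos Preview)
Your outline is sound and, in spirit, matches what the paper does: the paper does not prove Theorem~\ref{10:1} at all but simply cites \cite{Ka76} for (1), \cite{AT09} for (2), and \cite{AZT12-1} for (3)--(6), exactly as you note at the end of your proposal. Your sketch of (1) via genus theory plus R\'edei's $4$-rank criterion, of (3) via the conductor--discriminant formula, and of (5)--(6) via the action of complex conjugation on the radicands, is correct and is essentially what those references carry out. The combinatorial bookkeeping in (6), identifying the seven triples $\{\KK_i,\KK_j,\KK_k\}$ whose radicands multiply to a square in $\kk^\times/(\kk^\times)^2$, is also right.

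One point deserves more care. In (4) you assert that at primes $\mathfrak{P}\mid 2$ of $\kk$ ``the hypotheses $\left(\frac{2}{p_1}\right)=\left(\frac{2}{p_2}\right)=-1$ make $\pi_1\pi_3$ a local square.'' What is actually needed is only that $\kk_{\mathfrak{P}}(\sqrt{\pi_1\pi_3})/\kk_{\mathfrak{P}}$ be unramified, and the cleanest way to see this is not a direct local computation but the observation that $(1+i)$ is \emph{unramified} in $\kk/\QQ(i)$ (since $-d\equiv 1\pmod 4$), so one may instead argue via the relative genus field of $\kk/\QQ(i)$: the five primes $\pi_1,\pi_2,\pi_3,\pi_4,q$ of $\QQ(i)$ ramify in $\kk$, and the corresponding relative genus field, which is unramified over $\kk$ by construction, is exactly $\kk(\sqrt{p_1},\sqrt{p_2},\sqrt{\pi_1\pi_3})$ once one checks the unit--norm index. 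This avoids any delicate $2$-adic square computation and is closer to how \cite{AZT12-1} proceeds. Alternatively, once you know that the seven listed radicands represent the seven non-trivial classes of a rank-$3$ subspace of $\kk^\times/(\kk^\times)^2$ and that $\mathbf{C}l_2(\kk)\simeq(2,2,2)$ gives exactly seven unramified quadratic extensions, it suffices to verify unramifiedness for a single $\KK_j$ with $j\ge 4$, since the other three then follow from the group structure and from conjugation (your argument in (5)); but you still need that one honest verification at $2$.
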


\begin{figure}[ht]
\caption{Subfield lattice of the Hilbert \(2\)-class field \(\kk_2^{(1)}\) of \(\kk\)}
\label{fig:SubfieldLattice}
\setlength{\unitlength}{0.8cm}
\begin{picture}(14,13)(-8,0)

\put(-8,12.3){\makebox(0,0)[cb]{Degree}}
\put(-8,10){\vector(0,1){2}}
\put(-8,10){\line(0,-1){10}}
\multiput(-8.1,10)(0,-2){6}{\line(1,0){0.2}}
\put(-8.2,10){\makebox(0,0)[rc]{\(32\)}}
\put(-8.2,8){\makebox(0,0)[rc]{\(16\)}}
\put(-8.2,6){\makebox(0,0)[rc]{\(8\)}}
\put(-8.2,4){\makebox(0,0)[rc]{\(4\)}}
\put(-8.2,2){\makebox(0,0)[rc]{\(2\)}}
\put(-8.2,0){\makebox(0,0)[rc]{\(1\)}}

\put(0,0){\circle*{0.2}}
\multiput(-3,2)(3,0){3}{\circle*{0.2}}
\put(0,4){\circle*{0.2}}
\multiput(-6,6)(2,0){2}{\circle{0.1}}
\multiput(-2,6)(2,0){3}{\circle*{0.2}}
\multiput(4,6)(2,0){2}{\circle{0.1}}
\put(-6,8){\circle{0.2}}
\multiput(-4,8)(2,0){2}{\circle{0.1}}
\put(0,8){\circle*{0.2}}
\multiput(2,8)(2,0){2}{\circle{0.1}}
\put(6,8){\circle{0.2}}
\put(0,10){\circle{0.2}}
\put(0,12){\circle{0.2}}

\multiput(0,0)(-3,2){2}{\line(3,2){3}}
\multiput(0,0)(3,2){2}{\line(-3,2){3}}
\multiput(0,0)(0,2){6}{\line(0,1){2}}
\multiput(0,4)(-2,4){2}{\line(1,1){2}}
\multiput(0,4)(2,4){2}{\line(-1,1){2}}
\multiput(0,4)(-4,4){2}{\line(2,1){4}}
\multiput(0,4)(4,4){2}{\line(-2,1){4}}
\multiput(0,4)(-6,4){2}{\line(3,1){6}}
\multiput(0,4)(6,4){2}{\line(-3,1){6}}
\multiput(6,6)(-4,0){2}{\line(0,1){2}}
\multiput(4,6)(-2,0){2}{\line(1,1){2}}
\put(2,6){\line(-1,1){2}}
\put(-2,6){\line(1,1){2}}
\multiput(-4,6)(2,0){2}{\line(-1,1){2}}
\multiput(-6,6)(4,0){2}{\line(0,1){2}}

\put(-0.2,-0.2){\makebox(0,0)[rc]{\(\mathbb{Q}\)}}
\put(-3.2,2){\makebox(0,0)[rc]{\(k_0=\mathbb{Q}\left(\sqrt{\strut p_1p_2q}\right)\)}}
\put(-0.2,2){\makebox(0,0)[rc]{\(k=\)}}
\put(0.2,2){\makebox(0,0)[lc]{\(\mathbb{Q}\left(\sqrt{\strut -1}\right)\)}}
\put(3.2,2){\makebox(0,0)[lc]{\(\overline{k}_0=\mathbb{Q}\left(\sqrt{\strut -p_1p_2q}\right)\)}}
\put(-0.4,3.9){\makebox(0,0)[rc]{\(\kk\)}}
\put(0.4,3.9){\makebox(0,0)[lc]{\(=\mathbb{Q}\left(\sqrt{\strut p_1p_2q},\sqrt{\strut -1}\right)\)}}

\put(-6,5.3){\makebox(0,0)[cc]{First Layer}}

\put(-6.2,6){\makebox(0,0)[rc]{\(\KK_5\)}}
\put(-4.2,6){\makebox(0,0)[rc]{\(\simeq \KK_6\)}}
\put(-2.2,6){\makebox(0,0)[rc]{\(\KK_1\)}}
\put(-0.2,6){\makebox(0,0)[rc]{\(\KK_3=\)}}
\put(0.2,5.7){\makebox(0,0)[lb]{\(\kk\left(\sqrt{q}\right)\)}}
\put(2.2,6){\makebox(0,0)[lc]{\(\KK_2\)}}
\put(4.2,6){\makebox(0,0)[lc]{\(\KK_4 \simeq\)}}
\put(6.2,6){\makebox(0,0)[lc]{\(\KK_7\)}}


\put(-6,8.7){\makebox(0,0)[cc]{Second Layer}}

\put(-6.2,8){\makebox(0,0)[rc]{\(\LL_7\)}}
\put(-4.2,8){\makebox(0,0)[rc]{\(\LL_3\)}}
\put(-2.2,8){\makebox(0,0)[rc]{\( \simeq \LL_2\)}}
\put(-0.2,8){\makebox(0,0)[rc]{\(\LL_1=\)}}
\put(0.2,8.1){\makebox(0,0)[lc]{\(\kk^{(\ast)}\)}}
\put(2.2,8){\makebox(0,0)[lc]{\(\LL_4 \simeq\)}}
\put(4.2,8){\makebox(0,0)[lc]{\(\LL_5\)}}
\put(6.2,8){\makebox(0,0)[lc]{\(\LL_6\)}}

\put(-0.2,10.2){\makebox(0,0)[rc]{\(\kk_2^{(1)}\)}}
\put(-0.2,12.2){\makebox(0,0)[rc]{\(\kk_2^{(2)}\)}}

\end{picture}
\end{figure}
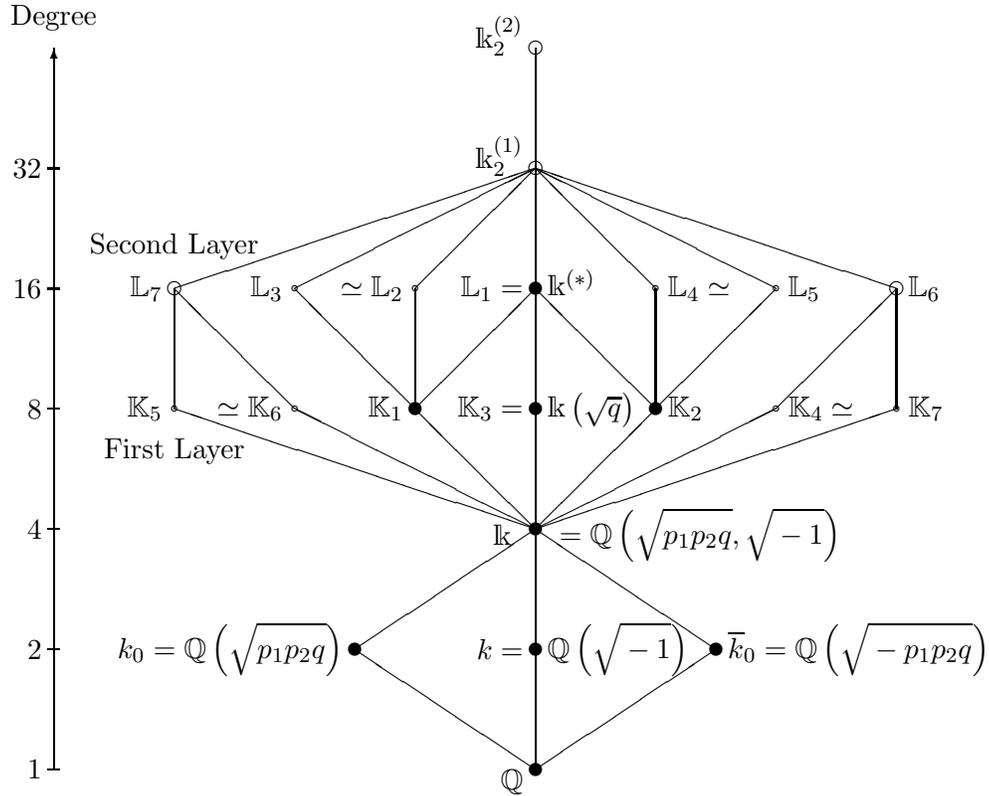

\subsection{Structure of $G=\mathrm{Gal}(\L/\kk)$}
 Let $\mathcal{H}_1$ (resp. $\mathcal{H}_2$, $\mathcal{H}_3$) be the prime ideal of $\kk$ above  $\pi_1$ (resp. $\pi_2$, $\pi_3$).
\begin{them}\label{10:2}
Keep the preceding  assumptions, then
\begin{enumerate}[\upshape\indent(1)]
  \item  $\mathbf{C}l_2(\kk)=\langle[\mathcal{H}_1],  [\mathcal{H}_2], [\mathcal{H}_3]\rangle\simeq(2, 2, 2)$.
  \item $\mathbf{C}l_2(\KK_3)\simeq\left\{
   \begin{array}{ll}
   (2^{n+1}, 2^{m+1})  & \text{ if } \left(\frac{p_1}{p_2}\right)=-1,\\
   (2^{n+2}, 2^m)  & \text{ if } \left(\frac{p_1}{p_2}\right)=1,
   \end{array}\right.$\\
   where $n$ and $m$  are determined by:\\  $2^{m+1}=h(-p_1p_2)$, $m\geq2$, and $2^{n}=h(p_1p_2)$, $n\geq1$.
  \item The length of the $2$-class field tower of $\kk$ is $2$.
  \item In dependence on the sign of $N(\varepsilon_{p_1p_2})$, the second $2$-class group $G=\mathrm{G}al(\kk_2^{(2)}/\kk)$ of $\kk$ is given by:
  \begin{enumerate}[\upshape\indent(i)]
\item If $N(\varepsilon_{p_1p_2})=-1$, then
\begin{align*}
G=\langle\  \rho, \tau, \sigma: \quad &\rho^4=\sigma^{2^{m+1}}=\tau^{2^{n+2}}=1,\ \sigma^{2^{m}}=\tau^{2^{n+1}},\ \rho^2=\tau^{2^{n}}\sigma^{2^{m-1}},\\
                              & [\tau, \sigma]=1,  \  [\sigma, \rho]=\sigma^{2^m-2},\  [\rho, \tau]=\tau^2\ \rangle.
\end{align*}
\item If $N(\varepsilon_{p_1p_2})=1$, then
\begin{align*}
G=\langle \rho, \tau, \sigma: \quad & \rho^4=\sigma^{2^{m}}=\tau^{2^{n+2}}=1,\   \rho^2=\tau^{2^{n+1}}\sigma^{2^{m-1}} \text{ or } \rho^2=\sigma^{2^{m-1}}, \\
                                    & [\tau, \sigma]=1,\  [\rho, \sigma]=\sigma^{2},\  [\tau, \rho]=\tau^{2^{n+1}-2} \rangle.
\end{align*}
\end{enumerate}
\item The derived subgroup of $G$ is $G'=\langle \sigma^{2}, \tau^2 \rangle\simeq\mathbf{C}l_2(\kk^{(1)}_2)$. It is of type\\
   $\left\{
   \begin{array}{ll}
   \left(2^{\min(n, m-1)},  2^{\max(m, n+1)}\right)=\left(2, 2^{\max(m, n+1)}\right) &  \text{ if } N(\varepsilon_{p_1p_2})=-1,\\
   \left(2^{n+1}, 2^{m-1}\right)  & \text{ if } N(\varepsilon_{p_1p_2})=1.
   \end{array}\right.$
\item The  coclass  of G is equal to $4$ if $\left(\frac{p_1}{p_2}\right)=1$, $N(\varepsilon_{p_1p_2})=1$ and  $\left(\frac{p_1}{p_2}\right)_4\left(\frac{p_2}{p_1}\right)_4=-1$, and it is equal to $3$ otherwise.  The nilpotency class  of $G$ is given by:\\
$\left\{ \begin{array}{ll}
m+1  \text{ if } \left(\frac{p_1}{p_2}\right)=-1,\\
m   \text{ if } \left(\frac{p_1}{p_2}\right)=1\text{ and }\left(\frac{p_1}{p_2}\right)_4\left(\frac{p_2}{p_1}\right)_4=-1,\\
 n+2  \text{ if } \left(\frac{p_1}{p_2}\right)=1 \text{ and }
 \left\{ \begin{array}{ll}
 N(\varepsilon_{p_1p_2})=-1 \text{ or } \\
   N(\varepsilon_{p_1p_2})=1 \text{ and } \left(\frac{p_1}{p_2}\right)_4=\left(\frac{p_2}{p_1}\right)_4=1.
   \end{array}\right.
\end{array}\right.$
\end{enumerate}
\end{them}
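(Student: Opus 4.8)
The plan is to reduce everything to the arithmetic of the single distinguished field $\KK_3=\kk(\sqrt q)$ and then to finish by purely group-theoretic bookkeeping inside $G$.

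\emph{Parts (1)--(2).} That $\mathbf{C}l_2(\kk)\simeq(2,2,2)$ is Theorem~\ref{10:1}(2); to see that the classes $[\mathcal H_1],[\mathcal H_2],[\mathcal H_3]$ of the primes above $\pi_1,\pi_2,\pi_3$ form a basis, I would combine the ambiguous class number formula with \cite{AT09} and verify $\QQ$-linear independence by computing the decomposition of $\pi_1,\pi_2,\pi_3$ in the seven quadratic extensions $\KK_j$ of Theorem~\ref{10:1}(4). For (2), I would regard $\KK_3=\QQ(\sqrt{p_1p_2},\sqrt q,i)$ as an imaginary multiquadratic, in fact CM, field, compute $h(\KK_3)$ from the class number formula for multiquadratic fields over its quadratic and biquadratic subfields --- evaluating the unit index $q(\KK_3/\QQ)$ with the fundamental-system-of-units results recorded in \S~\ref{s:Preliminaries}, and, for CM layers, the Hasse unit index --- and obtain $\mathrm{rank}\,\mathbf{C}l_2(\KK_3)=2$ from genus theory. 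The two ``building blocks'' are $\mathbf{C}l_2(\QQ(\sqrt{p_1p_2}))$ and $\mathbf{C}l_2(\QQ(\sqrt{-p_1p_2}))$, of orders $2^{n}$ and $2^{m+1}$, and the Legendre symbol $\left(\frac{p_1}{p_2}\right)$ enters by deciding whether a generator of the relative (imaginary) part is a square modulo the real subfield $\QQ(\sqrt{p_1p_2},\sqrt q)$, which is what separates the two types $(2^{n+1},2^{m+1})$ and $(2^{n+2},2^m)$.

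\emph{Part (3).} Since $\M/\KK_3$ is unramified abelian, one has the chain $\M\subseteq(\KK_3)_2^{(1)}\subseteq\L$, whence $\lvert G\rvert\ge 2\,h(\KK_3)\ge 64>8$, so $G'\neq1$ and the tower has length $\ge 2$. Because $\mathbf{C}l_2(\KK_3)$ has rank $2$ with the explicit structure just found, I would show $\KK_3$ has an \emph{abelian} $2$-class tower --- e.g.\ by proving that the $2$-classes of $\KK_3$ capitulate in its genus field, or via the appropriate criterion for rank-$2$ base fields --- so that the maximal unramified pro-$2$ extension of $\KK_3$ already equals $(\KK_3)_2^{(1)}$; together with the chain above this forces $\L=(\KK_3)_2^{(1)}$, hence length exactly $2$ and $\lvert G\rvert=2\,h(\KK_3)$. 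The payoff is that $G_3:=\mathrm{Gal}(\L/\KK_3)$ is an \emph{abelian maximal subgroup} of $G$, with $G_3\simeq\mathbf{C}l_2(\KK_3)$.

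\emph{Parts (4)--(6).} Now $G$ is metabelian ($G'=\mathrm{Gal}(\L/\M)\simeq\mathbf{C}l_2(\M)$ is abelian) with $G/G'\simeq(2,2,2)$ and an abelian maximal subgroup of known type. Pick $\rho,\tau,\sigma$ as Artin symbols lifting a basis of $\mathbf{C}l_2(\kk)$ so that $G_3=\langle\sigma,\tau\rangle$ and $\rho\notin G_3$; then $[\sigma,\tau]=1$, $G'$ is central in $G_3$, and $G'=\langle\sigma^2,\tau^2\rangle$ is $2$-generated. Matching $G'$ with $\mathbf{C}l_2(\M)$ --- whose structure is dictated by $h(p_1p_2)$, i.e.\ by $N(\varepsilon_{p_1p_2})$, together with the genus numbers --- yields (5) and the orders of $\sigma$ and $\tau$; the power relations for $\rho$ (so $\rho^4=1$ and the value of $\rho^2$) follow from the orders of the relevant Artin symbols, equivalently from the abelian type invariants $G_j/G_j'\simeq\mathbf{C}l_2(\KK_j)$ of the remaining six maximal subgroups computed in \S~\ref{s:Proofs}; and the commutator relations $[\sigma,\rho]$, $[\rho,\tau]$ are then determined modulo $\gamma_3(G)$ and propagated down the lower central series, the residual ambiguity (the choice $\rho^2=\tau^{2^{n+1}}\sigma^{2^{m-1}}$ versus $\rho^2=\sigma^{2^{m-1}}$, and the exact exponents in $[\sigma,\rho]=\sigma^{2^m-2}$ and $[\rho,\tau]=\tau^2$ or $\tau^{2^{n+1}-2}$) being pinned down by the biquadratic residue symbols $\left(\frac{p_1}{p_2}\right)_4,\left(\frac{p_2}{p_1}\right)_4$ through a Scholz-type reciprocity law, which simultaneously encodes the capitulation behaviour. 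Finally (6) is a mechanical computation of the lower central series $\gamma_2(G)=G'\supseteq\gamma_3(G)=[G',G]\supseteq\cdots$ from the presentation: its length is $c(G)$ and $cc(G)=\log_2\lvert G\rvert-c(G)$, and the case split stated there is exactly the one carried by those commutator exponents.

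\emph{Main obstacle.} Two points require real work: first, the precise \emph{structure} --- not merely the order --- of $\mathbf{C}l_2(\KK_3)$, which needs the delicate unit-index and $N(\varepsilon_{p_1p_2})$ input together with careful genus theory; and second, the uniqueness step in (4), namely checking that $G/G'$, $G'$, all seven $G_j/G_j'$, the orders of the three generators, and an a priori coclass bound determine $G$ up to isomorphism. The latter amounts to invoking the classification of metabelian $2$-groups possessing an abelian maximal subgroup --- equivalently, locating $G$ on the relevant coclass graph in the sense of \cite{Ma-13}.
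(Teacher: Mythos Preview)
Your outline for (1) and (3) tracks the paper closely: (3) is exactly Theorem~\ref{10:14}, proved via the Benjamin--Lemmermeyer--Snyder criterion for rank-$2$ fields after computing $h(\kk^{(*)})=h(\KK_3)/2$. For (2) the paper does obtain order and rank as you suggest (Lemmas~\ref{10:7}, \ref{10:13}), but the \emph{structure} is gotten more constructively: explicit generators $\mathfrak A,\mathfrak P$ of $\mathbf{C}l_2(\KK_3)$ are lifted from generators $\mathfrak 2_{F_1}, I$ of $\mathbf{C}l_2(F_1)$, $F_1=\QQ(\sqrt{p_1p_2},i)$ (Theorem~\ref{10:10}, Lemma~\ref{10:11}), and $\mathfrak A^2,\mathfrak P^2$ are pinned down via the group-ring identity $2+(1+t+s+ts)=(1+t)+(1+s)+(1+ts)$ in the $V_4$-extension $\KK_3/\QQ(i)$. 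This is not cosmetic: the explicit relations $\mathcal H_1\sim\mathfrak A^{2^n}\mathfrak P^{2^{m-1}}$ (or the variants), $N_{\KK_3/\kk}(\mathfrak P)\sim\mathcal H_1\mathcal H_2$, and $N_{\KK_3/\kk}(\mathfrak A)\sim\mathcal H_1\mathcal H_3$ or $\mathcal H_2\mathcal H_3$ (Lemma~\ref{10:19}) obtained along the way are precisely what drives (4).

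For (4) the paper's route is direct and arithmetic, not classification-based. One sets $\sigma=\bigl(\frac{\L/\KK_3}{\mathfrak P}\bigr)$, $\tau=\bigl(\frac{\L/\KK_3}{\mathfrak A}\bigr)$ --- Artin symbols of ideals \emph{of $\KK_3$}, not lifts of classes from $\kk$ --- and $\rho=\bigl(\frac{\L/\kk}{\mathcal H_1}\bigr)$. Every relation then drops out of Artin functoriality: $\rho^2=\bigl(\frac{\L/\KK_3}{\mathcal H_1}\bigr)$ is read off from the expression of $[\mathcal H_1]$ in $\langle[\mathfrak A],[\mathfrak P]\rangle$, and $\sigma\cdot\sigma^{\rho}=\bigl(\frac{\L/\KK_3}{N_{\KK_3/\kk}(\mathfrak P)}\bigr)$ together with the known image of $N_{\KK_3/\kk}(\mathfrak P)$ in $\mathbf{C}l_2(\KK_3)$ yields $[\sigma,\rho]$ exactly; likewise for $\tau$. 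No uniqueness or coclass-graph identification step is needed. Your plan, by contrast, has a circularity risk: you want to use $\mathbf{C}l_2(\KK_j)$ for $j\neq3$ to constrain $\rho^2$ and the commutators, but in the paper those invariants are \emph{derived from} the presentation of $G$ (Tables~\ref{10:23}, \ref{10:21}), not computed independently; and ``matching $G'$ with $\mathbf{C}l_2(\M)$'' runs the wrong direction, since $\mathbf{C}l_2(\M)$ is nowhere computed except as $G'$. Finally, the biquadratic symbols enter only through Lemma~\ref{10:12}, which constrains the pair $(m,n)$; they do not resolve the $\rho^2$ ambiguity in the $N(\varepsilon_{p_1p_2})=1$ case --- the paper deliberately leaves that ``or'' in the statement.
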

\subsection{Abelian type invariants and capitulation kernels}
Let $N_j$ denote the subgroup $N_{\KK_j/\kk}(\mathbf{C}l_2(\KK_j))$ of $\mathbf{C}l_2(\kk)$ and let $\kappa_{\KK/\kk}$ denote the (principalization- or capitulation-)kernel of the natural class
extension homomorphism  $J_{\KK/\kk}: \mathbf{C}l_2(\kk)\longrightarrow \mathbf{C}l_2(\KK)$, where $\KK$ is an unramified extension of $\kk$ within $\kk_2^{(1)}$.
\begin{them}\label{10:3}
Put  $\pi=\left(\frac{\pi_1}{\pi_3}\right)$.
  \begin{enumerate}[\rm\indent(1)]
\item For all $j\neq3$, there are exactly four classes which capitulate in $\KK_j$, but only two classes which capitulate in $\KK_3$. More precisely, we have:
 \begin{enumerate}[\rm\indent(i)]
 \item  $\kappa_{\KK_1/\kk}=\langle[\mathcal{H}_1], [\mathcal{H}_2]\rangle$,\qquad
   $\kappa_{\KK_2/\kk}=\langle[\mathcal{H}_1\mathcal{H}_2], [\mathcal{H}_3]\rangle$,\\
     $\kappa_{\KK_3/\kk}=
     \left\{\begin{array}{ll}
     \langle[\mathcal{H}_1\mathcal{H}_2]\rangle & \text{ if } N(\varepsilon_{p_1p_1})=1,\\
     \langle[\mathcal{H}_1\mathcal{H}_3]\rangle \text{ or  }\langle[\mathcal{H}_2\mathcal{H}_3]\rangle & \text{ if } N(\varepsilon_{p_1p_1})=-1.
     \end{array}
    \right.$\\
     Moreover, we have:\\
    $\left\{\begin{array}{ll}
\kappa_{\KK_1/\kk}=N_2(\mathbf{C}l_2(\KK_2)),\ \
   \kappa_{\KK_2/\kk}=N_1(\mathbf{C}l_2(\KK_1)) \text{ if } \left(\frac{p_1}{p_2}\right)=1,\\
\kappa_{\KK_1/\kk}=N_1(\mathbf{C}l_2(\KK_1)),\ \
   \kappa_{\KK_2/\kk}=N_2(\mathbf{C}l_2(\KK_2)) \text{ if } \left(\frac{p_1}{p_2}\right)=-1.
   \end{array}\right.$\\
\item
 $\kappa_{\KK_4/\kk} =\langle[\mathcal{H}_1], [\mathcal{H}_3]\rangle$, \quad
 $\kappa_{\KK_5/\kk} =\langle[\mathcal{H}_1], [\mathcal{H}_2\mathcal{H}_3]\rangle$, \quad
 $\kappa_{\KK_6/\kk} =\langle[\mathcal{H}_2], [\mathcal{H}_3]\rangle$\  and\
 $\kappa_{\KK_7/\kk} =\langle[\mathcal{H}_2], [\h\mathcal{H}_3]\rangle.$
Moreover, we have:\\
$(a)$ If $\left(\frac{p_1}{p_2}\right)=1$, then\\
 $\kappa_{\KK_4/\kk} =
\left\{\begin{array}{ll}
N_4(\mathbf{C}l_2(\KK_4)) \text{ if } \pi=1,\\
N_7(\mathbf{C}l_2(\KK_7)) \text{ if } \pi=-1,
      \end{array}
    \right.$
 $\kappa_{\KK_5/\kk} =
\left\{\begin{array}{ll}
N_5(\mathbf{C}l_2(\KK_5)) \text{ if } \pi=1,\\
N_6(\mathbf{C}l_2(\KK_6)) \text{ if } \pi=-1,
      \end{array}
    \right.$
 $\kappa_{\KK_6/\kk} =
\left\{\begin{array}{ll}
N_6(\mathbf{C}l_2(\KK_6)) \text{ if } \pi=1,\\
N_5(\mathbf{C}l_2(\KK_5)) \text{ if } \pi=-1,
      \end{array}
    \right.$
 $\kappa_{\KK_7/\kk} =
\left\{\begin{array}{ll}
N_7(\mathbf{C}l_2(\KK_7)) \text{ if } \pi=1,\\
N_4(\mathbf{C}l_2(\KK_4)) \text{ if } \pi=-1.
      \end{array}
    \right.$\\
$(b)$ If  $\left(\frac{p_1}{p_2}\right)=-1$, then\\
 $\kappa_{\KK_4/\kk} =
\left\{\begin{array}{ll}
N_4(\mathbf{C}l_2(\KK_4)) \text{ if } \pi=-1,\\
N_7(\mathbf{C}l_2(\KK_7)) \text{ if } \pi=1,
      \end{array}
    \right.$
 $\kappa_{\KK_5/\kk} =
\left\{\begin{array}{ll}
N_6(\mathbf{C}l_2(\KK_6)) \text{ if } \pi=-1,\\
N_5(\mathbf{C}l_2(\KK_5)) \text{ if } \pi=1,
      \end{array}
    \right.$\\
 $\kappa_{\KK_6/\kk} =
\left\{\begin{array}{ll}
N_5(\mathbf{C}l_2(\KK_5)) \text{ if } \pi=-1,\\
N_6(\mathbf{C}l_2(\KK_6)) \text{ if } \pi=1,
      \end{array}
    \right.$ \
 $\kappa_{\KK_7/\kk} =
\left\{\begin{array}{ll}
N_7(\mathbf{C}l_2(\KK_7)) \text{ if } \pi=-1,\\
N_4(\mathbf{C}l_2(\KK_4)) \text{ if } \pi=1.
      \end{array}
    \right.$
\end{enumerate}
\item  All the extensions $\KK_j$ satisfy  Taussky's condition $(A)$, i.e. $\kappa_{\KK_j/\kk}\cap N_j>1$ $($\cite{Ta-70}$)$.
 \item  For all $j$,   $\kappa_{\LL_j/\kk}= \mathbf{C}l_2(\kk)$  $($total $2$-capitulation$)$,  and each $\LL_j$ is of type $(A)$.
\end{enumerate}
\end{them}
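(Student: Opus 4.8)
The plan is to reduce Theorem~\ref{10:3} to computations inside the finite metabelian $2$-group $G=\mathrm{Gal}(\L/\kk)$ presented in Theorem~\ref{10:2}, via Artin's reciprocity law. Since the $2$-class tower of $\kk$ has length $2$ (Theorem~\ref{10:2}(3)), $\L$ is already the maximal unramified $2$-extension of $\kk$; hence for any of the fields $F=\KK_j$ or $F=\LL_j$, writing $H:=\mathrm{Gal}(\L/F)$, reciprocity gives $\mathbf{C}l_2(\kk)\simeq G/G'$, identifies the norm class group $N_{F/\kk}(\mathbf{C}l_2(F))$ with $HG'/G'\le G/G'$, and identifies the capitulation kernel $\kappa_{F/\kk}$ with $\ker\bigl(V_{G,H}\colon G/G'\to H/H'\bigr)$, the kernel of the Artin transfer. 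Thus the argument has two ingredients: (a) attach to each $\KK_j$ the correct maximal subgroup $G_j=\mathrm{Gal}(\L/\KK_j)$, and to each $\LL_j$ the correct index-$4$ subgroup $\mathcal G_j=\mathrm{Gal}(\L/\LL_j)$; (b) evaluate the transfers.

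\textbf{The correspondence $\KK_j\leftrightarrow G_j$.}
By class field theory $N_j:=N_{\KK_j/\kk}(\mathbf{C}l_2(\KK_j))$ is the kernel of the Artin map $\mathbf{C}l_2(\kk)\to\mathrm{Gal}(\KK_j/\kk)$, hence an index-$2$ subgroup (a hyperplane of the $3$-dimensional $\mathbb{F}_2$-space $\mathbf{C}l_2(\kk)=\langle[\h],[\hh],[\hhh]\rangle$) generated by the classes of the primes splitting in $\KK_j$. One determines the Frobenius of $\h,\hh,\hhh$ (the primes above $\pi_1,\pi_2,\pi_3$) in each $\KK_j=\kk(\sqrt{\mu_j})$: for $\KK_1,\KK_2,\KK_3\subseteq\k$ this is governed by the absolutely abelian genus structure of Theorem~\ref{10:1}(5)--(6) together with \eqref{10:18} (the residual ambiguity among the three being removed by matching the abelianization $G_j/G_j'=\mathbf{C}l_2(\KK_j)$ against the structure of $\mathbf{C}l_2(\KK_j)$, obtained from the unit-index computations of \S\ \ref{s:Preliminaries}); for $\KK_4,\dots,\KK_7=\kk(\sqrt{\pi_a\pi_b})$ the decomposition is controlled by $\left(\frac{p_1}{p_2}\right)$ and by the quartic residue symbol $\pi=\left(\frac{\pi_1}{\pi_3}\right)$, the conjugacies $\KK_4\simeq\KK_7$ and $\KK_5\simeq\KK_6$ halving the work. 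This pins down all seven $N_j$, hence all seven $G_j$, and it is here that every case distinction on $\pi$ and on $\left(\frac{p_1}{p_2}\right)$ in part (1) originates.

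\textbf{Transfers, capitulation kernels, and condition $(A)$ for $\KK_j$.}
Each $G_j$ has index $2$ in $G$, so it is normal, and for a representative $t\notin G_j$ the transfer is $V_{G,G_j}(xG')=x\,(txt^{-1})\,G_j'$ if $x\in G_j$ and $V_{G,G_j}(xG')=x^2G_j'$ if $x\notin G_j$. Substituting the relations of $G$ from Theorem~\ref{10:2}(4)--(5) — following the two branches $N(\varepsilon_{p_1p_2})=\pm1$, and the two possibilities for $\rho^2$ inside the second — one evaluates $V_{G,G_j}$ and reads off $\ker V_{G,G_j}$ as a subgroup of $G/G'=\langle[\h],[\hh],[\hhh]\rangle$, obtaining $\lvert\kappa_{\KK_j/\kk}\rvert=4$ for $j\neq3$ and $\lvert\kappa_{\KK_3/\kk}\rvert=2$ with the generators stated in part (1)(i)--(ii), and one checks that these answers are insensitive to the remaining ambiguities in the presentation. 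Comparing each $\kappa_{\KK_j/\kk}$ with the list of $N_j$ yields the norm-group identifications. For Taussky's condition: when $j\neq3$, $\lvert\kappa_{\KK_j/\kk}\rvert=\lvert N_j\rvert=4$ inside $\mathbf{C}l_2(\kk)$ of order $8$, so $\lvert\kappa_{\KK_j/\kk}\cap N_j\rvert\ge 4\cdot4/8=2>1$ automatically; when $j=3$ the explicit kernels show the order-$2$ group $\kappa_{\KK_3/\kk}$ lies in $N_3$, so again $\kappa_{\KK_3/\kk}\cap N_3>1$. This proves parts (1) and (2).

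\textbf{The second layer and the main obstacle.}
Capitulation is monotone: for $\kk\subseteq F\subseteq F'\subseteq\L$ one has $J_{F'/\kk}=J_{F'/F}\circ J_{F/\kk}$, hence $\kappa_{F/\kk}\subseteq\kappa_{F'/\kk}$. By Theorem~\ref{10:1}(6) each $\LL_j$ is a compositum of three of the $\KK_i$, and inspection of the kernels found in part (1) shows in every case that the subgroup generated by the capitulation kernels of merely two of these three constituents is already all of $\mathbf{C}l_2(\kk)$; hence $\kappa_{\LL_j/\kk}=\mathbf{C}l_2(\kk)$, i.e. total $2$-capitulation (equivalently, the Artin transfer $V_{G,\mathcal G_j}$ vanishes). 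Finally, $\LL_j/\kk$ being unramified with $\mathrm{Gal}(\LL_j/\kk)\simeq(2,2)$, the norm group $N_{\LL_j/\kk}(\mathbf{C}l_2(\LL_j))=\mathrm{Gal}(\M/\LL_j)$ has index $4$ in $\mathbf{C}l_2(\kk)$, hence order $2>1$, so $\kappa_{\LL_j/\kk}\cap N_{\LL_j/\kk}(\mathbf{C}l_2(\LL_j))>1$ and each $\LL_j$ is of type $(A)$, proving part (3). The principal difficulty lies in the identification step: organizing the decomposition-law computations — in particular the quartic residue symbols distinguishing $\{\KK_4,\KK_7\}$ from $\{\KK_5,\KK_6\}$ — so as to fix all seven correspondences $\KK_j\leftrightarrow G_j$ consistently across the branches, and then carrying the transfer computation for $\KK_3$ through every branch of the presentation of $G$ to obtain the sharp value $\lvert\kappa_{\KK_3/\kk}\rvert=2$ and to decide, when $N(\varepsilon_{p_1p_2})=-1$, between $\langle[\h\hhh]\rangle$ and $\langle[\hh\hhh]\rangle$; once the dictionary is fixed, the rest is routine bookkeeping.
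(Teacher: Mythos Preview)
Your strategy is correct and coincides with the paper's: compute the norm class groups $N_j$ by residue-symbol calculations, translate via Artin to identify each $G_j\le G$, and then evaluate the index-$2$ transfers using the presentation of $G$ from Theorem~\ref{10:2}, with total capitulation in the $\LL_j$ following by monotonicity from the $\KK_i$ kernels. The only notable deviation is that the paper handles $\kappa_{\KK_3/\kk}$ \emph{arithmetically} (unit index $[E_\kk:N_{\KK_3/\kk}(E_{\KK_3})]=1$ and an explicit principalization of $\h\hh$, resp.\ $\h\hhh$ or $\hh\hhh$, via $\varepsilon_{p_1p_2}$) already in the course of proving Theorem~\ref{10:2}, rather than via the transfer $V_{G,G_3}$; also, your ``abelianization matching'' step is unnecessary, since the residue-symbol computation of the $N_j$ (as in the paper's Table~\ref{26}) pins down each $G_j$ directly.
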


\begin{them}\label{10:4}
Let $2^n=h(p_1p_2)$, $2^{m+1}=h(-p_1p_2)$, where $n\geq1$ and  $m\geq2$. Put $\beta=\left(\frac{1+i}{\pi_1}\right)\left(\frac{1+i}{\pi_3}\right)$.
\begin{enumerate}[\rm\indent(1)]
  \item The abelian type invariants of the $2$-class groups $\mathbf{C}l_2(\KK_j)$ are given by:
\begin{enumerate}[\rm\indent(i)]
\item $\mathbf{C}l_2(\KK_1)$ and  $\mathbf{C}l_2(\KK_2)$  are of type $(2, 2, 2)$ if  $\left(\frac{p_1}{p_2}\right)=1$ and of  type $(2, 4)$ otherwise.
\item If $\left(\frac{p_1}{p_2}\right)=1$,
       then $\mathbf{C}l_2(\KK_4)$,  $\mathbf{C}l_2(\KK_5)$, $\mathbf{C}l_2(\KK_6)$ and  $\mathbf{C}l_2(\KK_7)$ are of  type $(2, 2, 2)$ if $\left(\frac{\pi_1}{\pi_3}\right)=-1$, and of type $(2, 4)$ otherwise.
\item If $\left(\frac{p_1}{p_2}\right)=-1$, then we have:\\
       if $\left(\frac{\pi_1}{\pi_3}\right)=-1$, then  $\left\{\begin{array}{ll} \mathbf{C}l_2(\KK_4)\simeq\mathbf{C}l_2(\KK_7)\simeq(2, 4),\\  \mathbf{C}l_2(\KK_5)\simeq\mathbf{C}l_2(\KK_6)\simeq(2, 2, 2);\end{array}\right.$\\
       if $\left(\frac{\pi_1}{\pi_3}\right)=1$, then   $\left\{\begin{array}{ll} \mathbf{C}l_2(\KK_4)\simeq\mathbf{C}l_2(\KK_7)\simeq(2, 2, 2),\\  \mathbf{C}l_2(\KK_5)\simeq\mathbf{C}l_2(\KK_6)\simeq(2, 4).\end{array}\right.$
\end{enumerate}
  \item The abelian type invariants of the $2$-class groups   $\mathbf{C}l_2(\LL_j)$ are given by:
\begin{enumerate}[\rm\indent(i)]
\item $\mathbf{C}l_2(\LL_1)\simeq
   \left\{\begin{array}{ll}
\left(2^{\min(m,n)}, 2^{\max(m+1, n+1)}\right) & \text{ if } N(\varepsilon_{p_1p_2})=-1,\\
\left(2^{m}, 2^{n+1}\right) & \text{ if } N(\varepsilon_{p_1p_2})=1.
      \end{array}
    \right.$
\item If  $\left(\frac{p_1}{p_2}\right)=-1$ or $\left(\frac{p_1}{p_2}\right)=\left(\frac{\pi_1}{\pi_3}\right)=1$, then $\mathbf{C}l_2(\LL_2)$, $\mathbf{C}l_2(\LL_3)$, $\mathbf{C}l_2(\LL_4)$\\ and $\mathbf{C}l_2(\LL_5)$ are of type $(2, 4)$.\\
     If $\left(\frac{p_1}{p_2}\right)=-\left(\frac{\pi_1}{\pi_3}\right)=1$, then  $\mathbf{C}l_2(\LL_2)$, $\mathbf{C}l_2(\LL_3)$, $\mathbf{C}l_2(\LL_4)$\\ and $\mathbf{C}l_2(\LL_5)$ are of  type $(2, 2, 2)$.
\item If $\left(\frac{p_1}{p_2}\right)=1$, then $\mathbf{C}l_2(\LL_6)$ and $\mathbf{C}l_2(\LL_7)$ are of type $\left(2, 2^{n+2}\right)$  if  $(\frac{\pi_1}{\pi_3})=1$, otherwise we have:\\
$\mathbf{C}l_2(\LL_6)\simeq\left\{ \begin{array}{ll}
 \left(2^{m-1}, 2^{n+2}\right) & \text{ if }\beta=1,\\
  \left(2^{\min(m-1, n+1)}, 2^{\max(m, n+2)}\right)& \text{ if } \beta=-1,
   \end{array}\right.$ \\
   $\mathbf{C}l_2(\LL_7)\simeq\left\{ \begin{array}{ll}
\left(2^{\min(m-1, n+1)}, 2^{\max(m, n+2)}\right)  & \text{ if }\beta=1,\\
 \left(2^{m-1}, 2^{n+2}\right) & \text{ if } \beta=-1,
   \end{array}\right.$ \\
 If $\left(\frac{p_1}{p_2}\right)=-1$, then\\
  $\mathbf{C}l_2(\LL_6)\simeq\left\{ \begin{array}{ll}
 \left(2^{n+1}, 2^{m}\right) & \text{ if } (\frac{\pi_1}{\pi_3})=1,\\
  \left(2^{\min(m-1, n)}, 2^{\max(m+1, n+2)}\right)& \text{ if } (\frac{\pi_1}{\pi_3})=-1,
   \end{array}\right.$\\ and\\
  $\mathbf{C}l_2(\LL_7)\simeq\left\{ \begin{array}{ll}
  \left(2^{\min(m-1, n)}, 2^{\max(m+1, n+2)}\right) & \text{ if } (\frac{\pi_1}{\pi_3})=1,\\
 \left(2^{n+1}, 2^{m}\right)& \text{ if } (\frac{\pi_1}{\pi_3})=-1,
   \end{array}\right.$
 \end{enumerate}
\end{enumerate}
\end{them}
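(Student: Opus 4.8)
The plan is to derive Theorem~\ref{10:4} from the explicit presentation of $G=\mathrm{Gal}(\L/\kk)$ established in Theorem~\ref{10:2}(4), translated back into class field theory through Artin reciprocity. Since the $2$-class field tower of $\kk$ has length $2$ (Theorem~\ref{10:2}(3)), $\L$ is the maximal unramified pro-$2$ extension of $\kk$; being unramified over each intermediate field $\KK_j$ and $\LL_j$ as well, it contains all their Hilbert $2$-class fields. Writing $G_j=\mathrm{Gal}(\L/\KK_j)$ for the seven maximal (index-$2$) subgroups of $G$ and $\mathcal{G}_j=\mathrm{Gal}(\L/\LL_j)$ for the seven index-$4$ subgroups containing $G'$, Artin's isomorphism gives $\mathbf{C}l_2(\KK_j)\simeq G_j/G_j'$ and $\mathbf{C}l_2(\LL_j)\simeq\mathcal{G}_j/\mathcal{G}_j'$. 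Thus the theorem reduces to computing the abelianisations of these fourteen subgroups, once for the presentation valid when $N(\varepsilon_{p_1p_2})=-1$ and once for the one valid when $N(\varepsilon_{p_1p_2})=1$.

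First I would match subgroups with fields. By Theorem~\ref{10:2}(1) the Artin map identifies $\mathbf{C}l_2(\kk)=\langle[\mathcal{H}_1],[\mathcal{H}_2],[\mathcal{H}_3]\rangle$ with $G/G'=\langle\bar\rho,\bar\tau,\bar\sigma\rangle\simeq(\ZZ/2\ZZ)^3$. Using the radicands of the $\KK_j$ and the composita defining the $\LL_j$ from Theorem~\ref{10:1}, together with the splitting of $\mathcal{H}_1,\mathcal{H}_2,\mathcal{H}_3$ in these fields, one reads off for each $j$ the index-$2$ subgroup $G_j/G'$ of $\mathbf{C}l_2(\kk)$; it coincides with the norm group $N_j=N_{\KK_j/\kk}(\mathbf{C}l_2(\KK_j))$ appearing in Theorem~\ref{10:3}, which makes the identification explicit, and $\mathcal{G}_j/G'$ is then the intersection of two of the $G_j/G'$ dictated by the composita in Theorem~\ref{10:1}(6). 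For the absolutely non-normal fields $\KK_4,\dots,\KK_7$ and $\LL_2,\dots,\LL_7$ the symbols $\pi=\left(\frac{\pi_1}{\pi_3}\right)$ and $\beta=\left(\frac{1+i}{\pi_1}\right)\left(\frac{1+i}{\pi_3}\right)$ enter already here, since they decide how the Frobenius classes of $\mathcal{H}_1,\mathcal{H}_2,\mathcal{H}_3$ lie relative to these fields, hence which pair of generators of $G/G'$ spans the pertinent subgroup.

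Next I would abelianise. Fixing generators $x_j,y_j$ of $G$ whose images span $G_j/G'$, one has $G_j=\langle x_j,y_j,\sigma^2,\tau^2\rangle$ because $\Phi(G)=G'=\langle\sigma^2,\tau^2\rangle$; since $G'$ is abelian and normal (Theorem~\ref{10:2}(5)), $G_j'$ is the subgroup of $G'$ generated by $[x_j,y_j]$, $[x_j,\sigma^2]$, $[x_j,\tau^2]$, $[y_j,\sigma^2]$, $[y_j,\tau^2]$ and their $\langle x_j,y_j\rangle$-conjugates, each computed by iterating the power and commutator relations of $G$ (notably $[\tau,\sigma]=1$, $[\sigma,\rho]=\sigma^{2^m-2}$ or $[\rho,\sigma]=\sigma^2$, and $[\rho,\tau]=\tau^2$ or $[\tau,\rho]=\tau^{2^{n+1}-2}$, according to the sign of $N(\varepsilon_{p_1p_2})$). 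Reducing every element of $G_j$ to a normal form in $x_j,y_j,\sigma^2,\tau^2$ modulo $G_j'$ then exhibits $G_j/G_j'$ explicitly as a finite abelian $2$-group and yields its invariant factors; the same procedure, with one generator in place of two, handles each $\mathcal{G}_j$. For $\LL_1=\kk^{(*)}$ the subgroup $\mathcal{G}_1$ turns out to be abelian already (its commutator subgroup is trivial because $[\tau,\sigma]=1$), so $\mathbf{C}l_2(\LL_1)$ is obtained from a Smith-normal-form computation on the relations between the two generators of $\mathcal{G}_1$, producing the $(2^{\min},2^{\max})$ invariants of statement (2)(i), the two branches matching $N(\varepsilon_{p_1p_2})=\pm1$. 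In the ``$(2,2,2)$ versus $(2,4)$'' alternatives of (1)(i)--(iii) and (2)(ii), where both candidates have order $8$, only the $2$-rank is in question --- equivalently whether $[G_j:\Phi(G_j)]$, respectively $[\mathcal{G}_j:\Phi(\mathcal{G}_j)]$, equals $4$ or $8$ --- and the relations settle this at once; the finer sub-cases of (2)(iii) are separated by the coclass information of Theorem~\ref{10:2}(6).

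In parallel I would run arithmetic cross-checks for the absolutely abelian members $\KK_1=\QQ(\sqrt{p_1},\sqrt{p_2q},i)$, $\KK_2=\QQ(\sqrt{p_2},\sqrt{p_1q},i)$ and $\LL_1=\QQ(\sqrt{p_1},\sqrt{p_2},\sqrt{q},i)$: the $2$-ranks in (1)(i) follow from Chevalley's ambiguous class number formula for $\KK_j/\kk$ once the unit norm index $[E_\kk:E_\kk\cap N_{\KK_j/\kk}\KK_j^{\ast}]$ is pinned down (it is controlled by $\left(\frac{p_1}{p_2}\right)$), and the orders together with the full structure of $\mathbf{C}l_2(\LL_1)$ drop out of Kuroda's class number formula for these octic multiquadratic fields, using the fundamental systems of units and the indices $q(\,\cdot\,/\QQ)$ assembled in Section~\ref{s:Preliminaries}; these checks also fix the residual ambiguity in the matching of Step~1. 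The computation is elementary, but the bookkeeping is the real obstacle: one must run the case analysis indexed by the signs of $N(\varepsilon_{p_1p_2})$, $\left(\frac{p_1}{p_2}\right)$, $\pi$ and $\beta$ while keeping precise track of the $2$-adic valuations of the generators of $G_j'$ and $\mathcal{G}_j'$ inside $G'=\langle\sigma^2,\tau^2\rangle$, for it is these valuations that produce the $\min$/$\max$ expressions in the stated invariants. A further trap is that, because $\rho$ acts nontrivially on $G'$ (e.g. $[\sigma,\rho]=\sigma^{2^m-2}$), for a non-normal $\KK_j$ the subgroup $G_j'$ may come out larger or smaller than naive expectation, so the $2$-rank of $\mathbf{C}l_2(\KK_j)$ has to be computed rather than guessed; and the labelling of generators must be kept coherent across all fourteen fields so that these type invariants and the capitulation kernels of Theorem~\ref{10:3} refer to one and the same normalisation.
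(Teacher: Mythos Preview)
Your proposal is correct and follows essentially the same route as the paper: identify the maximal subgroups $G_j$ and the index-$4$ subgroups $\mathcal{G}_j$ of $G$ via the norm class groups $N_j$ (computed from quadratic residue symbols as in the paper's Table~\ref{26}), then abelianise using the commutator relations of the presentation in Theorem~\ref{10:2} (the paper records these in Proposition~\ref{24} and carries out the computations in Tables~\ref{10:23}--\ref{10:22}). One point worth sharpening: the symbol $\beta$ does not enter through the norm groups $N_j$ themselves but through Lemma~\ref{10:19}, which decides whether $\tau$ corresponds to $[\mathcal{H}_1\mathcal{H}_3]$ or $[\mathcal{H}_2\mathcal{H}_3]$ under the Artin map, hence which of two candidate subgroups is actually $G_j$; the paper resolves this ambiguity by that Hilbert-symbol computation rather than by the arithmetic cross-checks you propose, though your Chevalley/Kuroda verifications would serve the same purpose.
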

\section{Preliminary results}
\label{s:Preliminaries}
 Let  $p_1\equiv p_2\equiv -q \equiv 1 \pmod4$ be different primes  satisfying the conditions \eqref{10:18}. Put  $k_0=\QQ(\sqrt{p_1p_2q})$, $\overline{k}_0=\QQ(\sqrt{-p_1p_2q})$,  $k_1=\QQ(\sqrt{p_1p_2})$, $\overline{k}_1=\QQ(\sqrt{-p_1p_2})$,  $\varepsilon_{p_1p_2q}=x+y\sqrt{p_1p_2q}$ and $\varepsilon_{p_1p_2}=a+b\sqrt{p_1p_2}$. Let $\displaystyle\left(\frac{g, h}{p}\right)$ denote the quadratic Hilbert symbol for some prime  $p$.
 \begin{lem}\label{10:5}
 Keep the notations above. Then
 \begin{enumerate}[\rm\indent(1)]
 \item If $N(\varepsilon_{p_1p_2})=1$, then $2p_1(a\pm1)$ i.e. $2p_2(a\mp1)$ is a square in $\NN$.
 \item $p_1p_2(x\pm1)$ i.e. $q(x\mp1)$ is a square in $\NN$.
 \end{enumerate}
\end{lem}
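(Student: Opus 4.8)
The plan is to treat both parts by the same elementary device: write the norm-one relation of the relevant fundamental unit as a product of two integers coprime up to a power of $2$, and then track how the odd primes are forced to distribute between the two factors, using the minimality of the fundamental unit together with the conditions in \eqref{10:18} to discard the unwanted distributions.

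For part (1): since $p_1p_2\equiv1\pmod 8$, the fundamental unit lies in $\ZZ[\sqrt{p_1p_2}]$, so $a,b\in\ZZ$ and $N(\varepsilon_{p_1p_2})=1$ reads $(a-1)(a+1)=p_1p_2b^2$. First I would reduce modulo $4$: if $a$ were even the left-hand side would be $\equiv3$ and the right-hand side $\equiv1\pmod 4$; hence $a$ is odd, $\gcd(a-1,a+1)=2$, and writing $a-1=2A$, $a+1=2B$ (so $B=A+1$, $\gcd(A,B)=1$) and $b=2c$ gives $AB=p_1p_2c^2$. As $A$ and $B$ are coprime, each odd prime divides exactly one of them, to an even exponent unless it equals $p_1$ or $p_2$; so, up to interchanging $A$ and $B$, either $\{A,B\}=\{r^2,\,p_1p_2s^2\}$ or $\{A,B\}=\{p_1r^2,\,p_2s^2\}$. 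In the first case $B-A=1$ produces, according to which factor is the square, either a unit of norm $-1$ in $\QQ(\sqrt{p_1p_2})$ (against $N(\varepsilon_{p_1p_2})=1$), or a unit $\eta$ with $\eta^2=\varepsilon_{p_1p_2}$ — immediate from $A+B=a$ and $b^2=4AB/(p_1p_2)$ — against the fact that $\varepsilon_{p_1p_2}$ is fundamental. Hence the second case holds, and there $2p_1(a-1)=(2p_1r)^2$ and $2p_2(a+1)=(2p_2s)^2$ — or, with $A$ and $B$ interchanged, $2p_1(a+1)$ and $2p_2(a-1)$ — are squares, which is the assertion.

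For part (2): first $N(\varepsilon_{p_1p_2q})=1$, for $x^2-p_1p_2q\,y^2=-1$ would make $-1$ a square modulo the prime $q\equiv3\pmod 4$ dividing $p_1p_2q$; thus $(x-1)(x+1)=p_1p_2q\,y^2$. The crucial step is that $x$ is even. If $x$ were odd, the same reduction gives $AB=p_1p_2q\,c^2$ with $B=A+1$, $\gcd(A,B)=1$, and $p_1,p_2,q$ split between $A$ and $B$ in one of eight ways: the two splits putting all of $p_1,p_2,q$ on one side are discarded exactly as in part (1), while each of the other six is killed by reducing $B-A=1$ modulo $p_1$, $p_2$ or $q$ and using \eqref{10:18} — since $p_i\equiv1\pmod 4$, the value $\left(\frac{p_i}{q}\right)=-1$ forces $\left(\frac{q}{p_i}\right)=-1$, and one reaches either the impossible $\left(\frac{q}{p_i}\right)=1$, or the incompatibility $\left(\frac{p_1}{p_2}\right)=1$ together with $\left(\frac{p_1}{p_2}\right)=-1$ (the value $\left(\frac{2}{p_i}\right)=-1$ being used along the way). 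Hence $x$ is even, $\gcd(x-1,x+1)=1$, $y$ is odd, and $p_1,p_2,q$ split between $x-1$ and $x+1$: writing $x-\epsilon=d_1u^2$, $x+\epsilon=d_2v^2$ with $d_1d_2=p_1p_2q$ and $\epsilon\in\{1,-1\}$, the split $\{d_1,d_2\}=\{1,\,p_1p_2q\}$ is impossible because reducing modulo $p_1$ forces $\left(\frac{-2\epsilon}{p_1}\right)=1$, whereas $\left(\frac{-2\epsilon}{p_1}\right)=\left(\frac{2}{p_1}\right)=-1$; and $\{d_1,d_2\}=\{p_1,\,p_2q\}$ or $\{p_2,\,p_1q\}$ is impossible by the same two-sided reciprocity incompatibility on $\left(\frac{p_1}{p_2}\right)$. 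Only $\{d_1,d_2\}=\{q,\,p_1p_2\}$ survives, and there $q(x\mp1)$ and $p_1p_2(x\pm1)$ are squares, as asserted.

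The routine content is the parity and $2$-adic bookkeeping (which, since $B=A+1$ and $\gcd(x-1,x+1)=1$ when $x$ is even, keeps all the square cofactors automatically coprime to the moduli being used). The main obstacle is the organization of the prime-distribution case analysis in part (2) — the eight subcases needed to rule out $x$ odd, and then the three partitions to rule out once $x$ is even — together with keeping the quadratic-reciprocity signs consistent throughout, since the conditions \eqref{10:18} enter only through these residue computations and are precisely what forces the partition $\{q\}$ against $\{p_1,p_2\}$.
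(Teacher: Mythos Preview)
Your argument is correct. The overall plan---factor the norm--one relation $(a-1)(a+1)=p_1p_2b^2$ resp.\ $(x-1)(x+1)=p_1p_2q\,y^2$ and track how the ramified primes split between the two factors---is the same as the paper's. The execution differs in two respects worth noting.

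First, the paper does not carry out the factorization from scratch: it quotes \cite[Lemma~5]{Az-00} to reduce part~(1) directly to three cases ($a\pm1$, $p_1(a\pm1)$, or $2p_1(a\pm1)$ a square) and then kills the first two with the Legendre--symbol conditions \eqref{10:18} and the fact that $N(\varepsilon_{p_1p_2})=1$ forces $\bigl(\tfrac{p_1}{p_2}\bigr)=1$. Your treatment of (1) is more self-contained and, interestingly, does \emph{not} use \eqref{10:18} at all: after the parity reduction you eliminate the split $\{1,p_1p_2\}$ purely by the minimality of $\varepsilon_{p_1p_2}$ and the hypothesis $N=1$. This is a genuine simplification.

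Second, for part~(2) the paper is content with ``proceeding as in (1)'', whereas you spell out the full case analysis, including the parity of $x$. Your argument there is sound; one small cosmetic point is that the parenthetical ``the value $\bigl(\tfrac{2}{p_i}\bigr)=-1$ being used along the way'' is not actually needed in the $x$--odd subcase (the $2$-adic valuation of the even factor among $A,B$ is automatically even, and the six nontrivial splits are killed using only $\bigl(\tfrac{q}{p_i}\bigr)=-1$ and the $\bigl(\tfrac{p_1}{p_2}\bigr)$ incompatibility). The condition $\bigl(\tfrac{2}{p_i}\bigr)=-1$ is genuinely used only where you say it is, namely to kill the split $\{1,p_1p_2q\}$ once $x$ is known to be even.
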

\begin{proof}
(1) If  $N(\varepsilon_{p_1p_2})=1$, then $\left(\frac{p_1}{p_2}\right)=1$ and $a^2-1=b^2p_1p_2$. Therefore, according to
\cite[Lemma 5, p. 386]{Az-00}  and the decomposition uniqueness in $\ZZ$, there are three possible cases: $a\pm1$ or $p_1(a\pm1)$ or $2p_1(a\pm1)$ is a square in $\NN$:\\
(a) If  $a\pm1$ is a square in $\NN$, then $\left(\frac{2}{p_1}\right)=-1$, which is false.\\
(b) If $p_1(a\pm1)$ is a square in $\NN$, then $\left(\frac{p_1}{p_2}\right)=\left(\frac{2}{p_1}\right)=-1$, which is false. Thus the result.

 (2) As $N(\varepsilon_{p_1p_2q})=1$, then $x^2-1=y^2p_1p_2q$. Proceeding as in (1) we get that the only possible case is:
 $\left\{
 \begin{array}{ll}
 x\pm1=p_1p_2y_2^2,\\
 x\mp1=qy_1^2,
 \end{array}\right.$ hence the result.
\end{proof}
 \begin{lem}\label{10:7}
 Let  $p_1\equiv p_2\equiv -q \equiv 1 \pmod4$ be different primes  satisfying the conditions \eqref{10:18}.
Put  $\kk=\QQ(\sqrt{p_1p_2q}, i)$, $\KK_3^+=\QQ(\sqrt{q}, \sqrt{p_1p_2})$,  $\KK_3=\QQ(\sqrt{q}, \sqrt{p_1p_2}, i)$ and $F=\QQ(\sqrt{-q}, \sqrt{-p_1p_2})$. Then
 \begin{enumerate}[\rm\indent(1)]
 \item $\{\varepsilon_{p_1p_2q}\}$ is a $\mathrm{FSU}$ of both $\kk$ and $F$.
   \item  The $\mathrm{FSU}$'s of $\KK_3^+$ and $\KK_3$ are   $\{\varepsilon_q, \varepsilon_{p_1p_2}, \sqrt{\varepsilon_q\varepsilon_{p_1p_2q}}\}$ and $\{\varepsilon_{p_1p_2}, \sqrt{\varepsilon_q\varepsilon_{p_1p_2q}}, \sqrt{i\varepsilon_q} \}$ respectively.
   \item $q(F/\QQ)=1$, $q(\KK_3^+/\QQ)=2$ and $q(\KK_3/\QQ)=4$.
   \item  $h(F)=2h(-p_1p_2)$, $h(\KK_3^+)=2h(p_1p_2)$ and $h(\KK_3)=2h(p_1p_2)h(-p_1p_2)$.
 \end{enumerate}
\end{lem}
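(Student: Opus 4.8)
Below is a plan for proving Lemma \ref{10:7}.

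The plan is to compute the unit group of each of the four fields, climbing up from the real quadratic subfields, and then to obtain the $2$-class numbers from Kuroda's class number formula, fed by the unit indices so found. Three square decompositions do most of the work. First, $q\equiv3\pmod4$ prime forces $N(\varepsilon_q)=+1$, and writing $\varepsilon_q=s+t\sqrt q$ the integer $s$ must be even (otherwise $\varepsilon_q$ turns out to be the square of a unit, contradicting fundamentality), so $\{s-1,s+1\}=\{b^2,qa^2\}$ for nonzero $a,b$ and $2\varepsilon_q=(b+a\sqrt q)^2$. Second, $N(\varepsilon_{p_1p_2q})=+1$ (because $q\equiv3\pmod4$ divides $p_1p_2q$), and then the statement of Lemma \ref{10:5}(2) says precisely that $\sqrt{x+1}$ and $\sqrt{x-1}$ lie, in one order or the other, in $\QQ(\sqrt q)$ and $\QQ(\sqrt{p_1p_2})$, so $2\varepsilon_{p_1p_2q}=(y_2\sqrt{p_1p_2}+y_1\sqrt q)^2$ for nonzero $y_1,y_2$. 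Third, when $N(\varepsilon_{p_1p_2})=1$, Lemma \ref{10:5}(1) gives in the same way $\varepsilon_{p_1p_2}=(c_1\sqrt{p_1}+c_2\sqrt{p_2})^2$ with $c_1,c_2\neq0$. Note that $\varepsilon_q$ and $\varepsilon_{p_1p_2q}$ are totally positive.

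For the CM quartic fields $\kk$ and $F$: both have maximal real subfield $k_0=\QQ(\sqrt{p_1p_2q})$ with $E_{k_0}=\langle-1,\varepsilon_{p_1p_2q}\rangle$ and unit rank $1$, so $E=W\langle\varepsilon_{p_1p_2q}\rangle$ up to the Hasse unit index $Q\in\{1,2\}$, and $\{\varepsilon_{p_1p_2q}\}$ is a FSU exactly when $Q=1$. To prove $Q=1$ I would check, via Kummer theory over $k_0$, that for $F=k_0(\sqrt{-q})$ none of $\pm\varepsilon_{p_1p_2q}$, $\pm q\,\varepsilon_{p_1p_2q}$ is a square in $k_0$, and for $\kk=k_0(i)$ that $\pm\varepsilon_{p_1p_2q}$ is not a square in $k_0$ nor $\pm i\,\varepsilon_{p_1p_2q}$ a square in $\kk$. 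Fundamentality of $\varepsilon_{p_1p_2q}$ and total positivity kill every case except $q\,\varepsilon_{p_1p_2q}$ (resp. $\pm i\,\varepsilon_{p_1p_2q}$), and expanding a hypothetical square root of the latter forces $x+1$ or $x-1$ to be a perfect square, contradicting Lemma \ref{10:5}(2) together with \eqref{10:18}. (If $q=3$, one disposes of the extra roots of unity of $F$ in the same way.) This establishes (1), and then $q(F/\QQ)=[E_F:W_FE_{k_0}]=Q_F=1$, the first half of (3).

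For the totally real biquadratic field $\KK_3^+$ (unit rank $3$): by Kubota's classification of units of real biquadratic fields, $E_{\KK_3^+}/\langle-1,\varepsilon_q,\varepsilon_{p_1p_2},\varepsilon_{p_1p_2q}\rangle$ is elementary abelian of order $q(\KK_3^+/\QQ)\in\{1,2,4,8\}$, determined by which products of $\varepsilon_q,\varepsilon_{p_1p_2},\varepsilon_{p_1p_2q}$ are squares in $\KK_3^+$. Multiplying the first two decompositions gives $\sqrt{\varepsilon_q\varepsilon_{p_1p_2q}}=\tfrac12(b+a\sqrt q)(y_2\sqrt{p_1p_2}+y_1\sqrt q)\in\KK_3^+$, so the index is $\ge2$; it equals $2$ once one rules out that $\varepsilon_q$, $\varepsilon_{p_1p_2}$ or $\varepsilon_q\varepsilon_{p_1p_2}$ is a square in $\KK_3^+$. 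For $\varepsilon_q$: $\KK_3^+(\sqrt{\varepsilon_q})=\KK_3^+(\sqrt2)\neq\KK_3^+$, since $2\varepsilon_q$ is a square in $\KK_3^+$ while $\sqrt2\notin\KK_3^+$. For $\varepsilon_{p_1p_2}$ and $\varepsilon_q\varepsilon_{p_1p_2}$: if $N(\varepsilon_{p_1p_2})=-1$ they are not totally positive, hence not squares in a real field; and if $N(\varepsilon_{p_1p_2})=1$, then $\sqrt{\varepsilon_{p_1p_2}}=c_1\sqrt{p_1}+c_2\sqrt{p_2}$ generates $\QQ(\sqrt{p_1},\sqrt{p_2})\not\subseteq\KK_3^+$, while $\sqrt{\varepsilon_q\varepsilon_{p_1p_2}}\in\KK_3^+$ is equivalent, on multiplying by $\sqrt{\varepsilon_q\varepsilon_{p_1p_2q}}\in\KK_3^+$, to $\sqrt{\varepsilon_{p_1p_2}\varepsilon_{p_1p_2q}}=\tfrac{1}{\sqrt2}(c_1\sqrt{p_1}+c_2\sqrt{p_2})(y_2\sqrt{p_1p_2}+y_1\sqrt q)\in\KK_3^+$, which is impossible since the numerator is a $\QQ$-combination of $\sqrt{p_1},\sqrt{p_2},\sqrt{p_1q},\sqrt{p_2q}$ with nonzero coefficients. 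So $q(\KK_3^+/\QQ)=2$ and $\{\varepsilon_q,\varepsilon_{p_1p_2},\sqrt{\varepsilon_q\varepsilon_{p_1p_2q}}\}$ is a FSU of $\KK_3^+$. For the octic CM field $\KK_3=\KK_3^+(i)$ (unit rank $3$, $W_{\KK_3}=\langle i\rangle$, or $\langle\zeta_{12}\rangle$ if $q=3$), one has $E_{\KK_3}=W_{\KK_3}E_{\KK_3^+}$ up to a Hasse index equal to $2$: indeed $i\,\varepsilon_q=\bigl(\tfrac12(1+i)(b+a\sqrt q)\bigr)^2$, so $\sqrt{i\varepsilon_q}\in\QQ(i,\sqrt q)\subseteq\KK_3$, and $\sqrt{i\varepsilon_q}\notin W_{\KK_3}E_{\KK_3^+}$ because $\varepsilon_q$ is a basis element of the free part of $E_{\KK_3^+}$, hence not a square modulo roots of unity. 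Hence $\{\varepsilon_{p_1p_2},\sqrt{\varepsilon_q\varepsilon_{p_1p_2q}},\sqrt{i\varepsilon_q}\}$ is a FSU of $\KK_3$ (as $\varepsilon_q=-i(\sqrt{i\varepsilon_q})^2$ is redundant), proving (2); and comparing with the product of the unit groups of the quadratic subfields yields $q(\KK_3^+/\QQ)=2$ and $q(\KK_3/\QQ)=4$, completing (3).

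Finally, part (4) follows from Kuroda's class number formula, using $h(q)=h(-q)=1$ (a single ramified prime) and $h(p_1p_2q)=h(-p_1p_2q)=4$ (Theorem \ref{10:1}(1)): for the imaginary biquadratic field $F$, $h(F)=\tfrac12\,q(F/\QQ)\,h(-q)\,h(-p_1p_2)\,h(p_1p_2q)=2h(-p_1p_2)$; for the real biquadratic field $\KK_3^+$, $h(\KK_3^+)=\tfrac14\,q(\KK_3^+/\QQ)\,h(q)\,h(p_1p_2)\,h(p_1p_2q)=2h(p_1p_2)$; and for the octic field $\KK_3$, whose seven quadratic subfields have $2$-class numbers $1,1,1,4,4,h(p_1p_2),h(-p_1p_2)$, the degree-$8$ version of Kuroda's formula with $q(\KK_3/\QQ)=4$ gives $h(\KK_3)=2h(p_1p_2)h(-p_1p_2)$. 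The main obstacle is the unit index $q(\KK_3^+/\QQ)$, i.e.\ excluding the values $4$ and $8$: this forces the case distinction on the sign of $N(\varepsilon_{p_1p_2})$ and the careful use of the explicit square decompositions, Lemma \ref{10:5}, and \eqref{10:18}. The remaining steps are routine Kummer theory, Hasse-unit-index bookkeeping, and a direct appeal to Kuroda's formula.
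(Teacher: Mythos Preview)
Your proposal is correct and follows essentially the same route as the paper: determine the FSU of each field by deciding which products of the three real fundamental units become squares (using the explicit decompositions $2\varepsilon_q=(b+a\sqrt q)^2$, $2\varepsilon_{p_1p_2q}=(y_1\sqrt q+y_2\sqrt{p_1p_2})^2$, and, when $N(\varepsilon_{p_1p_2})=1$, the analogue from Lemma~\ref{10:5}(1)), read off the unit indices, and then feed these into Kuroda's class number formula together with $h(\pm p_1p_2q)=4$ and $h(\pm q)$ odd. The only difference is cosmetic: where the paper invokes \cite{Az-02} and \cite{Az-05} for the square criteria and for passing from $\KK_3^+$ to $\KK_3$, you spell out the Kummer/Hasse-index arguments directly.
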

\begin{proof}
Determine first the  $\mathrm{FSU}$ of $\KK^+_3$. If  $N(\varepsilon_{p_1p_2})=-1$, then only $\varepsilon_q$, $\varepsilon_{p_1p_2q}$ and $\varepsilon_q\varepsilon_{p_1p_2q}$ can be squares in  $\KK^+_3$.\\
 \indent According to  \cite[Lemma 3, p. 2199]{Az-02},  we get $\varepsilon_q$ is not a square in $\QQ(\sqrt q)$; but  $2\varepsilon_q$ is. On the other hand, from Lemma  \ref{10:5}, we have $p_1p_2(x\pm1)$ is a square in $\NN$, thus $\sqrt{2\varepsilon_{p_1p_2q}}=y_1\sqrt q+y_2\sqrt{p_1p_2}$, with some $y_j\in\ZZ$.  This yields that  $\varepsilon_{p_1p_2q}$ and $2\varepsilon_{p_1p_2q}$ are not squares in $\QQ(\sqrt{p_1p_2q})$; and $2\varepsilon_{p_1p_2q}$ is in $\KK^+_3$.  Therefore  $\varepsilon_q\varepsilon_{p_1p_2q}$ is a square in  $\KK^+_3$, which implies that  $\{\varepsilon_q, \varepsilon_{p_1p_2}, \sqrt{\varepsilon_q\varepsilon_{p_1p_2q}}\}$ is a  $\mathrm{FSU}$ of $\KK^+_3$, and thus $q(\KK_3^+/\QQ)=2$.   Moreover, as  $2\varepsilon_q$ is a square in  $\QQ(\sqrt q)$, then \cite[proposition 3, p. 112]{Az-05} implies that  $\{\varepsilon_{p_1p_2}, \sqrt{\varepsilon_q\varepsilon_{p_1p_2q}}, \sqrt{i\varepsilon_q}\}$ is a  $\mathrm{FSU}$ of
 $\KK_3$, and thus  $q(\KK_3/\QQ)=4$. We find the same results if we assume that $N(\varepsilon_{p_1p_2})=1$.\\
\indent We know, from Lemma \ref{10:5}, that $x\pm1$ is not a square in $\NN$; hence \cite[Applications 1 (i), p. 114]{Az-05} implies that  $\{\varepsilon_{p_1p_2q}\}$ is a  $\mathrm{FSU}$ of $\kk$.\\
\indent For the field  $F=\QQ(\sqrt{p_1p_2q}, \sqrt{-q})$, we know, according to  \cite[Applications 2, p. 114]{Az-05},  that $\{\sqrt{-\varepsilon_{p_1p_2q}}\}$ is a  $\mathrm{FSU}$ of  $F$ if and only if  $2q(x\pm1)$ i.e. $2p_1p_2(x\pm1)$  is a square in $\NN$, which is not the case (Lemma \ref{10:5}). Hence   $\{\varepsilon_{p_1p_2q}\}$ is a  $\mathrm{FSU}$ of  $F$, and thus  $q(F/\QQ)=1$.\\
\indent Finally,  under our assumptions, P. Kaplan states in \cite{Ka76} that  $h(p_1p_2q)=h(-p_1p_2q)=4$.  Therefore, the number class formula  implies that $h(\KK_3^+)=2h(p_1p_2)$,  $h(F)=2h(-p_1p_2)$ and   $h(\KK_3)=2h(p_1p_2)h(-p_1p_2).$
\end{proof}
  \begin{lem}\label{10:9}
If $\left(\frac{p_1}{p_2}\right)=1$, then $\left(\frac{p_1}{p_2}\right)_4\left(\frac{p_2}{p_1}\right)_4=\left(\frac{\pi_1}{\pi_3}\right)$.
 \end{lem}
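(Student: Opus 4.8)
The plan is to deduce the identity from the quartic reciprocity law in $\ZZ[i]$; once one passes to Gaussian integers, everything else is the multiplicativity and the complex–conjugation symmetry of power residue symbols. First I would record the two observations that keep all units under control. Condition \eqref{10:18} forces $p_1\equiv p_2\equiv 5\pmod 8$, so that $\tfrac{p_1-1}{4}$ and $\tfrac{p_2-1}{4}$ are odd and the real parts $e,g$ of $\pi_1=e+2if$ and $\pi_3=g+2ih$ are odd. Moreover, by the rational quadratic reciprocity law $\left(\frac{p_2}{p_1}\right)=\left(\frac{p_1}{p_2}\right)=1$, so $p_1^{(p_2-1)/4}\equiv\pm1\pmod{p_2}$ and $p_2^{(p_1-1)/4}\equiv\pm1\pmod{p_1}$; these signs are by definition $\left(\frac{p_1}{p_2}\right)_4$ and $\left(\frac{p_2}{p_1}\right)_4$.

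Next I pass to $\ZZ[i]$. For an odd prime $\pi$ of $\ZZ[i]$ write $\left[\frac{\alpha}{\pi}\right]$ for the biquadratic residue symbol, characterised by $\left[\frac{\alpha}{\pi}\right]\equiv\alpha^{(N\pi-1)/4}\pmod\pi$ and $\left[\frac{\alpha}{\pi}\right]^4=1$; it depends only on the ideal $(\pi)$, is multiplicative in $\alpha$, and satisfies $\left[\frac{\alpha}{\pi}\right]^{2}=\left(\frac{\alpha}{\pi}\right)$ (the quadratic symbol) as well as $\left[\frac{\overline{\alpha}}{\overline{\pi}}\right]=\overline{\left[\frac{\alpha}{\pi}\right]}$. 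Since every odd prime of $\ZZ[i]$ has norm $\ge 5$, the four fourth roots of unity are pairwise incongruent modulo it, so the $\pm1$--values above may be read off in $\ZZ[i]$: $\left[\frac{p_1}{\pi_3}\right]=\left(\frac{p_1}{p_2}\right)_4$ and $\left[\frac{p_2}{\pi_1}\right]=\left(\frac{p_2}{p_1}\right)_4$. I then choose primary associates $\pi_j^{\ast}$ of the $\pi_j$ (complex conjugation preserves primarity, hence $\pi_2^{\ast}=\overline{\pi_1^{\ast}}$ and $\pi_4^{\ast}=\overline{\pi_3^{\ast}}$), and invoke quartic reciprocity: because the two exponents are odd,
\[
\left[\frac{\pi_1^{\ast}}{\pi_3^{\ast}}\right]=-\left[\frac{\pi_3^{\ast}}{\pi_1^{\ast}}\right],\qquad
\left[\frac{\pi_1^{\ast}}{\pi_4^{\ast}}\right]=-\left[\frac{\pi_4^{\ast}}{\pi_1^{\ast}}\right].
\]

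Now I combine everything. Using $p_1=\pi_1^{\ast}\pi_2^{\ast}$, $p_2=\pi_3^{\ast}\pi_4^{\ast}$, multiplicativity, the conjugation symmetry, and the two reciprocities (whose signs cancel in pairs):
\[
\left(\frac{p_1}{p_2}\right)_4=\left[\frac{\pi_1^{\ast}}{\pi_3^{\ast}}\right]\left[\frac{\pi_2^{\ast}}{\pi_3^{\ast}}\right]
=\left[\frac{\pi_1^{\ast}}{\pi_3^{\ast}}\right]\overline{\left[\frac{\pi_1^{\ast}}{\pi_4^{\ast}}\right]},\qquad
\left(\frac{p_2}{p_1}\right)_4=\left[\frac{\pi_3^{\ast}}{\pi_1^{\ast}}\right]\left[\frac{\pi_4^{\ast}}{\pi_1^{\ast}}\right]
=\left[\frac{\pi_1^{\ast}}{\pi_3^{\ast}}\right]\left[\frac{\pi_1^{\ast}}{\pi_4^{\ast}}\right],
\]
so the product telescopes: $\left(\frac{p_1}{p_2}\right)_4\left(\frac{p_2}{p_1}\right)_4=\left[\frac{\pi_1^{\ast}}{\pi_3^{\ast}}\right]^{2}\bigl|\left[\frac{\pi_1^{\ast}}{\pi_4^{\ast}}\right]\bigr|^{2}=\left(\frac{\pi_1^{\ast}}{\pi_3^{\ast}}\right)$. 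It remains to replace $\pi_1^{\ast},\pi_3^{\ast}$ by $\pi_1,\pi_3$: the quadratic symbol depends only on the ideal $(\pi_3)$, and the primary associate of $\pi_1=e+2if$ must have the odd integer $e$ as its real part, hence $\pi_1^{\ast}=\pm\pi_1$; since $p_2\equiv1\pmod4$ gives $\left(\frac{-1}{\pi_3}\right)=1$, we conclude $\left(\frac{\pi_1^{\ast}}{\pi_3^{\ast}}\right)=\left(\frac{\pi_1}{\pi_3}\right)$, which is the assertion.

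The step I expect to be the real obstacle is precisely this unit bookkeeping: passing to primary associates is unavoidable for applying quartic reciprocity, and one has to check that it does not corrupt the final quadratic symbol. This is where it is essential that $e$ is odd, so that $\pi_1^{\ast}\in\{\pm\pi_1\}$ and not $\{\pm i\pi_1\}$ (otherwise the factor $\left(\frac{i}{\pi_3}\right)=-1$, a consequence of $p_2\equiv5\pmod8$, would spoil the equality), and that $p_2\equiv1\pmod4$, which kills $\left(\frac{-1}{\pi_3}\right)$. Everything else is routine manipulation of residue symbols. Alternatively, the same formula can be extracted from the rational biquadratic reciprocity laws of Burde/Scholz type, but the computation inside $\ZZ[i]$ sketched above is self-contained and fits the Gaussian setting used throughout the paper.
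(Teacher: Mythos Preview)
Your argument is correct and self-contained; the unit bookkeeping you flagged as the delicate point is handled accurately (primarity forces $\pi_1^\ast\in\{\pm\pi_1\}$ because $e$ is odd, and $\left(\frac{-1}{\pi_3}\right)=1$ since $p_2\equiv1\pmod4$).

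It is, however, a genuinely different route from the paper's. The paper does not work inside $\ZZ[i]$ at all: it quotes Kaplan \cite{Ka76} for the Burde-type rational identity $\left(\frac{p_1}{p_2}\right)_4\left(\frac{p_2}{p_1}\right)_4=\left(\frac{p_1}{ac+bd}\right)$ (with $p_1=a^2+b^2$, $p_2=c^2+d^2$), and then quotes Lemmermeyer \cite{Lm00} for $\left(\frac{p_1}{ac+bd}\right)=\left(\frac{\pi_1}{\pi_3}\right)$. So the paper's proof is two citations long, with the substance outsourced to rational biquadratic reciprocity; your proof derives the same conclusion from Gaussian quartic reciprocity plus the conjugation symmetry, at the cost of the primary-associate computation. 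Your approach has the advantage of being internal to the $\ZZ[i]$ framework already used for the $\pi_j$ throughout the paper, and of not depending on locating the exact statements in \cite{Ka76} and \cite{Lm00}; the paper's approach is shorter on the page and connects the lemma to the classical Burde--Scholz circle of results you yourself mention at the end.
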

 \begin{proof}
 From \cite{Ka76} we get $\left(\frac{p_1}{p_2}\right)_4\left(\frac{p_2}{p_1}\right)_4=\left(\frac{p_1}{ac+bd}\right)$, where $p_1=a^2+b^2$ and $p_2=c^2+d^2$; on the other hand, according to  \cite{Lm00} we have  $\left(\frac{p_1}{ac+bd}\right)=\left(\frac{\pi_1}{\pi_3}\right)$, which implies the result.
 \end{proof}
The following results are deduced from \cite{Lm94}.
  \begin{them}\label{10:10}
  Let $p_1\equiv p_2\equiv5 \pmod8$ be different primes and put  $F_1=\QQ(\sqrt{p_1p_2}, i)$.
  \begin{enumerate}[\rm\indent(1)]
    \item   $\mathbf{C}l_2(\overline{k}_1)$ is of type $(2, 2^m)$,  $m\geq2$. It is generated  by  $\mathfrak{2}=(2, 1+\sqrt{-p_1p_2})$, the prime ideal of  $\overline{k}_1$ above $2$, and an ideal  $I$ of $\overline{k}_1$ of order $2^m$. Moreover
      \begin{center} $\left\{
       \begin{array}{ll}
       I^{2^{m-1}}\sim\mathfrak{p}_1 \text{ if } \left(\dfrac{p_1}{p_2}\right)=1,\\
       I^{2^{m-1}}\sim\mathfrak{2}\mathfrak{p}_1 \text{ if } \left(\dfrac{p_1}{p_2}\right)=-1;
       \end{array}\right.$\end{center}
        where $\mathfrak{p}_1=(p_1, \sqrt{-p_1p_2})$ is the prime ideal of $\overline{k}_1$ above $p_1$.
    \item  $\mathbf{C}l_2(k_1)$ is of type $(2^n)$,  $n\geq1$, and it is generated by $\mathfrak{2}_1$, a prime ideal of $k_1$ above $2$.
    \item  $\mathbf{C}l_2(F_1)$ is of $2$-rank equal to $2$. It is generated by $I$ and $\mathfrak{2}_{F_1}$, where  $\mathfrak{2}_{F_1}$ is a prime ideal of $F_1$ above $2$.
    \item If $\left(\frac{p_1}{p_2}\right)=-1$, then $\mathbf{C}l_2(F_1)\simeq(2^{n}, 2^{m})$; and, in $\mathbf{C}l_2(F_1)$, $I^{2^{m-1}}\sim \mathfrak{2}_{F_1}^{2^n}\sim \mathfrak{p}_1\not\sim1$.
    \item If $\left(\frac{p_1}{p_2}\right)=1$ and $N(\varepsilon_{p_1p_2})=-1$, then $$\mathbf{C}l_2(F_1)\simeq(2^{\min(n, m-1)}, 2^{\max(m-1, n+1)})$$ and  $I^{2^{m-1}}\sim \mathfrak{2}_{F_1}^{2^n}\sim \mathfrak{p}_1\not\sim1$.
      \item If  $\left(\frac{p_1}{p_2}\right)=1$ and $N(\varepsilon_{p_1p_2})=1$, then $\mathbf{C}l_2(F_1)\simeq(2^{n+1}, 2^{m-1})$; moreover  $I^{2^{m-1}}\sim \mathfrak{2}_{F_1}^{2^{n+1}}\sim \mathfrak{p}_1\sim1$.
   \end{enumerate}
 \end{them}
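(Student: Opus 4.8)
The statement bundles the $2$-class group structures of the real quadratic field $k_1=\QQ(\sqrt{p_1p_2})$, the imaginary quadratic field $\overline{k}_1=\QQ(\sqrt{-p_1p_2})$ and the imaginary biquadratic field $F_1=\QQ(\sqrt{p_1p_2},i)$, all under the single hypothesis $p_1\equiv p_2\equiv5\pmod8$, and my plan is to treat it in three stages. For the two quadratic fields (parts (1), (2)): since $\mathrm{disc}(k_1)=p_1p_2$ has two prime divisors, Gauss genus theory makes $\mathbf{C}l_2(k_1)$ cyclic, say of order $2^n$, and to see it is generated by the prime $\mathfrak{2}_1$ above $2$ it suffices, cyclicity being known, to check $[\mathfrak{2}_1]\notin\mathbf{C}l_2(k_1)^2$, i.e. that $\mathfrak{2}_1$ does not split in the genus field $\QQ(\sqrt{p_1},\sqrt{p_2})$; this holds because $2$ splits in $k_1$ (as $\left(\frac{2}{p_1}\right)\left(\frac{2}{p_2}\right)=1$) while $\sqrt{p_1}\notin\QQ_2$ (as $p_1\equiv5\pmod8$). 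Since $\mathrm{disc}(\overline{k}_1)=-4p_1p_2$ has three prime divisors, the genus rank of $\overline{k}_1$ is $2$; the refined structure $\mathbf{C}l_2(\overline{k}_1)\simeq(2,2^m)$ with $m\geq2$, in which one R\'edei invariant is exactly $2$ because $\left(\frac{2}{p_1}\right)=\left(\frac{2}{p_2}\right)=-1$, is the content of the cited results of \cite{Lm94} together with classical R\'edei--Reichardt theory. The prime $\mathfrak{2}$ above $2$ has order $2$ and is not a square (the local completion $(\overline{k}_1)_{\mathfrak{2}}$ equals $\QQ_2(i)$, and $\mathfrak{2}$ is inert in the quadratic extension $\overline{k}_1(\sqrt{p_1})$, which is unramified, again because $p_1\equiv5\pmod8$), so $\langle[\mathfrak{2}]\rangle$ splits off as a direct summand and one may choose $I$ generating a complementary cyclic summand of order $2^m$. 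Finally, $(\sqrt{-p_1p_2})=\mathfrak{p}_1\mathfrak{p}_2$ gives $[\mathfrak{p}_1]=[\mathfrak{p}_2]$, so the ambiguous classes are $\langle[\mathfrak{2}],[\mathfrak{p}_1]\rangle$; since $I^{2^{m-1}}$ is the unique element of order $2$ lying in $\mathbf{C}l_2(\overline{k}_1)^2$, it equals $[\mathfrak{p}_1]$ exactly when $[\mathfrak{p}_1]$ is a square, and a short Artin-map computation in $\mathrm{Gal}(\overline{k}_1(\sqrt{-1},\sqrt{p_1})/\overline{k}_1)$ shows this happens iff $\mathfrak{p}_1$ splits in $\overline{k}_1(\sqrt{p_1})$, i.e. iff $\left(\frac{p_1}{p_2}\right)=1$ --- precisely the dichotomy asserted in (1).

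For the biquadratic field (part (3)), a local analysis at $2$, $p_1$, $p_2$ --- using $p_1\equiv p_2\equiv5\pmod8$, so that $\sqrt{-1}$ lies in each of the completions of $\overline{k}_1$ at these primes --- shows that $F_1=\overline{k}_1(\sqrt{-1})=k_1(i)$ is an unramified quadratic extension of $\overline{k}_1$ contained in its genus field. I would obtain $\lvert\mathbf{C}l_2(F_1)\rvert$ from Kuroda's class number formula of \cite{Lm94} for the $V_4$-field $F_1$ (where $h(\QQ(i))=1$), whose unit index $q(F_1/\QQ)$ is computed from a fundamental system of units by the techniques of \S\ \ref{s:Preliminaries}, and I would read off that the $2$-rank equals $2$ from the standard genus theory of $F_1/\QQ$ (equivalently, from the ambiguous class number formula for the unramified extension $F_1/\overline{k}_1$), with $I$ (pushed up from $\overline{k}_1$) and the prime $\mathfrak{2}_{F_1}$ above $2$ among a system of generators.

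For parts (4)--(6), with the $2$-rank equal to $2$ now available, the abelian type of $\mathbf{C}l_2(F_1)$ in each of the three cases --- $\left(\frac{p_1}{p_2}\right)=-1$; $\left(\frac{p_1}{p_2}\right)=1$ with $N(\varepsilon_{p_1p_2})=-1$; $\left(\frac{p_1}{p_2}\right)=1$ with $N(\varepsilon_{p_1p_2})=1$ --- is pinned down by matching $\lvert\mathbf{C}l_2(F_1)\rvert$ against the orders of the images of the two generators $I$ and $\mathfrak{2}_{F_1}$. Those orders, together with the asserted relations between $I^{2^{m-1}}$, a power of $\mathfrak{2}_{F_1}$, and $\mathfrak{p}_1$ inside $\mathbf{C}l_2(F_1)$, come from transporting the relations of part (1) in $\mathbf{C}l_2(\overline{k}_1)$ and of part (2) in $\mathbf{C}l_2(k_1)$ along the class-extension maps into $\mathbf{C}l_2(F_1)$, while tracking how $\mathfrak{2}$, $\mathfrak{2}_1$ and $\mathfrak{p}_1$ decompose in $F_1$ --- for instance $\mathfrak{2}$ of $\overline{k}_1$ splits in $F_1$, which is exactly why the factor $\mathfrak{2}$ disappears from the relation once $\left(\frac{p_1}{p_2}\right)=-1$ --- and using $N_{F_1/\overline{k}_1}\circ j_{\overline{k}_1\to F_1}=(\,\cdot\,)^2$ to control the ambiguity introduced by the norm. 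The main obstacle is precisely this last stage of bookkeeping: reconciling the three arithmetic case distinctions with the combinatorics of the exponents $\min(n,m-1)$, $\max(m-1,n+1)$, $n+1$, and the degenerate subcase $\mathfrak{p}_1\sim1$, while keeping exact track of which subfield classes survive, split, or collapse in $F_1$; everything else is either classical genus theory, R\'edei--Reichardt theory, or a direct appeal to \cite{Lm94}.
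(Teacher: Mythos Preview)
The paper does not give a proof of this theorem; it simply introduces it with the sentence ``The following results are deduced from \cite{Lm94}'' and then uses it as input for Lemma \ref{10:11} and the later computations. So there is nothing in the paper to compare your argument against beyond that citation.

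Your sketch is essentially the route taken in \cite{Lm94}: Gauss/R\'edei--Reichardt genus theory for the $2$-rank and $4$-rank of the quadratic fields, identification of generators via Artin symbols in the genus field, the observation that $F_1/\overline{k}_1$ is unramified (since $-p_1p_2\equiv -1\pmod 8$ forces $(\overline{k}_1)_{\mathfrak 2}\simeq\QQ_2(i)$), and Kuroda's formula together with the unit-index computation to get $\lvert\mathbf{C}l_2(F_1)\rvert$. The case analysis in parts (4)--(6) is exactly the kind of bookkeeping Lemmermeyer carries out, tracking the images of $[\mathfrak 2]$, $[\mathfrak 2_1]$, $[\mathfrak p_1]$, and $[I]$ under the extension maps and using $N\circ j=(\cdot)^2$. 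Your outline is correct; the only thing to be mindful of in writing it out is that the justification of $n\ge 1$ in part (2) (i.e.\ that $\mathfrak 2_1$ is genuinely non-principal, not merely non-square) also requires the norm-form argument, and that in part (3) the assertion that $[I]$ and $[\mathfrak 2_{F_1}]$ actually generate (rather than just lie in) $\mathbf{C}l_2(F_1)$ needs a line showing their images in $\mathbf{C}l_2(F_1)/\mathbf{C}l_2(F_1)^2$ are independent.
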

 Using  the above theorem, we prove the following lemma.
   \begin{lem}\label{10:11}
   Let  $\mathfrak{p}_1\mathcal{O}_{F_1}=\mathcal{P}_1\mathcal{P}_2$ and  $p_2\mathcal{O}_{F_1}=\mathcal{P}_3^2\mathcal{P}_4^2$, then in $\mathbf{C}l_2(F_1)$ we have:
     \begin{enumerate}[\rm\indent(i)]
       \item If $\left(\frac{p_1}{p_2}\right)=-1$ or $\left(\frac{p_1}{p_2}\right)=1$ and $N(\varepsilon_{p_1p_2})=-1$, then  $\mathcal{P}_1\sim \mathfrak{2}_{F_1}^{2^{n-1}}I^{2^{m-2}}$.
      \item If  $\left(\frac{p_1}{p_2}\right)=1$ and $N(\varepsilon_{p_1p_2})=1$, then  $\mathcal{P}_1\sim \mathfrak{2}_{F_1}^{2^{n}}I^{2^{m-2}}$ or  $\mathcal{P}_1\sim I^{2^{m-2}}$. Moreover $\mathcal{P}_1\mathcal{P}_3\sim \mathfrak{2}_{F_1}^{2^{n}}$.
   \end{enumerate}
    \end{lem}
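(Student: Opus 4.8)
The plan is to reduce the whole statement to a computation inside the elementary abelian group $\mathbf{C}l_2(F_1)[2]$, which has order $4$ by Theorem~\ref{10:10}(3), and then to read off the answer from the relations recorded in Theorem~\ref{10:10}. First I would locate the primes involved. Since $p_1\equiv p_2\equiv 5\pmod 8$, each of $p_1,p_2$ splits in $\QQ(i)$, say $p_1=\pi_1\pi_2$ and $p_2=\pi_3\pi_4$, and because $v_{\pi_j}(p_1p_2)=1$ is odd, every $\pi_j$ ramifies in $F_1/\QQ(i)$; hence $\pi_1\mathcal{O}_{F_1}$ and $\pi_3\mathcal{O}_{F_1}$ are squares of prime ideals of $F_1$. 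This forces $F_1$ to have exactly two primes above $p_1$ (and two above $p_2$), so $\mathfrak{p}_1$ and $\mathfrak{p}_2$ necessarily split in $F_1/\overline{k}_1$, and it shows that $\mathcal{P}_i^2$ is principal for each prime $\mathcal{P}_i$ in the statement. In particular $[\mathcal{P}_1],[\mathcal{P}_3]\in\mathbf{C}l_2(F_1)[2]$. Writing $\sigma$ for the generator of $\mathrm{Gal}(F_1/\overline{k}_1)$ we have $\mathcal{P}_2=\sigma\mathcal{P}_1$, hence $[\mathcal{P}_1][\mathcal{P}_2]=[\mathfrak{p}_1]$; on ideal classes $\sigma$ fixes $[\mathfrak{p}_1]$ and $[I]$ (these come from $\overline{k}_1$), while $\sigma\mathfrak{2}_{F_1}$ is the other prime of $F_1$ over $\mathfrak{2}$, so $[\sigma\mathfrak{2}_{F_1}]=[\mathfrak{2}_{F_1}]^{-1}$ because the two primes over $\mathfrak{2}$ multiply to $(1+i)$.

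For part (i), where $I^{2^{m-1}}\sim\mathfrak{2}_{F_1}^{2^n}\sim\mathfrak{p}_1\not\sim1$ by Theorem~\ref{10:10}(4),(5), I would first note that $I$ has order $2^m$ and $\mathfrak{2}_{F_1}$ has order $2^{n+1}$, so $X:=\mathfrak{2}_{F_1}^{2^{n-1}}I^{2^{m-2}}$ satisfies $X^2\sim\mathfrak{2}_{F_1}^{2^n}I^{2^{m-1}}\sim\mathfrak{p}_1^2\sim1$, and a direct order count in $\mathbf{C}l_2(F_1)=\langle\mathfrak{2}_{F_1},I\rangle$ gives $X\notin\{1,[\mathfrak{p}_1]\}$. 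On the other hand $[\mathcal{P}_1]^2=1$, and the identity $\sigma[\mathcal{P}_1]=[\mathfrak{p}_1][\mathcal{P}_1]^{-1}$ together with $\sigma$ fixing $1$ and $[\mathfrak{p}_1]\neq1$ rules out $[\mathcal{P}_1]\in\{1,[\mathfrak{p}_1]\}$. Hence both $[\mathcal{P}_1]$ and $X$ lie in the two-element set $\mathbf{C}l_2(F_1)[2]\setminus\{1,[\mathfrak{p}_1]\}=\{X,X[\mathfrak{p}_1]\}$; since $\sigma$ interchanges $X$ and $X[\mathfrak{p}_1]$ (indeed $\sigma X=\mathfrak{2}_{F_1}^{-2^{n-1}}I^{2^{m-2}}=X[\mathfrak{p}_1]$) and also interchanges $\mathcal{P}_1$ and $\mathcal{P}_2$, relabelling the two primes over $\mathfrak{p}_1$ if necessary yields $\mathcal{P}_1\sim X=\mathfrak{2}_{F_1}^{2^{n-1}}I^{2^{m-2}}$, which is (i).

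In part (ii), Theorem~\ref{10:10}(6) gives $\mathfrak{p}_1\sim1$ in $F_1$ and $I^{2^{m-1}}\sim\mathfrak{2}_{F_1}^{2^{n+1}}\sim1$, so now the $\sigma$-relation only says that $[\mathcal{P}_1]$ is $\sigma$-fixed and killed by $2$, which by itself excludes nothing. Instead I would use that the relative norm sends $[\mathcal{P}_1]\mapsto[\mathfrak{p}_1]$ and $[\mathcal{P}_3]\mapsto[\mathfrak{p}_2]=[\mathfrak{p}_1]$ in $\mathbf{C}l_2(\overline{k}_1)$; since $[\mathfrak{p}_1]\neq1$ there, computing the norms of the four classes of $\mathbf{C}l_2(F_1)[2]$ (here $\mathbf{C}l_2(F_1)=\langle\mathfrak{2}_{F_1}\rangle\times\langle I\rangle$ with $|\mathfrak{2}_{F_1}|=2^{n+1}$, $|I|=2^{m-1}$) leaves exactly the two candidates $I^{2^{m-2}}$ and $\mathfrak{2}_{F_1}^{2^n}I^{2^{m-2}}$ for each of $[\mathcal{P}_1]$ and $[\mathcal{P}_3]$, and these cannot be separated by the Galois action — whence the ``or''. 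For the remaining assertion $\mathcal{P}_1\mathcal{P}_3\sim\mathfrak{2}_{F_1}^{2^n}$, which is equivalent to $[\mathcal{P}_1]\neq[\mathcal{P}_3]$, I would exploit that $N(\varepsilon_{p_1p_2})=1$ produces, via Lemma~\ref{10:5}(1), a representation $2p_1(a+1)=c^2$ in $\NN$; out of this one builds an explicit element of $F_1$ (essentially $\sqrt{2p_1\varepsilon_{p_1p_2}}$) whose principal ideal is divisible by $\mathcal{P}_1\mathcal{P}_3$ but only by a bounded power of $\mathfrak{2}_{F_1}$, which pins $\mathcal{P}_1\mathcal{P}_3$ down modulo $\langle\mathfrak{2}_{F_1}\rangle$ and forces the product to equal $\mathfrak{2}_{F_1}^{2^n}$.

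The order-counting in $\mathbf{C}l_2(F_1)[2]$ is routine. The genuine obstacle is the last step of (ii): once $\mathfrak{p}_1$ has capitulated in $F_1$, the Galois symmetry that settles case (i) degenerates, and distinguishing $[\mathcal{P}_1]$ from $[\mathcal{P}_3]$ — that is, proving the ``moreover'' — requires an honest ideal- and unit-theoretic computation with $\varepsilon_{p_1p_2}$ (or, equivalently, a reciprocity argument) rather than pure group theory.
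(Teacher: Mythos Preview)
Your argument for (i) and for the ``or'' clause in (ii) is correct and is essentially the paper's proof, phrased in terms of the Galois action of $\sigma\in\mathrm{Gal}(F_1/\overline{k}_1)$ rather than the norm $N_{F_1/\overline{k}_1}$; these are interchangeable since $N=1+\sigma$ on ideal classes. The paper, like you, lists the three nontrivial elements of $\mathbf{C}l_2(F_1)[2]$, rules out the one lying over $1\in\mathbf{C}l_2(\overline{k}_1)$ via the norm (your $\sigma$-fixed-point argument), and then observes that the remaining two are swapped by $\sigma$, so either can be called $\mathcal{P}_1$.

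The gap is exactly where you flag it: the ``moreover'' in (ii). The paper does not compute here at all --- it simply cites \cite[Proposition~1]{AZT12-2} for the fact that $\mathcal{P}_1\mathcal{P}_3$ is not principal in $F_1$, which together with $[\mathcal{P}_1],[\mathcal{P}_3]\in\{[I^{2^{m-2}}],[\mathfrak{2}_{F_1}^{2^n}I^{2^{m-2}}]\}$ forces $[\mathcal{P}_1]\neq[\mathcal{P}_3]$ and hence $\mathcal{P}_1\mathcal{P}_3\sim\mathfrak{2}_{F_1}^{2^n}$. Your proposed construction, however, does not reach $\mathcal{P}_1\mathcal{P}_3$: from Lemma~\ref{10:5}(1) one gets $\sqrt{2\varepsilon_{p_1p_2}}=b_1\sqrt{2p_1}+b_2\sqrt{2p_2}$, hence $p_1\varepsilon_{p_1p_2}=(p_1b_1+b_2\sqrt{p_1p_2})^2$ is a square in $k_1\subset F_1$, and the resulting principal ideal is $\mathcal{P}_1\mathcal{P}_2$, not $\mathcal{P}_1\mathcal{P}_3$. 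This only re-proves that $\mathfrak{p}_1$ capitulates in $F_1$, which you already used. Moreover, ``pinning down $\mathcal{P}_1\mathcal{P}_3$ modulo $\langle\mathfrak{2}_{F_1}\rangle$'' cannot distinguish $1$ from $\mathfrak{2}_{F_1}^{2^n}$, since both lie in that cyclic subgroup. What is actually needed is to show that $\pi_1\pi_3$ is not a unit times a square in $F_1$; that is a genuine case-by-case check against the unit group $E_{F_1}$ (or an equivalent symbol computation), and it is precisely the content of the cited proposition rather than something extractable from $\sqrt{2p_1\varepsilon_{p_1p_2}}$.
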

\begin{proof}
Let $p_1\mathcal{O}_{\QQ(i)}=\pi_1\pi_2$, $p_2\mathcal{O}_{\QQ(i)}=\pi_3\pi_4$, $\mathfrak{p}_1\mathcal{O}_{F_1}=\mathcal{P}_1\mathcal{P}_2$ and $\mathfrak{p}_2\mathcal{O}_{F_1}=\mathcal{P}_3\mathcal{P}_4$, where $\mathfrak{p}_2$ is the prime ideal of $\overline{k}_1$ above $p_2$, then $(\pi_i)=\mathcal{P}_i^2$, for all $i$. So, according to \cite[Proposition 1]{AZT12-2}, $\mathcal{P}_i$ are not principals in $F_1$ and they are of order two. On the other hand, as the 2-rank of $\mathbf{C}l_2(F_1)$ is 2, thus  $\mathcal{P}_i \in\langle[\mathfrak{2}_{F_1}], [I]\rangle$.\\
 \indent (i) In this case,  we have $\mathfrak{p}_1\not\sim1$, hence  $\mathcal{P}_1\not\sim\mathcal{P}_2$; note that the elements of order two in $\mathbf{C}l_2(F_1)$ are $\mathfrak{2}_{F_1}^{2^{n-1}}I^{2^{m-2}}$, $\mathfrak{2}_{F_1}^{2^{n-1}}I^{-2^{m-2}}$ and $\mathfrak{2}_{F_1}^{2^{n}}\sim I^{2^{m-1}}$. Therefore $\mathcal{P}_1$ is equivalent to one of these three elements.  As $\mathcal{P}_1\sim\mathfrak{2}_{F_1}^{2^{n}}\sim I^{2^{m-1}}$ can not occur, if not we would have, by applying  the norm $N_{F_1/\overline{k}_1}$, $\mathfrak{p}_1\sim I^{2^{m}}\sim1$, which is false. Thus $\mathcal{P}_1\sim\mathfrak{2}_{F_1}^{2^{n-1}}I^{2^{m-2}}$ and $\mathcal{P}_2\sim\mathfrak{2}_{F_1}^{2^{n-1}}I^{-2^{m-2}}$ or $\mathcal{P}_1\sim\mathfrak{2}_{F_1}^{2^{n-1}}I^{-2^{m-2}}$ and  $\mathcal{P}_2\sim\mathfrak{2}_{F_1}^{2^{n-1}}I^{2^{m-2}}$. Hence with out loss of generality we can choose $\mathcal{P}_1\sim\mathfrak{2}_{F_1}^{2^{n-1}}I^{2^{m-2}}$.\\
  \indent (ii) In this case,  we have $\mathfrak{p}_1\sim\mathfrak{p}_2\sim1$, hence  $\mathcal{P}_1\sim\mathcal{P}_2$ and $\mathcal{P}_3\sim\mathcal{P}_4$. On  the other hand, according to \cite[Proposition 1]{AZT12-2}, $\mathcal{P}_1\mathcal{P}_3$ is not principal in $F_1$. To this end, note that the elements of order two in $\mathbf{C}l_2(F_1)$ are $\mathfrak{2}_{F_1}^{2^{n}}I^{2^{m-2}}$, $\mathfrak{2}_{F_1}^{2^{n}}$ and $I^{-2^{m-2}}$. Therefore $\mathcal{P}_1$ is equivalent to one of these three elements. As $\mathcal{P}_1\sim\mathfrak{2}_{F_1}^{2^{n}}$ can not occur, as otherwise, by applying  the norm $N_{F_1/\overline{k}_1}$, we get $\mathfrak{p}_1\sim \mathfrak{2}^{2^{n}}\sim1$, which is false. Thus $\mathcal{P}_1\sim I^{2^{m-2}}$ and $\mathcal{P}_3\sim\mathfrak{2}_{F_1}^{2^{n}}I^{2^{m-2}}$ or $\mathcal{P}_1\sim\mathfrak{2}_{F_1}^{2^{n-1}}I^{2^{m-2}}$ and  $\mathcal{P}_3\sim I^{2^{m-2}}$. Hence $\mathcal{P}_1\mathcal{P}_3\sim \mathfrak{2}_{F_1}^{2^{n}}$.
\end{proof}
The following lemma gives some relations
between  $N(\varepsilon_{p_1p_2})$ and the positive integers   $n$, $m$. It is a deduction from  \cite{Ka76} and \cite{Sc-34}.
  \begin{lem}\label{10:12}
Let $p_1\equiv p_2\equiv5\pmod8$ be different primes.\\
$(1)$ Suppose that $N(\varepsilon_{p_1p_2})=-1$, then
 \begin{enumerate}[\rm\indent(i)]
   \item If $\left(\frac{p_1}{p_2}\right)=-1$, then $n=1$ and $m\geq2$. Moreover:
  \begin{enumerate}[\rm\indent(a)]
   \item  $m\geq3\Leftrightarrow\displaystyle\left(\frac{p_1p_2}{2}\right)_4\displaystyle\left(\frac{2p_1}{p_2}\right)_4
\displaystyle\left(\frac{2p_2}{p_1}\right)_4=1.$
   \item $m=2\Leftrightarrow\displaystyle\left(\frac{p_1p_2}{2}\right)_4\displaystyle\left(\frac{2p_1}{p_2}\right)_4
\displaystyle\left(\frac{2p_2}{p_1}\right)_4=-1.$
 \end{enumerate}
  \item If $\left(\frac{p_1}{p_2}\right)=1$, then $n\geq2$ and $m=2$. Moreover:
    \begin{enumerate}[\rm\indent(a)]
   \item If $\left(\frac{p_1}{p_2}\right)_4=\left(\frac{p_2}{p_1}\right)_4=-1$, then $n=2$.
   \item If $\left(\frac{p_1}{p_2}\right)_4=\left(\frac{p_2}{p_1}\right)_4=1$, then  $n\geq2$.
 \end{enumerate}
 \end{enumerate}
  $(2)$ Suppose that $N(\varepsilon_{p_1p_2})=1$, then $\left(\frac{p_1}{p_2}\right)=1$,  $n\geq1$ and $m\geq2$. Moreover:
 \begin{enumerate}[\rm\indent(i)]
   \item If $\left(\frac{p_1}{p_2}\right)_4\left(\frac{p_2}{p_1}\right)_4=-1$, then $n=1$ and $m\geq3$.
   \item If $\left(\frac{p_1}{p_2}\right)_4=\left(\frac{p_2}{p_1}\right)_4=1$, then $m=2$ and $n\geq2$.
 \end{enumerate}
 \end{lem}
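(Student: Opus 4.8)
The plan is to reduce the whole statement to standard facts about the $2$-class groups of $\QQ(\sqrt{p_1p_2})$ and $\QQ(\sqrt{-p_1p_2})$: the genus-theoretic and R\'edei--Reichardt information recalled in the present setting in Theorem \ref{10:10}, Kaplan's criteria for divisibility by $8$ of the relevant class numbers \cite{Ka76}, and Scholz's reciprocity law \cite{Sc-34} --- equivalently, in the split case, Lemma \ref{10:9} --- to express all occurring residue symbols in comparable form. Throughout, $h(p_1p_2)=2^n$ and $h(-p_1p_2)=2^{m+1}$.

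\textbf{First, the coarse part.} Because $p_1\equiv p_2\equiv 5\pmod 8$, the discriminant of $\QQ(\sqrt{p_1p_2})$ is $p_1p_2$ with two ramified primes, so genus theory forces $\mathbf{C}l_2(\QQ(\sqrt{p_1p_2}))$ to be cyclic of order $2^n$, $n\ge 1$; the discriminant of $\QQ(\sqrt{-p_1p_2})$ is $-4p_1p_2$ with three ramified primes, so $\mathbf{C}l_2(\QQ(\sqrt{-p_1p_2}))$ has $2$-rank $2$ and, by Theorem \ref{10:10}(1), is of type $(2,2^m)$ with $m\ge 2$. Next I would invoke the classical criterion for the norm of the fundamental unit (R\'edei): $N(\varepsilon_{p_1p_2})=-1$ whenever $\left(\frac{p_1}{p_2}\right)=-1$; equivalently $N(\varepsilon_{p_1p_2})=1$ forces $\left(\frac{p_1}{p_2}\right)=1$ (the first assertion of part (2), and the reason part (1)(ii) supposes $\left(\frac{p_1}{p_2}\right)=1$). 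R\'edei's $4$-rank formula then says the narrow $2$-class group of $\QQ(\sqrt{p_1p_2})$ has order $2$ precisely when $\left(\frac{p_1}{p_2}\right)=-1$ and order $\ge 4$ when $\left(\frac{p_1}{p_2}\right)=1$. Combining: if $\left(\frac{p_1}{p_2}\right)=-1$ then $N(\varepsilon_{p_1p_2})=-1$ identifies narrow and wide, so $h(p_1p_2)=2$, i.e. $n=1$ (first sentences of (1)(i)); if $\left(\frac{p_1}{p_2}\right)=1$ the narrow $2$-class group is cyclic of order $\ge 4$, so $n\ge 2$ when $N(\varepsilon_{p_1p_2})=-1$ (bound in (1)(ii)) and $n\ge 1$ when $N(\varepsilon_{p_1p_2})=1$ (bound in (2)).

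\textbf{Then, the fine part.} Here one has to locate $n$ and $m$ one level deeper. For (1)(i)(a)--(b): whether $m\ge 3$ or $m=2$ is whether the $8$-rank of $\mathbf{C}l_2(\QQ(\sqrt{-p_1p_2}))$ is $1$ or $0$, and feeding the discriminant $-4p_1p_2$ with $\left(\frac{p_1}{p_2}\right)=-1$ into Kaplan's $8$-divisibility criterion \cite{Ka76} produces exactly $\left(\frac{p_1p_2}{2}\right)_4\left(\frac{2p_1}{p_2}\right)_4\left(\frac{2p_2}{p_1}\right)_4=1$ (each symbol is defined, since $p_1p_2\equiv 1\pmod 8$ and, for $\{i,j\}=\{1,2\}$, $\left(\frac{2p_i}{p_j}\right)=\left(\frac{2}{p_j}\right)\left(\frac{p_i}{p_j}\right)=(-1)(-1)=1$). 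For (1)(ii)(a)--(b) and (2)(i)--(ii), where $\left(\frac{p_1}{p_2}\right)=1$: the length of the cyclic group $\mathbf{C}l_2(\QQ(\sqrt{p_1p_2}))$ beyond $4$ is controlled by Kaplan's $8$-divisibility criterion for the discriminant $p_1p_2$, which is phrased through $\left(\frac{p_1}{p_2}\right)_4$ and $\left(\frac{p_2}{p_1}\right)_4$, and Scholz's reciprocity \cite{Sc-34} (equivalently Lemma \ref{10:9}) shows that the product $\left(\frac{p_1}{p_2}\right)_4\left(\frac{p_2}{p_1}\right)_4$ separates the case $N(\varepsilon_{p_1p_2})=1$, $n=1$ (product $=-1$, (2)(i)) from the cases where the two quartic symbols agree: both $=-1$ gives $N(\varepsilon_{p_1p_2})=-1$, $n=2$ ((1)(ii)(a)); both $=1$ leaves either $N(\varepsilon_{p_1p_2})=-1$, $n\ge 2$ ((1)(ii)(b)) or $N(\varepsilon_{p_1p_2})=1$, $n\ge 2$ ((2)(ii)). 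Finally, in every one of these subcases I would run Kaplan's criterion for $-4p_1p_2$ once more: since now $\left(\frac{p_1}{p_2}\right)=1$ and $\left(\frac{2}{p_i}\right)=-1$ the relevant quartic symbols collapse, forcing the $8$-rank of $\mathbf{C}l_2(\QQ(\sqrt{-p_1p_2}))$ to be $0$, i.e. $m=2$.

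\textbf{Main obstacle.} The hard part is the bookkeeping in the fine part: one must reconcile the normalisation of Kaplan's rational quartic residue symbols with those in Scholz's reciprocity and in Lemma \ref{10:9}, and then follow the interaction between the $8$-rank and the sign $N(\varepsilon_{p_1p_2})$ closely enough that the stated \emph{equivalences} (not merely implications) emerge --- in particular in the borderline configuration $\left(\frac{p_1}{p_2}\right)=1$, $\left(\frac{p_1}{p_2}\right)_4=\left(\frac{p_2}{p_1}\right)_4=1$, which is split by a still deeper residue symbol rather than by the quartic symbols alone.
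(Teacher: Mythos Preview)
Your overall strategy matches the paper's: the lemma is stated there without proof, merely as ``a deduction from \cite{Ka76} and \cite{Sc-34}'', and you correctly identify genus theory/R\'edei for the coarse shape of the two $2$-class groups, Scholz for the link between $N(\varepsilon_{p_1p_2})$ and the quartic symbols, and Kaplan's $8$-divisibility criteria for the finer information. The coarse part of your argument is fine.

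There is, however, a concrete error in your final step. You assert that ``in every one of these subcases'' with $\left(\frac{p_1}{p_2}\right)=1$ Kaplan's criterion for $-4p_1p_2$ forces the $8$-rank of $\mathbf{C}l_2(\QQ(\sqrt{-p_1p_2}))$ to vanish, i.e.\ $m=2$. This directly contradicts the very case (2)(i) you are trying to prove, where $\left(\frac{p_1}{p_2}\right)_4\left(\frac{p_2}{p_1}\right)_4=-1$ and the claim is $m\ge 3$. In fact Kaplan's Proposition~$B'_4$ (invoked explicitly elsewhere in the paper, in the proof of Theorem~\ref{10:14}) gives $h(-p_1p_2)=8$ (so $m=2$) exactly when the two quartic symbols \emph{agree}, and $h(-p_1p_2)\ge 16$ (so $m\ge 3$) when their product is $-1$. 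Thus the ``collapse'' you describe happens only in the subcases (1)(ii)(a)--(b) and (2)(ii); in (2)(i) the same criterion yields the opposite conclusion. Your sketch needs to separate these two outcomes rather than lump them together, and your ``Main obstacle'' paragraph already hints you sensed something was off --- the missing ingredient is not a deeper symbol but simply that Kaplan's criterion for $-4p_1p_2$ with $\left(\frac{p_1}{p_2}\right)=1$ depends on whether $\left(\frac{p_1}{p_2}\right)_4$ and $\left(\frac{p_2}{p_1}\right)_4$ agree or not.
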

 \section{On the $2$-class field tower of $\kk$}
\label{s:Tower}
 In this section, we will prove that the length of the $2$-class field tower of $\kk$ is $2$. Show first the following lemma:
 \begin{lem}\label{10:13}
 Let  $p_1\equiv p_2\equiv -q \equiv 1 \pmod4$ be different primes  satisfying the conditions \eqref{10:18}.
Put  $\kk=\QQ(\sqrt{p_1p_2q}, i)$, $\KK_3^+=\QQ(\sqrt{q}, \sqrt{p_1p_2})$ and  $\KK_3=\QQ(\sqrt{q}, \sqrt{p_1p_2}, i)$. Then
\begin{enumerate}[\rm\indent(1)]
  \item The rank of $\mathbf{C}l_2(\KK_3)$ is equal to $2$.
  \item The $2$-class group of $\KK_3^+$ is cyclic.
\end{enumerate}
 \end{lem}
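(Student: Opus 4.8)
The plan is to determine the $2$-ranks of $\mathbf{C}l_2(\KK_3)$ and $\mathbf{C}l_2(\KK_3^+)$ by Chevalley's ambiguous class number formula applied to well-chosen quadratic subextensions, supplemented for $\KK_3$ by a lower bound coming from the genus field $\M$.

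\emph{Assertion (2).} I would regard $\KK_3^+=\QQ(\sqrt q)\bigl(\sqrt{p_1p_2}\bigr)$ as a quadratic extension of $\QQ(\sqrt q)$. Since $q\equiv3\pmod4$, the real field $\QQ(\sqrt q)$ has $N(\varepsilon_q)=1$ and odd class number, i.e.\ $h(q)=1$; hence the nontrivial automorphism of $\KK_3^+/\QQ(\sqrt q)$ acts as inversion on $\mathbf{C}l_2(\KK_3^+)$, and the $2$-rank of $\mathbf{C}l_2(\KK_3^+)$ equals the $2$-adic valuation of the number of ambiguous ideal classes of $\KK_3^+/\QQ(\sqrt q)$. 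By \eqref{10:18} one has $\left(\frac{q}{p_1}\right)=\left(\frac{q}{p_2}\right)=-1$, so $p_1$ and $p_2$ stay inert in $\QQ(\sqrt q)$ and the only ramified primes in $\KK_3^+/\QQ(\sqrt q)$ are the prime above $p_1$ and the prime above $p_2$ (there is no ramification at infinity, $\KK_3^+$ being totally real). It then remains to prove that $E_{\QQ(\sqrt q)}=\langle-1,\varepsilon_q\rangle$ consists of norms from $\KK_3^+$, which by the Hasse norm theorem reduces to local checks at those two primes and at the primes above $2$: at a prime above $p_i$ the residue field is $\mathbb F_{p_i^2}$, in which $-1$, $2$ (and indeed every element of $\mathbb F_{p_i}$), and $\overline{\varepsilon_q}$ (which lies in $\ker(N_{\mathbb F_{p_i^2}/\mathbb F_{p_i}})$ because $N(\varepsilon_q)=1$ and $p_i$ is inert, hence is a square) are all squares; above $2$ one has $p_1p_2\equiv1\pmod8$, so $p_1p_2$ is a local square. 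Thus that unit norm index is $1$, Chevalley's formula produces a single ambiguous $2$-class, $\mathbf{C}l_2(\KK_3^+)$ is cyclic, and with $h(\KK_3^+)=2h(p_1p_2)$ from Lemma \ref{10:7} we get $\mathbf{C}l_2(\KK_3^+)\simeq\ZZ/2^{n+1}\ZZ$.

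\emph{Assertion (1).} The lower bound is immediate: $\KK_3=\kk(\sqrt q)$ is unramified over $\kk$, so $\KK_3\subseteq\M$ and $[\M:\KK_3]=\tfrac12\lvert\mathbf{C}l_2(\kk)\rvert=4$; since $\mathrm{Gal}(\M/\kk)\simeq\mathbf{C}l_2(\kk)$ is of type $(2,2,2)$, its quotient $\mathrm{Gal}(\M/\KK_3)\simeq(2,2)$ is a quotient of $\mathbf{C}l_2(\KK_3)$, whence $\mathrm{rank}\,\mathbf{C}l_2(\KK_3)\ge2$. For the matching upper bound I would use $\KK_3=F_1\bigl(\sqrt{-q}\bigr)$ with $F_1=\QQ(\sqrt{p_1p_2},i)$, whose $2$-class group has rank $2$ and is generated by $I$ and $\mathfrak2_{F_1}$ by Theorem \ref{10:10}(3). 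Because $-q\equiv1\pmod4$ and, by \eqref{10:18}, $q$ splits in $\QQ(\sqrt{p_1p_2})$ while staying inert in $\QQ(i)$ and in $\QQ(\sqrt{-p_1p_2})$, exactly the two primes of $F_1$ above $q$ ramify in $\KK_3/F_1$. One then describes the ambiguous class group $\mathbf{C}l_2(\KK_3)^{\mathrm{Gal}(\KK_3/F_1)}$ as generated by the extension of $\mathbf{C}l_2(F_1)$ together with the classes of those two ramified primes, and, using which classes of $\mathbf{C}l_2(F_1)$ capitulate in $\KK_3$ and a norm-index computation in the spirit of (2), shows that no third independent $2$-class appears, so that the rank remains $2$.

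The main obstacle is precisely the upper bound in (1): unlike $\QQ(\sqrt q)$, the base field $F_1$ has a large $2$-class group, so the $2$-rank of $\mathbf{C}l_2(\KK_3)$ is no longer simply the valuation of an ambiguous class number, and one must control simultaneously the capitulation of $\mathbf{C}l_2(F_1)$ in $\KK_3$, the classes of the primes of $\KK_3$ above $q$, and the relevant unit norm indices. Making this precise requires the full structural content of Theorem \ref{10:10} (the behaviour of $I$, $\mathfrak2_{F_1}$, $\mathfrak p_1$ in $\mathbf{C}l_2(F_1)$) and the relations tying $h(p_1p_2)$, $h(-p_1p_2)$ to the quadratic and quartic residue symbols attached to $p_1$ and $p_2$; the local Hilbert-symbol bookkeeping at the ramified primes, using \eqref{10:18} and $p_1\equiv p_2\equiv5\pmod8$, is then routine.
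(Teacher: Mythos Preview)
Your proof of (2) is correct and is essentially what the paper does (the paper only says ``proceeding similarly'' after proving (1), leaving the details of (2) to the reader).

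For (1), however, the paper takes a much simpler route that avoids the obstacle you identify. Instead of working over $F_1=\QQ(\sqrt{p_1p_2},i)$, whose $2$-class group has rank $2$, the paper uses $F_2=\QQ(\sqrt q,i)$ as the base of the quadratic extension $\KK_3/F_2$. Since $h(q)$, $h(-1)$ and $h(-q)$ are all odd, $F_2$ has odd class number, and the genus-theory rank formula $r=t-1-e$ applies directly, exactly as in your argument for (2): no capitulation, no pre-existing $2$-classes to track. One finds $t=4$ (the primes of $F_2$ above $p_1$ and $p_2$; each $p_j$ splits in $\QQ(i)$ and stays inert over $\QQ(\sqrt q)$, giving two primes each, and these ramify in $\KK_3/F_2$ because $\KK_3/\kk$ is unramified while $p_j$ is unramified in $F_2/\QQ$ but ramified in $\kk/\QQ$). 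A Hilbert-symbol computation with $E_{F_2}=\langle i,\sqrt{i\varepsilon_q}\rangle$ then shows that $i$ is a local norm at every ramified prime but $\sqrt{i\varepsilon_q}$ is not (the symbol $\left(\frac{p_1p_2,\sqrt{i\varepsilon_q}}{\mathfrak p}\right)$ reduces to $\left(\frac{2}{p_j}\right)=-1$), so $e=1$ and $r=4-1-1=2$.

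Your approach via $F_1$ is not wrong in principle, and your lower bound $r\ge 2$ from $\kk_2^{(1)}/\KK_3$ is clean; but the upper bound you sketch is genuinely incomplete as written, and you correctly diagnose that completing it would drag in all of Theorem~\ref{10:10}. The moral is simply to choose the quadratic base with odd class number whenever one is available: here $F_2$ plays for $\KK_3$ exactly the role that $\QQ(\sqrt q)$ plays for $\KK_3^+$, and the two computations become parallel.
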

 \begin{proof}
(1) Put $F_2=\QQ(\sqrt q, i)$ and let $\varepsilon_q=1+\sqrt q$ denote the fundamental unit of  $\QQ(\sqrt q)$. Then, according to \cite{Az-02}, $2\varepsilon_q$ is a square in $F_2^+$, thus   \cite{Az-05} yields that the unit group of  $F_2$ is $E_{F_2}=\langle i,\sqrt{i\varepsilon_q}\rangle$. As the class number of $F_2$ is odd, then the $2$-rank of  $\KK_3$ is: $r=t-e-1$, where  $t$ is  the number of ramified primes  (finite and infinite)  in  $\KK_3/F_2$ and  $2^e=[E_{F_2}:E_{F_2}\cap N_{\KK_3/F_2}(\KK_3^\times)]$. The following diagram helps us to calculate $t$.
 \begin{figure}[H]
 \centering
 \xymatrix@R=0.22cm@C=0.4cm{&\QQ(\sqrt q)\ar[drr]\\
&&&F_2=\QQ(\sqrt q,i)\ar[drr]\\
\QQ \ar[ddr]\ar[uur] \ar[r] &\QQ(i)\ar[urr]\ar[drr]&&&&\KK_3=\QQ(\sqrt q, \sqrt{p_1p_2}, i)\\
  &&&\kk=\QQ(\sqrt{p_1p_2q},i) \ar[urr]\\
& \QQ(\sqrt{p_1p_2q})\ar[urr]  }
\caption{Primes ramifying in $\KK_3/F_2$}
\end{figure}
 Let  $p$ be a prime, we denote by  $\mathfrak{p}_M$ a prime ideal of the extension  $M/\QQ$ lying above $p$ and  $e(\mathfrak{p}_M/p)$ its ramification index. As the extension  $\KK_3/\kk$ is unramified (see \cite{AZT12-1}), then  $e(\mathfrak{p}_{F_2}/p).e(\mathfrak{p}_{\KK_3}/\mathfrak{p}_{F_2})=e(\mathfrak{p}_\kk/p)$.  Moreover,  $2$ and  $q$ are ramified in   $F_2$ and $\kk$, then  $e(\mathfrak{2}_{\KK_3}/\mathfrak{2}_{F_2})=1$ and $e(\mathfrak{q}_{\KK_3}/\mathfrak{q}_{F_2})=1$. On the other hand, for all $j\in\{1,2\}$  $\displaystyle\left(\frac{p_j}{q}\right)=-1$, hence $e(\mathfrak{p}_{j,F_2}/p_j)=1$, and since $e(\mathfrak{p}_{j,\kk}/p)=2$ so  $e(\mathfrak{p}_{j,\KK_3}/\mathfrak{p}_{j,F_2})=2$. Thus  $t=4$ and $r=3-e$.  To calculate the number $e$, we have to find the units of $F_2$  which are norms of elements of $\KK_3^\times/F_2$. Let $\mathfrak{p}$ be a prime ideal of $F_2$, then  by Hilbert symbol properties and  according to \cite[p. 205]{Gr-03},  we have:
\begin{itemize}
  \item If $\mathfrak{p}$ is not above  $p_1$ and $p_2$, then $v_\mathfrak{p}(\sqrt{i\varepsilon_q})=v_\mathfrak{p}(p_1p_2)=v_\mathfrak{p}(i)=0$, hence $\displaystyle\left(\frac{p_1p_2,\sqrt{i\varepsilon_q}}{\mathfrak{p}}\right)= \displaystyle\left(\frac{p_1p_2, i}{\mathfrak{p}}\right)=1$.
  \item If $\mathfrak{p}$ lies above  $p_1$ or $p_2$, then $v_\mathfrak{p}(\sqrt{i\varepsilon_q})=v_\mathfrak{p}( i)=0$ and $v_\mathfrak{p}(p_1p_2)=1$, thus\\
 $\begin{array}{ll}
      \displaystyle\left(\frac{p_1p_2, i}{\mathfrak{p}}\right)=
      \displaystyle\left(\frac{i}{\mathfrak{p}}\right)^{v_\mathfrak{p}(p_1p_2)}=
      \displaystyle\left(\frac{ i}{\mathfrak{p}}\right)=
      \displaystyle\left(\frac{-1}{\mathfrak{p}_{\QQ(i)}}\right)=\left(\frac{-1}{p_j}\right)=1, \text { where }  j\in\{1, 2\}.
 \end{array}$\\
  $\begin{array}{ll}
 \displaystyle\left(\frac{p_1p_2,\sqrt{i\varepsilon_q}}{\mathfrak{p}}\right) &=
 \displaystyle\left(\frac{p_1p_2,2}{\mathfrak{p}}\right)
      \displaystyle\left(\frac{p_1p_2,1+i}{\mathfrak{p}}\right)
      \displaystyle\left(\frac{p_1p_2,\sqrt{2\varepsilon_q}}{\mathfrak{p}}\right) \\
      &=  \displaystyle\left(\frac{p_1p_2,i}{\mathfrak{p}}\right)
      \displaystyle\left(\frac{p_1p_2,1+i}{\mathfrak{p}}\right)
      \displaystyle\left(\frac{p_1p_2,\sqrt{2\varepsilon_q}}{\mathfrak{p}}\right)\\
      &=\displaystyle\left(\frac{p_1p_2,1+i}{\mathfrak{p}}\right)
      \displaystyle\left(\frac{p_1p_2,\sqrt{2\varepsilon_q}}{\mathfrak{p}}\right)\\
      &=1.\displaystyle\left(\frac{2}{p_j}\right), \text{ where }j\in\{1, 2\}\\
      &=-1.
 \end{array}$
\end{itemize}
Consequently,   $e=1$, and thus $r=2$.\\
\indent Proceeding similarly, we prove that the  $2$-class group of $\KK_3^+$ is cyclic.
 \end{proof}
  \begin{them}\label{10:14}
 Let  $p_1\equiv p_2\equiv -q \equiv 1 \pmod4$ be different primes  satisfying the conditions \eqref{10:18}. Put $\kk=\QQ(\sqrt{p_1p_2q}, i)$ and  $\KK_3=\QQ(\sqrt{q}, \sqrt{p_1p_2}, i)$. Denote by $\L$ the second Hilbert $2$-class field of $\kk$ and put   $G=Gal(\kk_2^{(2)}/\kk)$, then:
\begin{enumerate}[\rm\indent(1)]
  \item The $2$-class field tower of $\kk$ stops at   $\kk_2^{(2)}$.
  \item The order of $G$ satisfies $|G|=2h(\KK_3)\geq 64$.
\end{enumerate}
 \end{them}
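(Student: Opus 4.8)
My plan is to reduce both assertions to a single claim — that $\KK_3$ has an \emph{abelian} $2$-class field tower, i.e.\ that $N:=(\KK_3)_2^{(1)}$ has trivial $2$-class group — and to harvest the numerical bound along the way.

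For the reduction I would argue as follows. Since $\KK_3/\kk$ is unramified (Theorem~\ref{10:1}) and $N/\KK_3$ is unramified abelian, $N/\kk$ is unramified; transitivity of ``unramified'' shows that an unramified pro-$2$ extension is unramified over $\kk$ iff it is unramified over $\KK_3$, so the maximal unramified pro-$2$ extensions coincide: $\kk_2^{(\infty)}=(\KK_3)_2^{(\infty)}$. Next, $\kk_2^{(1)}/\KK_3$ is unramified and abelian — it is the subextension of the unramified abelian extension $\kk_2^{(1)}/\kk$, whose group is $\mathbf{C}l_2(\kk)\simeq(2,2,2)$, cut out by an index-$2$ subgroup, hence of type $(2,2)$ — so $\kk_2^{(1)}\subseteq N$; and since $\kk_2^{(1)}$ is intermediate in the abelian extension $N/\KK_3$, the extension $N/\kk_2^{(1)}$ is abelian and unramified, so $N\subseteq\kk_2^{(2)}$. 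This yields
\[
\kk_2^{(1)}\subseteq N=(\KK_3)_2^{(1)}\subseteq\kk_2^{(2)}\subseteq\kk_2^{(\infty)}=(\KK_3)_2^{(\infty)}.
\]
Since $[\kk_2^{(1)}:\KK_3]=h(\kk)/2=4$, while by Lemma~\ref{10:7} and the bounds $n\ge1$, $m\ge2$ of Theorem~\ref{10:10} one has $h(\KK_3)=2\,h(p_1p_2)\,h(-p_1p_2)=2^{\,n+m+2}\ge 2^{5}=32$, the first inclusion is strict; in particular $|G|\ge[N:\kk]=2h(\KK_3)\ge 64$, which is the inequality in~(2).

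Granting the claim $\mathbf{C}l_2(N)=1$, the $2$-class field tower of $\KK_3$ has length $1$, so $(\KK_3)_2^{(\infty)}=N$ and the chain collapses to $\kk_2^{(1)}\subsetneq N=\kk_2^{(2)}=\kk_2^{(\infty)}$. Hence the $2$-class field tower of $\kk$ stops at $\kk_2^{(2)}$, which is~(1) (the strict inclusion shows it does not stop at $\kk_2^{(1)}$, so its length is exactly $2$), and $|G|=[\kk_2^{(2)}:\kk]=[N:\kk]=2h(\KK_3)$, which together with the bound above completes~(2).

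What remains — and what I expect to be the genuine obstacle — is the claim $\mathbf{C}l_2(N)=1$, i.e.\ that $\KK_3$ has an abelian $2$-class field tower. By Lemma~\ref{10:13}, $\mathbf{C}l_2(\KK_3)$ has rank $2$, so $\KK_3$ has exactly three unramified quadratic extensions, which by Theorem~\ref{10:1}(6) are $\LL_1=\k$, $\LL_6$ and $\LL_7$, all lying in $\kk_2^{(1)}$; moreover $\KK_3$ is a CM field over $\KK_3^+$ and $\mathbf{C}l_2(\KK_3^+)$ is cyclic (Lemma~\ref{10:13}). The plan is to combine this rank-$2$ abelianization and the cyclicity of the plus part with the explicit description of $\mathbf{C}l_2(F_1)$ and of the splitting of $\mathfrak{p}_1$ and $p_2$ in $F_1$ from Theorem~\ref{10:10} and Lemma~\ref{10:11}, transported through the quadratic step $\KK_3=F_1(\sqrt q)$, to show that the (necessarily metabelian) tower group $\mathrm{Gal}(N/\KK_3)$ cannot be enlarged inside $\kk_2^{(\infty)}$ — equivalently, that none of $\LL_1$, $\LL_6$, $\LL_7$ admits an unramified quadratic extension that is unramified over $\KK_3$ and strictly larger than $N$ — or, failing a direct verification, to invoke the termination criterion for $2$-class field towers of fields with $2$-class group of rank $2$ from Benjamin--Lemmermeyer--Snyder \cite{B.L.S-03}. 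Once $\mathbf{C}l_2(N)=1$ is secured, the proof is complete.
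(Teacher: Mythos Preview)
Your reduction of both assertions to the single claim that $\KK_3$ has an abelian $2$-class field tower is correct and cleanly argued, and your derivation of the bound $|G|\ge 2h(\KK_3)\ge 64$ from Lemma~\ref{10:7} and Theorem~\ref{10:10} is fine. But the heart of the theorem is precisely that claim, and you have not proved it: your final paragraph only sketches two possible avenues without executing either. The first avenue --- transporting the structure of $\mathbf{C}l_2(F_1)$ from Theorem~\ref{10:10} and Lemma~\ref{10:11} up to $\KK_3$ and then arguing directly that none of $\LL_1,\LL_6,\LL_7$ admits a further unramified quadratic step --- is vague and not obviously tractable; those results are used later in the paper for a different purpose (pinning down the \emph{type} of $\mathbf{C}l_2(\KK_3)$, not the length of its tower). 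Your fallback reference to \cite{B.L.S-03} is also the wrong paper: that one concerns imaginary quadratic base fields with $(2,2,2)$ group, not a termination criterion for rank-$2$ fields.

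The paper's argument is much more direct and is the missing idea. One computes $h(\k)$, where $\k=\LL_1$ is the genus field (an unramified quadratic extension of $\KK_3$), via Lemmermeyer's class-number formula \cite{Lm95} for the $V_4$-extension of CM-fields $\k/\KK_3^+$ with intermediate fields $\KK_3$, $\KK=\QQ(\sqrt q,\sqrt{p_1p_2},\sqrt{-p_2})$ and $\mathbb{F}=\QQ(\sqrt q,\sqrt{p_1},\sqrt{p_2})$. After determining the Hasse indices $Q_{\KK_3}=Q_{\k}=2$, $Q_\KK=1$ and the class numbers $h(\mathbb{F})=h(\KK_3^+)/2$ and $h(\KK)=4h(p_1p_2)$ from the explicit unit systems of Lemma~\ref{10:7}, the formula collapses to $h(\k)=h(\KK_3)/2$. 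Now Proposition~7 of \cite{B.L.S-98} applies: a number field whose $2$-class group has rank~$2$ has an abelian $2$-class field tower if and only if it possesses an unramified quadratic extension with exactly half its $2$-class number. This criterion, together with the computation $h(\k)=h(\KK_3)/2$, is what closes the gap.
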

 \begin{proof}
 Let $\k$ denote the genus field of $\kk$, then $\k/\KK_3^+$ is a  $V_4$-extension of CM-type fields, The following diagram (Figure \ref{10:8}) clarifies this, put  $\KK=\QQ(\sqrt q,\sqrt{p_1p_2}, \sqrt{-p_2})$ and  $\mathbb{F}=\QQ(\sqrt q, \sqrt{p_1}, \sqrt{p_2})$:
 \begin{figure}[H]
 $$\xymatrix@R=0.8cm@C=0.3cm{
 &\k=\QQ(\sqrt{p_1}, \sqrt{p_2}, \sqrt q, i)\ar@{<-}[d] \ar@{<-}[dr] \ar@{<-}[ld] \\
\mathbb{K}=\QQ(\sqrt q,\sqrt{p_1p_2}, \sqrt{-p_2})\ar@{<-}[dr]& \KK_3=\QQ(\sqrt q, \sqrt{p_1p_2}, i) \ar@{<-}[d]& \mathbb{F}=\QQ(\sqrt q, \sqrt{p_1}, \sqrt{p_2})\ar@{<-}[ld]\\
&\KK_3^+=\QQ(\sqrt q, \sqrt{p_1p_2})}$$
\caption{Subfields of $\k/\KK_3^+$}\label{10:8}
\end{figure}
So, according to \cite{Lm95}, we have:
\begin{equation}\label{10:15}h(\k)=\frac{Q_{\k}}{Q_\mathbb{K}Q_{\KK_3}}\cdot\frac{\omega_{\k}}{\omega_\mathbb{K}\omega_{\KK_3}}\cdot
\frac{h(\mathbb{K})h(\KK_3)h(\mathbb{F})}{h(\KK_3^+)^2}\cdot\end{equation}
Note that $\omega_\mathbb{K}=2$ and $\omega_{\k}=\omega_{\KK_3}=12$ or $4$ according as  $q=3$ or not;  moreover  $W_{\k}=W_{\KK_3}=\langle i \rangle$ if $q\neq3$ and $W_{\k}=W_{\KK_3}=\langle i\xi\rangle$ if $q=3$, where $\xi$  is a
primitive 6\up{st} root of unity. On the first hand,  Lemma \ref{10:7} yields that  $E_{\KK_3}=\langle i, \varepsilon_{p_1p_2}, \sqrt{\varepsilon_q\varepsilon_{p_1p_2q}}, \sqrt{i\varepsilon_q} \rangle$ or $E_{\KK_3}=\langle i\xi, \varepsilon_{p_1p_2}, \sqrt{\varepsilon_q\varepsilon_{p_1p_2q}}, \sqrt{i\varepsilon_q} \rangle$, according as $q\neq3$ or not, so $Q_{\KK_3}=2$. On the other hand, \cite{Lm95} implies that  $Q_{\KK_3}|Q_{\k}[W_{\k}:W_{\KK_3}]=Q_{\k}$, hence $Q_{\k}=2$.

 At this end, we know that the  $2$-class group of  $\KK_3^+=\QQ(\sqrt q, \sqrt{p_1p_2})$ is cyclic of order $2h(p_1p_2)$ (see Lemmas \ref{10:7} and \ref{10:13}), moreover   $\mathbb{F}$ is an unramified quadratic  extension of $\KK_3^+$, then  $$h(\mathbb{F}) =\frac{h(\KK_3^+)}{2}=h(p_1p_2).$$
From Lemma \ref{10:7} we get   $E_{\KK_3^+}=\langle -1, \varepsilon_q, \varepsilon_{p_1p_2}, \sqrt{\varepsilon_q\varepsilon_{p_1p_2q}} \rangle$, so, according to  \cite[Proposition 3]{Az-99-2},    $E_{\mathbb{K}}=\langle -1, \varepsilon_q, \varepsilon_{p_1p_2}, \sqrt{\varepsilon_q\varepsilon_{p_1p_2q}} \rangle$ or  $\langle -1, \varepsilon_q, \varepsilon_{p_1p_2}, \sqrt{-\varepsilon_q\varepsilon_{p_1p_2q}} \rangle$. As  $2\varepsilon_q$ and $2\varepsilon_{p_1p_2q}$ are squares in  $\KK_3^+$, so $p_2\varepsilon_q$ and $p_2\varepsilon_{p_1p_2q}$ are not, if not we obtain that $2p_2$ is a square in  $\KK_3^+$, which is false. Similarly,    $p_2\varepsilon_q\varepsilon_{p_1p_2q}$  is not square in  $\KK_3^+$, since  $\varepsilon_q\varepsilon_{p_1p_2q}$ is. Furthermore   $p_2\sqrt{\varepsilon_q\varepsilon_{p_1p_2q}}$ and  $p_2\varepsilon_q\sqrt{\varepsilon_q\varepsilon_{p_1p_2q}}$ are not squares in  $\KK_3^+$, if not, by applying the  norm $N_{\KK_3^+/\QQ(\sqrt q)}$, we obtain that $\varepsilon_q$ is a square in $\QQ(\sqrt q)$, which is absurd;  consequently   $E_{\mathbb{K}}=\langle -1, \varepsilon_q, \varepsilon_{p_1p_2}, \sqrt{\varepsilon_q\varepsilon_{p_1p_2q}} \rangle$, this implies that $q(\mathbb{K}/\QQ)=2$. Finally,  the class number formula allows us to conclude that
 $$h(\mathbb{K})=4h(p_1p_2).$$
 Hence the equation  (\ref{10:15}) yields that
$$h(\k)=\frac{h(\KK_3)}{2},$$
  since $\omega_\mathbb{K}=2$, $W_{\mathbb{K}}=\{-1, 1\}$ and  $Q_\mathbb{K}=1$.
  Moreover, as the $2$-rank of $\mathbf{C}l_2(\KK_3)$  is equal to $2$ (Lemma \ref{10:13}), so we can apply Proposition 7 of \cite{B.L.S-98}, which says that $\KK_3$ has abelian  $2$-class field tower if and only if it has a quadratic unramified extension $\k/\KK_3$ such that $h(\k)=\frac{h(\KK_3)}{2}$; therefore $\KK_3$ has abelian  $2$-class field tower which terminates at the first stage; this implies that the $2$-class field tower of $\kk$ terminates at $\kk_2^{(2)}$, since  $\kk\subset\KK_3$. On the other hand, Lemma \ref{10:7} yields that  $h(\KK_3)=2h(p_1p_2)h(-p_1p_2)$. Moreover,  under our  conditions,  P. Kaplan affirmes in \cite[Proposition $B'_1$, p. 348]{Ka76} that  $h(-p_1p_2)\geq8$, whence  $h(\KK_3)\geq32$; which implies that $\kk_2^{(1)}\neq\kk_2^{(2)}$.

 Let us prove now that $|G|=2h(\KK_3)\geq 64$, for this we distinguish the following cases:\\
\indent Case 1: Assume that $\left(\frac{p_1}{p_2}\right)=-1$, so,  according to \cite[Proposition $B'_1$, p. 348]{Ka76}, we have:\\
a - If   $\left(\frac{p_1p_2}{2}\right)_4\left(\frac{2p_1}{p_2}\right)_4
\left(\frac{2p_2}{p_1}\right)_4=-1$, then $h(p_1p_2)=2$ and $h(-p_1p_2)=8$, thus $|G|=64$.\\
b - If $\left(\frac{p_1p_2}{2}\right)_4\left(\frac{2p_1}{p_2}\right)_4
\left(\frac{2p_2}{p_1}\right)_4=1$, then $h(p_1p_2)=2$ and  $h(-p_1p_2)\geq16$, thus  $|G|\geq128$.\\
\indent Case 2: Assume that $\left(\frac{p_1}{p_2}\right)=1$, so,  according to \cite[Proposition $B'_4$, p. 349]{Ka76}, we have:\\
 a -  If $\left(\frac{p_1}{p_2}\right)_4\left(\frac{p_2}{p_1}\right)_4=-1$, then  \cite{Sc-34} implies that   $h(p_1p_2)=2$ and   \cite{Ka76} yields  $h(-p_1p_2)\geq16$, thus   $|G|\geq128$.\\
 b - If $\left(\frac{p_1}{p_2}\right)_4=\left(\frac{p_2}{p_1}\right)_4=1$, then \cite{Ka76} implies that $h(-p_1p_2)=8$; moreover  $h(p_1p_2)\geq4$, thus $|G|\geq128$.\\
 c - If $\left(\frac{p_1}{p_2}\right)_4=\left(\frac{p_2}{p_1}\right)_4=-1$, then \cite{Ka76} implies that  $h(-p_1p_2)=8$ and   \cite{Sc-34} yields that $h(p_1p_2)=4$, thus $|G|=128$.
 This ends the proof of the theorem.
 \end{proof}
   From the proof of Theorem \ref{10:14}, we deduce the following result:
  \begin{coro}\label{10:16}
 Let  $p_1\equiv p_2\equiv -q \equiv 1 \pmod4$ be different primes  satisfying the conditions \eqref{10:18}. Put $\kk=\QQ(\sqrt{p_1p_2q}, i)$ and  $\KK_3=\QQ(\sqrt{q}, \sqrt{p_1p_2}, i)$. Denote by $\L$ the second Hilbert $2$-class field of $\kk$ and put   $G=Gal(\kk_2^{(2)}/\kk)$, then:
  \begin{enumerate}[\upshape\indent(1)]
 \item $|G|=64$ if and only if $\left(\frac{p_1}{p_2}\right)=-1$ and   $\left(\frac{p_1p_2}{2}\right)_4\left(\frac{2p_1}{p_2}\right)_4\left(\frac{2p_2}{p_1}\right)_4=-1.$
 \item  $|G|=128$ if and only if one of the following conditions holds:
  \begin{enumerate}[\upshape\indent(i)]
     \item $\left(\frac{p_1}{p_2}\right)=-1$ and $h(-p_1p_2)=16$.
    \item $\left(\frac{p_1}{p_2}\right)=1$, $\left(\frac{p_1}{p_2}\right)_4=-\left(\frac{p_2}{p_1}\right)_4$ and $h(-p_1p_2)=16$.
    \item $\left(\frac{p_1}{p_2}\right)=1$, $\left(\frac{p_1}{p_2}\right)_4=\left(\frac{p_2}{p_1}\right)_4=1$ and  $h(p_1p_2)=4$.
         \item $\left(\frac{p_1}{p_2}\right)=1$ and $\left(\frac{p_1}{p_2}\right)_4=\left(\frac{p_2}{p_1}\right)_4=-1$.
  \end{enumerate}
  \item  $|G|\geq256$ if and only if one of the following conditions holds:
  \begin{enumerate}[\upshape\indent(i)]
     \item $\left(\frac{p_1}{p_2}\right)=-1$ and $h(-p_1p_2)\geq32$.
    \item $\left(\frac{p_1}{p_2}\right)=1$, $\left(\frac{p_1}{p_2}\right)_4=-\left(\frac{p_2}{p_1}\right)_4$ and $h(-p_1p_2)\geq32$.
    \item $\left(\frac{p_1}{p_2}\right)=1$, $\left(\frac{p_1}{p_2}\right)_4=\left(\frac{p_2}{p_1}\right)_4=1$ and  $h(p_1p_2)\geq8$.
  \end{enumerate}
  \end{enumerate}
 \end{coro}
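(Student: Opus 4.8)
The plan is to read off Corollary \ref{10:16} directly from the case analysis already carried out inside the proof of Theorem \ref{10:14}, since the corollary is merely a reorganization of that analysis into "if and only if" form. Recall from Lemma \ref{10:7} that $h(\KK_3)=2h(p_1p_2)h(-p_1p_2)$, hence $\lvert G\rvert=2h(\KK_3)=4h(p_1p_2)h(-p_1p_2)$, and Lemma \ref{10:12} together with Kaplan's Propositions $B_1'$ and $B_4'$ (cited from \cite{Ka76}) pins down $h(p_1p_2)$ and $h(-p_1p_2)$ in each subcase of the sign of $\left(\frac{p_1}{p_2}\right)$ and the relevant quartic residue symbols. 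So the whole statement reduces to tabulating the possible values of the product $h(p_1p_2)h(-p_1p_2)$.

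First I would fix $\left(\frac{p_1}{p_2}\right)=-1$. Then Lemma \ref{10:12}(1)(i) gives $h(p_1p_2)=2$ (i.e. $n=1$) unconditionally, so $\lvert G\rvert=8h(-p_1p_2)$; combined with $h(-p_1p_2)\geq 8$ this yields $\lvert G\rvert=64\iff h(-p_1p_2)=8\iff m=2\iff\left(\frac{p_1p_2}{2}\right)_4\left(\frac{2p_1}{p_2}\right)_4\left(\frac{2p_2}{p_1}\right)_4=-1$ by Lemma \ref{10:12}(1)(i)(a)-(b), while $\lvert G\rvert=128\iff h(-p_1p_2)=16$ and $\lvert G\rvert\geq 256\iff h(-p_1p_2)\geq 32$. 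Next I would fix $\left(\frac{p_1}{p_2}\right)=1$ and split on the quartic symbols: if $\left(\frac{p_1}{p_2}\right)_4=-\left(\frac{p_2}{p_1}\right)_4$ then by \cite{Sc-34} and Lemma \ref{10:12} one has $h(p_1p_2)=2$ so again $\lvert G\rvert=8h(-p_1p_2)$ with $h(-p_1p_2)\geq 16$, giving the dichotomy $\lvert G\rvert=128$ versus $\lvert G\rvert\geq 256$ according as $h(-p_1p_2)=16$ or $\geq 32$; if $\left(\frac{p_1}{p_2}\right)_4=\left(\frac{p_2}{p_1}\right)_4=1$ then $h(-p_1p_2)=8$ so $\lvert G\rvert=16h(p_1p_2)$ with $h(p_1p_2)\geq 4$, giving $\lvert G\rvert=128\iff h(p_1p_2)=4$ and $\lvert G\rvert\geq 256\iff h(p_1p_2)\geq 8$; and if $\left(\frac{p_1}{p_2}\right)_4=\left(\frac{p_2}{p_1}\right)_4=-1$ then by \cite{Ka76} and \cite{Sc-34} both $h(-p_1p_2)=8$ and $h(p_1p_2)=4$ are forced, so $\lvert G\rvert=128$ always, which is why this case appears only in item (2)(iv) and has no counterpart in (1) or (3).

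Finally I would assemble these subcase computations into the three equivalences of the corollary, checking that the listed conditions are mutually exclusive and exhaustive (they partition all $(p_1,p_2,q)$ satisfying \eqref{10:18} according to $\left(\frac{p_1}{p_2}\right)$, the quartic symbols, and the size of the relevant class number), and that no other value of $\lvert G\rvert$ below $256$ can occur. I do not expect a genuine obstacle here: every ingredient is already established in the excerpt (Lemmas \ref{10:7}, \ref{10:12} and the proof of Theorem \ref{10:14}), and the only care needed is bookkeeping — making sure, for instance, that in case $\left(\frac{p_1}{p_2}\right)=1$, $N(\varepsilon_{p_1p_2})=-1$ the symbol condition $\left(\frac{p_1}{p_2}\right)_4\left(\frac{p_2}{p_1}\right)_4=-1$ of Lemma \ref{10:12}(1)(ii)(a) matches the phrasing $\left(\frac{p_1}{p_2}\right)_4=-\left(\frac{p_2}{p_1}\right)_4$ used in the corollary, and that the possibility $N(\varepsilon_{p_1p_2})=1$ is folded in correctly. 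The writeup is therefore essentially a one-line deduction: "This follows immediately from the proof of Theorem \ref{10:14} together with Lemma \ref{10:12}," possibly expanded by the short table above for the reader's convenience.
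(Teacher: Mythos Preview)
Your proposal is correct and follows exactly the paper's approach: the paper introduces the corollary with the single line ``From the proof of Theorem \ref{10:14}, we deduce the following result,'' and the case analysis you spell out (using $\lvert G\rvert=4h(p_1p_2)h(-p_1p_2)$ from Lemma \ref{10:7} together with the constraints on $m$, $n$ from Lemma \ref{10:12} and \cite{Ka76}, \cite{Sc-34}) is precisely the content of Cases~1 and~2 in that proof. Your write-up is in fact more detailed than what the paper provides.
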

 \section{Proofs of the main results}
\label{s:Proofs}
 Recall first the following result from \cite[p. 205]{Gr-03}.
\begin{lem}\label{10:17}
 If $\mathcal{H}$ is an unramified ideal in some extension  $\KK/\kk=\kk(\sqrt{x})/\kk$, then the quadratic residue symbol is given by the Artin symbol $\varphi=\left(\frac{\kk(\sqrt{ x})/\kk}{\mathcal{H}}\right)$  as follows: $\left(\frac{x}{\mathcal{H}}\right)=\sqrt{x}^{\varphi-1}.$
 \end{lem}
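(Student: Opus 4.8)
The plan is to recognise Lemma \ref{10:17} as the classical identification of the Frobenius (Artin) symbol of a degree-two Kummer extension with the quadratic residue symbol, so that the proof is a straightforward unwinding of definitions, exactly as recalled from \cite[p. 205]{Gr-03}; no deep input is needed.

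First I would reduce to the case where $\mathcal{H}$ is a prime ideal of $\kk$. Both sides of the asserted identity are multiplicative in $\mathcal{H}$: the Artin map sends a product of ideals to the product of the corresponding Frobenii, so $\sqrt{x}^{\,\varphi(\mathcal{H}\mathcal{H}')-1}=\sqrt{x}^{\,\varphi(\mathcal{H})-1}\cdot\sqrt{x}^{\,\varphi(\mathcal{H}')-1}$, while $\left(\frac{x}{\,\cdot\,}\right)$ is multiplicative in its lower argument by definition. Since every prime ramifying in $\KK=\kk(\sqrt{x})$ over $\kk$ divides $2x$, an $\mathcal{H}$ unramified in $\KK/\kk$ is coprime to $2x$ (after choosing $x$ to be a squarefree, $2$-reduced representative of its class in $\kk^{\times}/(\kk^{\times})^{2}$, which is the form in which $x$ always occurs here); in particular $\sqrt{x}$ is $\mathcal{H}$-integral and a unit at $\mathcal{H}$, and $N\mathcal{H}$ is odd so that $(N\mathcal{H}-1)/2\in\ZZ$.

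Next, for such a prime $\mathcal{H}$ pick a prime $\mathfrak{H}$ of $\KK$ above it. The Frobenius $\varphi=\left(\frac{\KK/\kk}{\mathcal{H}}\right)\in\mathrm{Gal}(\KK/\kk)$ is characterised by $\alpha^{\varphi}\equiv\alpha^{N\mathcal{H}}\pmod{\mathfrak{H}}$ for all $\alpha\in\OO_{\KK}$. Taking $\alpha=\sqrt{x}$ gives
\[
\sqrt{x}^{\,\varphi}\equiv(\sqrt{x})^{N\mathcal{H}}=x^{(N\mathcal{H}-1)/2}\,\sqrt{x}\pmod{\mathfrak{H}},
\]
and Euler's criterion in the finite field $\OO_{\kk}/\mathcal{H}$ says $x^{(N\mathcal{H}-1)/2}\equiv\left(\frac{x}{\mathcal{H}}\right)\pmod{\mathcal{H}}$, the right-hand side being $\pm 1$ by definition of the quadratic residue symbol. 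Because $\mathrm{Gal}(\KK/\kk)=\{1,\ \text{conjugation}\}$, we have $\sqrt{x}^{\,\varphi-1}:=\sqrt{x}^{\,\varphi}/\sqrt{x}\in\{\pm 1\}$; dividing the displayed congruence by the unit $\sqrt{x}$ yields $\sqrt{x}^{\,\varphi-1}\equiv\left(\frac{x}{\mathcal{H}}\right)\pmod{\mathfrak{H}}$, and since both sides lie in $\{\pm 1\}$ and $2\notin\mathfrak{H}$, the congruence is in fact an equality. Combining this with the multiplicativity step finishes the proof.

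There is essentially no obstacle here, the statement being a recollection of a standard fact of class field theory. The only point that needs a word of care is the bookkeeping in the reduction step, namely making sure that ``unramified'' forces coprimality to $2x$ so that $\sqrt{x}$ is integral and a unit at $\mathcal{H}$ and Euler's criterion applies; but this is automatic for the elements $x$ that arise (namely $p_1,p_2,q,\pi_1\pi_3,\pi_1\pi_4,\ldots$) in the extensions $\KK_j/\kk$ and $\LL_j/\kk$ considered in this paper.
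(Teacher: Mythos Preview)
Your argument is correct and is the standard unwinding of the definitions of the Artin symbol and the quadratic residue symbol via Euler's criterion. The paper itself does not prove this lemma at all: it is introduced with ``Recall first the following result from \cite[p. 205]{Gr-03}'' and then simply stated, so there is no paper proof to compare against. Your write-up therefore supplies strictly more than the paper does; the only mild caveat is the one you already flag, namely that ``unramified in $\KK/\kk$'' does not literally force $\mathcal{H}\nmid 2x$ unless $x$ is taken squarefree at $\mathcal{H}$, but as you note this is harmless for the generators $p_1,p_2,q,\pi_i\pi_j$ actually used in the paper.
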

\subsection{Proof of Theorem \ref{10:2}}
 (1) The assertion $\mathbf{C}l_2(\kk)=\langle[\mathcal{H}_1], [\mathcal{H}_2], [\mathcal{H}_3]\rangle\simeq(2, 2, 2)$ of  Theorem \ref{10:2} is proved in \cite{AT09} and \cite{AZT12-2}. In the following pages, we will prove the other assertions.\\
(2) {\bf Types of $\mathbf{C}l_2(\KK_3)$} To prove the second assertion we will use the techniques that F. Lemmermeyer  has used in some of his works see for example \cite{Lm94} or \cite{Lm97}. Consider the following diagram (Figure \ref{10:21}):
 \begin{figure}[H]
$$ \xymatrix@R=0.22cm@C=0.4cm{&&&& F_1=\QQ(\sqrt{p_1p_2}, i)\ar[ddrr]\\
 \\
&&\QQ(i) \ar[ddrr]\ar[uurr] \ar[rr] && \kk=\QQ(\sqrt{p_1p_2q}, i) \ar[rr] &&  \KK_3=\QQ(\sqrt{q}, \sqrt{p_1p_2}, i)\\
\\
&&&&F_2=\QQ(\sqrt{q}, i)\ar[uurr]  }$$
\caption{Subfields of $\KK_3/\QQ(i)$}\label{10:21}
\end{figure}
 Note first that  $\mathcal{H}_j$,  $\mathcal{P}_j$ coincide and remain inert in  $\KK_3$, and that  $\mathfrak{p}_1\mathcal{O}_{\KK_3}=\mathcal{H}_1\mathcal{H}_2\mathcal{O}_{\KK_3}=\mathcal{P}_1\mathcal{P}_2\mathcal{O}_{\KK_3}$,  $\mathfrak{p}_2\mathcal{O}_{\KK_3}=\mathcal{H}_3\mathcal{H}_4\mathcal{O}_{\KK_3}=\mathcal{P}_3\mathcal{P}_4\mathcal{O}_{\KK_3}$, where $\mathfrak{p}_2$ is the prime ideal of  $\overline{k}_1$ above $p_2$. On the other hand, as   $\mathcal{H}_j$, with $j\in\{1, 2, 3, 4\}$,  is unramified in $\KK_3/\kk=\kk(\sqrt q)/\kk=\kk(\sqrt{p_1p_2})/\kk$, then  Lemma \ref{10:17} implies that
 \begin{align*}
 \displaystyle\left(\frac{q}{\mathcal{H}_1\mathcal{H}_2}\right)
 &=\displaystyle\left(\frac{q}{\mathcal{H}_1}\right)\displaystyle\left(\frac{q}{\mathcal{H}_2}\right)\\
  &=\displaystyle\left(\frac{q}{p_1}\right)\displaystyle\left(\frac{q}{p_1}\right)\\
  &=1.
\end{align*}
 \begin{align*}
 \displaystyle\left(\frac{q}{\mathcal{H}_1\mathcal{H}_3}\right)
 &=\displaystyle\left(\frac{q}{\mathcal{H}_1}\right)\displaystyle\left(\frac{q}{\mathcal{H}_3}\right)\\
  &=\displaystyle\left(\frac{q}{p_1}\right)\displaystyle\left(\frac{q}{p_2}\right)\\
  &=1.
\end{align*}
 Consequently
$$N_{\KK_3/\kk}(\mathbf{C}l_2(\KK_3))=\langle[\mathcal{H}_1\mathcal{H}_2], [\mathcal{H}_1\mathcal{H}_3]\rangle,$$
since
 $$N_{\KK_3/\kk}(\mathbf{C}l_2(\KK_3))=\{[\mathcal{H}]
 \in\mathbf{C}l_2(\kk)/\displaystyle\left(\frac{2}{[\mathcal{H}]}\right)=1\}.$$
\indent  Let us determine  $\kappa_{\KK_3/\kk}$.   We know, from Lemma \ref{10:7}, that\\
$E_{\KK_3}=\langle  i, \varepsilon_{p_1p_2}, \sqrt{\varepsilon_q\varepsilon_{p_1p_2q}},  \sqrt{i\varepsilon_{q}}\rangle$ and that $E_\kk=\langle i, \varepsilon_{p_1p_2q}\rangle$, hence $N_{\KK_3/\kk}(E_{\KK_3})= \langle i, \varepsilon_{p_1p_2q}\rangle$, thus
 $[E_\kk: N_{\KK_3/\kk}(E_{\KK_3})]=1$,
 and
  $\#\kappa_{\KK_3/\kk}=2.$

(a) If $N(\varepsilon_{p_1p_2})=1$, then putting  $\varepsilon_{p_1p_2}=a+b\sqrt{p_1p_2}$ and applying  Lemma \ref{10:5}, we get $\sqrt{2\varepsilon_{p_1p_2}}=b_1\sqrt{2p_1}+b_2\sqrt{2p_2}$, where $b=2b_1b_2$. This implies that  $p_1\varepsilon_{p_1p_2}$ is a square in $\KK_3$, therefore  there exist  $\alpha\in\KK_3$ such that $(p_1)=(\alpha^2)$.  As $(\mathcal{H}_1\mathcal{H}_2)^2=(p_1)$, so  $\mathcal{H}_1\mathcal{H}_2=(\alpha)$, which yields that  $\mathcal{H}_1\mathcal{H}_2$ capitulates in $\KK_3$.

(b) If $N(\varepsilon_{p_1p_2})=-1$ and since $p_1p_2\equiv1 \pmod8$,  then, by decomposition uniqueness in $\ZZ[i]$, there exist  $b_1$ and $b_2$ in $\ZZ[i]$ such that:
 \begin{equation*}
\left\{
 \begin{array}{ll}
 a\mp i=ib_1^2\pi_1\pi_3,\\
 a\pm i=-ib_2^2\pi_2\pi_4,
 \end{array}\right.  \text{  or }
 \left\{
 \begin{array}{ll}
 a\mp i=ib_1^2\pi_2\pi_3,\\
 a\pm i=-ib_2^2\pi_1\pi_4,
 \end{array}\right.  \text{  hence }
 \end{equation*}
 \begin{equation*}
 \begin{array}{ll}\sqrt{\varepsilon_{p_1p_2}}=b_1\sqrt{\pi_1\pi_3}+b_2\sqrt{\pi_2\pi_4}
 \text{\   or } \sqrt{\varepsilon_{p_1p_2}}=b_1\sqrt{\pi_2\pi_3}+b_2\sqrt{\pi_1\pi_4}, \end{array} \end{equation*}
 where  $p_1=\pi_1\pi_2$,  $p_2=\pi_3\pi_4$ and $\pi_j$ are in $\ZZ[i]$. Thus
 \begin{equation*}
 \begin{array}{ll}\sqrt{\pi_1\pi_3\varepsilon_{p_1p_2}}=b_1\pi_1\pi_3+b_2\sqrt{p_1p_2}
 \text{  or } \sqrt{\pi_2\pi_3\varepsilon_{p_1p_2}}=b_1\pi_2\pi_3+b_2\sqrt{p_1p_2}, \end{array} \end{equation*}
 so there exist $\alpha$, $\beta$ in $\KK_3$ such $(\pi_1\pi_3)=(\alpha^2)$ or $(\pi_2\pi_3)=(\beta^2)$.  This yields that
   $\mathcal{H}_1\mathcal{H}_3=(\alpha)$ or $\mathcal{H}_2\mathcal{H}_3=(\beta)$ i.e. $\kappa_{\KK_3/\kk}=\langle[\mathcal{H}_1\mathcal{H}_3]\rangle$ or $\kappa_{\KK_3/\kk}=\langle[\mathcal{H}_2\mathcal{H}_3]\rangle$.
On the other hand, $\h\hhh$ is not principal in $\kk$. In fact,   there exist $x$ and $y$ in $\ZZ$ such that $(\h\hhh)^2=(\pi_1\pi_3)=(x+iy)$, hence  $\sqrt{x^2+y^2}=\sqrt{p_1p_2}\not\in\kk$. Thus  \cite[Proposition 1]{AZT12-2} implies the result.  Similarly, we show that $\h\hhhh$, $\hh\hhhh$ and  $\hh\hhh$ are not principal in $\kk$.\\
\indent As $p_1p_2(x\pm1)$  is a square in $\NN$ (Lemma \ref{10:5}), so it is easy to see that  $\h\hh\hhh\hhhh$ is principal in $\kk$, hence  $\kappa_{\KK_3/\kk}=\langle[\h\hhh]\rangle$ or $\langle[\hh\hhh]\rangle$. Consequently
      $$\kappa_{\KK_3/\kk}=\left\{
 \begin{array}{ll}
\langle[\h\hh]\rangle=\langle[\hhh\hhhh]\rangle \text{ if } N(\varepsilon_{p_1p_2})=1,\\
\langle[\h\hhh]\rangle \text{ or } \langle[\hh\hhh]\rangle \text{ if }  N(\varepsilon_{p_1p_2})=-1.
 \end{array}\right.$$
 \indent From the Figure \ref{10:21}, we see that  $\KK_3/F_1$ and $\KK_3/F_2$ are ramified extensions, whereas    $\KK_3/\kk$  is not. Therefore, by the class field theory, we deduce  that  $ [\mathbf{C}l_2(\kk):N_{\KK_3/\kk}(\mathbf{C}l_2(\KK_3))]=2$, $\mathbf{C}l_2(F)=N_{\KK_3/F}(\mathbf{C}l_2(\KK_3))$ and  $\mathbf{C}l_2(F_1)=N_{\KK_3/F_1}(\mathbf{C}l_2(\KK_3))$, thus  Theorem  \ref{10:10} implies that $$N_{\KK_3/F_1}(\mathbf{C}l_2(\KK_3))=\langle[\mathfrak{2}_{F_1}], [I]\rangle.$$
  Hence there exist ideals  $\mathfrak{P}$  and  $\mathfrak{A}$ in $\KK_3$ such that $N_{\KK_3/F_1}(\mathfrak{P})\sim I$, $N_{\KK_3/F_1}(\mathfrak{A})\sim \mathfrak{2}_{F_1}$,   $N_{\KK_3/\kk}(\mathfrak{P})\in \mathbf{C}l_2(\kk)$ and  $N_{\KK_3/\kk}(\mathfrak{A})\in \mathbf{C}l_2(\kk)$. Note that  $\mathfrak{A}$ is an ideal in $\KK_3$ above $2$. We prove that  $N_{\KK_3/\kk}(\mathfrak{P})\sim \mathcal{H}_1\mathcal{H}_2$ and $N_{\KK_3/\kk}(\mathfrak{A})\sim \mathcal{H}_1\mathcal{H}_3$ or $\mathcal{H}_2\mathcal{H}_3$ (see Lemma \ref{10:19} below). Thus we claim that
     $$\left\{
 \begin{array}{ll}
 \mathfrak{P}^{2}\sim I, \text{ if } N(\varepsilon_{p_1p_2})=1,\\
\mathfrak{P}^{2}\sim \mathcal{H}_1\mathcal{H}_2I, \text{ if }  N(\varepsilon_{p_1p_2})=-1.
 \end{array}\right.$$ and
 $$\left\{
 \begin{array}{ll}
 \mathfrak{A}^{2}\sim \mathcal{H}_1\mathcal{H}_3\mathfrak{2}_{F_1}, \text{ if }  N(\varepsilon_{p_1p_2})=1,\\
 \mathfrak{A}^{2}\sim \mathfrak{2}_{F_1}, \text{ if } N(\varepsilon_{p_1p_2})=-1.
 \end{array}\right.$$
 Before showing this, note that in $\mathbf{C}l_2(F_1)$ and in the case where $N(\varepsilon_{p_1p_2})=1$ we have:  $\mathfrak{p}_1\sim\mathcal{P}_1\mathcal{P}_2\sim1$ and $\mathfrak{p}_2\sim\mathcal{P}_3\mathcal{P}_4\sim1$, hence $\mathcal{P}_1\sim\mathcal{P}_2$ and $\mathcal{P}_3\sim\mathcal{P}_4$. Therefore, in $\mathbf{C}l_2(\KK_3)$, we get  $\h\sim\hh$ and $\hhh\sim\hhhh$.

 To this end, let  $t$ and $s$  be the elements of order $2$ in $\mathrm{Gal}(\KK_3/\QQ(i))$ which fix  $F_1$ and  $\kk$ respectively. Using the identity $2+(1+t+s+ts)=(1+t)+(1+s)+(1+ts)$ of the group ring $\ZZ[Gal(F_1/\QQ)]$ and observing that $\QQ(i)$ and  $F_2$  have
odd class numbers we find:
$$\mathfrak{P}^2\sim \mathfrak{P}^{1+t}\mathfrak{P}^{1+s}\mathfrak{P}^{1+ts}\sim \mathcal{H}_1\mathcal{H}_2I \text{  and  } $$
$$\mathfrak{A}^2\sim \mathfrak{A}^{1+t}\mathfrak{A}^{1+s}\mathfrak{A}^{1+ts}\sim \mathcal{H}_1\mathcal{H}_3\mathfrak{2}_{F_1}\text{ or } \mathcal{H}_2\mathcal{H}_3\mathfrak{2}_{F_1}.$$
 As, in $\mathbf{C}l_2(\KK_3)$, we have $\mathcal{H}_1\mathcal{H}_2\sim 1$ if $N(\varepsilon_{p_1p_2})=1$ and $\mathcal{H}_1\mathcal{H}_3\sim 1$  or $\mathcal{H}_2\mathcal{H}_3\sim 1$ if $N(\varepsilon_{p_1p_2})=-1$, so the result claimed.  Thus Theorem \ref{10:10} implies that, in $\mathbf{C}l_2(\KK_3)$, we have:
    $$\left\{
 \begin{array}{ll}
 \mathfrak{P}^{2^{m}}\sim I^{2^{m-1}}\sim\mathfrak{p}_1\sim1 &\text{ if } N(\varepsilon_{p_1p_2})=1,\\
\mathfrak{P}^{2^{m}}\sim I^{2^{m-1}}\sim\mathfrak{p}_1\not\sim1 \text{ and }\mathfrak{P}^{2^{m+1}}\sim1  &\text{ if }  N(\varepsilon_{p_1p_2})=-1.
 \end{array}\right.$$ and
 $$\left\{
 \begin{array}{ll}
 \mathfrak{A}^{2^{n+2}}\sim \mathfrak{2}_{F_1}^{2^{n+1}}\sim1 & \text{ if }  N(\varepsilon_{p_1p_2})=1,\\
 \mathfrak{A}^{2^{n+1}}\sim \mathfrak{2}_{F_1}^{2^{n}}\not\sim1 \text{ and } \mathfrak{A}^{2^{n+2}}\sim1 &  \text{ if } N(\varepsilon_{p_1p_2})=-1.
 \end{array}\right.$$
Moreover, Lemma \ref{10:11} and  Theorem \ref{10:10} yield that:  \\
  - If $\left(\frac{p_1}{p_2}\right)=-1$ or $\left(\frac{p_1}{p_2}\right)=1$ and $N(\varepsilon_{p_1p_2})=-1$, then
    $$\left\{
 \begin{array}{ll}
 \mathfrak{P}^{2^{m}}\sim I^{2^{m-1}}\sim\mathfrak{2}_{F_1}^{2^{n}}\sim\mathfrak{A}^{2^{n+1}}\sim\mathfrak{p}_1\sim\h\hh,\text{ and }\\
    \h\sim\mathcal{P}_1\sim \mathfrak{2}_{F_1}^{2^{n-1}}I^{2^{m-2}}\sim\mathfrak{A}^{2^{n}}\mathfrak{P}^{2^{m-1}}.
 \end{array}\right.$$
- If $\left(\frac{p_1}{p_2}\right)=1$ and $N(\varepsilon_{p_1p_2})=1$, then
     $$\left\{
 \begin{array}{ll}
 \mathfrak{P}^{2^{m}}\sim I^{2^{m-1}}\sim\mathfrak{2}_{F_1}^{2^{n+1}}\sim\mathfrak{A}^{2^{n+2}}\sim\mathfrak{p}_1\sim\h\hh\sim1,\\
    \h\hhh\sim\mathcal{P}_1\mathcal{P}_3\sim \mathfrak{2}_{F_1}^{2^{n}}\sim\mathfrak{A}^{2^{n+1}},\\
    \h\sim\mathfrak{A}^{2^{n+1}}\mathfrak{P}^{2^{m-1}}\text{ and }\hhh\sim\mathfrak{P}^{2^{m-1}}\text{ or }
    \hhh\sim\mathfrak{A}^{2^{n+1}}\mathfrak{P}^{2^{m-1}}\text{ and }\h\sim\mathfrak{P}^{2^{m-1}}.
  \end{array}\right.$$
Finally, note that for all   $i\leq n$ and  $j\leq m-1$, we have  $\mathfrak{A}^{2^{i}}\mathfrak{P}^{2^{j}}\not\sim1$. As otherwise we would have, by applying the norm $N_{\KK_3/F_1}$,  $\mathfrak{2}_{F_1}^{2^{i}}\mathfrak{P}^{2^{j+1}}\sim1$, which is absurd.

 Taking into account Lemma \ref{10:12}, and since the $2$-rank of  $\KK_3$ is $2$ and its   $2$-class number is  $h(\KK_3)=2h(p_1p_2)h(-p_1p_2)$ (see Lemmas \ref{10:7}  and \ref{10:13}), so the results  that we have just prove, we get the following conclusion:\\
{ \em Conclusion}\\
\indent $\bullet$ If  $\left(\frac{p_1}{p_2}\right)=-1$, then  $\langle[\mathfrak{A}], [\mathfrak{P}]\rangle$  is a subgroup of $\mathbf{C}l_2(\KK_3)$ of type\\
 $(2^{m+1}, 2^{n+1})$,  thus  $$\mathbf{C}l_2(\KK_3)=\langle[\mathfrak{A}], [\mathfrak{P}]\rangle\simeq (2^{n+1}, 2^{m+1})=(2^{2}, 2^{m+1}).$$
\indent $\bullet$ If  $\left(\frac{p_1}{p_2}\right)=1$ and $N(\varepsilon_{p_1p_2})=-1$, then $\langle[\mathfrak{A}], [\mathfrak{P}]\rangle$ is a subgroup  of  $\mathbf{C}l_2(\KK_3)$ of type $(2^{\min(m, n+1)}, 2^{\max(m+1, n+2)})=(2^{m}, 2^{ n+2})$,  thus
 $$\mathbf{C}l_2(\KK_3)=\langle[\mathfrak{A}], [\mathfrak{P}]\rangle\sim (2^{m}, 2^{ n+2})=(2^{2}, 2^{ n+2}).$$
\indent $\bullet$ If   $\left(\frac{p_1}{p_2}\right)=1$ and $N(\varepsilon_{p_1p_2})=1$, then $\langle[\mathfrak{A}], [\mathfrak{P}]\rangle$ is a subgroup of  $\mathbf{C}l_2(\KK_3)$ of type  $(2^{m}, 2^{n+2})$,  thus
 $$\mathbf{C}l_2(\KK_3)=\langle[\mathfrak{A}], [\mathfrak{P}]\rangle\simeq(2^{m}, 2^{n+2}).$$
 Moreover
 $$
 \mathbf{C}l_2(\KK_3)\simeq
 \left\{
 \begin{array}{ll}
 (2^{m}, 2^{3}) \text{ if } \left(\frac{p_1}{p_2}\right)_4\left(\frac{p_2}{p_1}\right)_4=-1,\\
 (2^{2}, 2^{n+2}) \text{ if } \left(\frac{p_1}{p_2}\right)_4=\left(\frac{p_2}{p_1}\right)_4=1
 \end{array}\right.
 $$
  (4) {\bf Computation of $\mathrm{Gal}(\kk_2^{(2)}/\kk)$}.
  Put $L=\kk_2^{(2)}$, the Hilbert $2$-class field of $\kk$, and denote by  $\displaystyle\left(\frac{L/\KK_3}{P}\right)$ the Artin  symbol for  the normal extension   $L/\KK_3$, then $\sigma=\displaystyle\left(\frac{L/\KK_3}{\mathfrak{P}}\right)$ and $\tau=\displaystyle\left(\frac{L/\KK_3}{\mathcal{\mathfrak{A}}}\right)$ generate the  abelian subgroup  $Gal(L/\KK_3)$ of $G=Gal(L/\kk)$. Put also  $\rho=\displaystyle\left(\frac{L/\kk}{\mathcal{H}_1}\right)$, then $\rho$  restricts to the nontrivial automorphism of  $\KK_3/\kk$, since  $\mathcal{H}_1$ is not norm in  $\KK_3/\kk$ ($\h$ remains inert in  $\KK_3$). Thus
 $$G=\mathrm{Gal}(L/\kk)=\langle\rho, \tau, \sigma\rangle.$$
For the Artin symbol properties we can see  \cite{La-94}.\\
Note finally that  $|G|=2|\mathrm{Gal}(L/\KK_3)|=2^{n+m+3} $.

To continue we need the following result:
\begin{lem}\label{10:19}   In $\mathbf{C}l_2(\kk)$, we have  $N_{\KK_3/\kk}(\mathfrak{P})\sim \mathcal{H}_1\mathcal{H}_2$ and  $N_{\KK_3/\kk}(\mathfrak{A})\sim \mathcal{H}_1\mathcal{H}_3$ or $\mathcal{H}_2\mathcal{H}_3$. Moreover
\begin{enumerate}[\upshape\indent(1)]
\item Assume  $\left(\frac{p_1}{p_2}\right)=-1$ and  put $\mathrm{I}= \left(\frac{p_1p_2}{2}\right)_4\left(\frac{2p_1}{p_2}\right)_4\left(\frac{2p_2}{p_1}\right)_4$.
\begin{enumerate}[\upshape\indent(i)]
  \item  If  $\mathrm{I}=\left(\frac{\pi_1}{\pi_3}\right)$, then $N_{\KK_3/\kk}(\mathfrak{A})\sim \mathcal{H}_1\mathcal{H}_3$.
  \item If $\mathrm{I}=-\left(\frac{\pi_1}{\pi_3}\right)$, then  $N_{\KK_3/\kk}(\mathfrak{A})\sim \mathcal{H}_2\mathcal{H}_3$.
\end{enumerate}
\item Assume  $\left(\frac{p_1}{p_2}\right)=1$ and  put $\beta=\left(\frac{1+i}{\pi_1}\right)\left(\frac{1+i}{\pi_3}\right)$.
\begin{enumerate}[\upshape\indent(i)]
  \item  If  $\beta=1$, then $N_{\KK_3/\kk}(\mathfrak{A})\sim \mathcal{H}_1\mathcal{H}_3$.
  \item If $\beta=-1$, then  $N_{\KK_3/\kk}(\mathfrak{A})\sim \mathcal{H}_2\mathcal{H}_3$.
\end{enumerate}
\end{enumerate}
\end{lem}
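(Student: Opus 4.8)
The plan is to work inside the Klein four extension $\KK_3/\QQ(i)$, whose three intermediate fields are $\kk$, $F_1=\QQ(\sqrt{p_1p_2},i)$ and $F_2=\QQ(\sqrt q,i)$, and whose Galois group $\mathrm{Gal}(\KK_3/\QQ(i))=\{1,t,s,ts\}$ has $t$ fixing $F_1$, $s$ fixing $\kk$ and $ts$ fixing $F_2$. The inputs are: $N_{\KK_3/F_1}(\mathfrak{P})\sim I$ and $N_{\KK_3/F_1}(\mathfrak{A})\sim\mathfrak{2}_{F_1}$ (by the choice of $\mathfrak{P},\mathfrak{A}$), $\mathbf{C}l_2(F_1)=\langle[\mathfrak{2}_{F_1}],[I]\rangle$ with $I^{2^{m-1}}\sim\mathfrak{p}_1$ or $\mathfrak{2}\mathfrak{p}_1$ (Theorem \ref{10:10}), $h(\QQ(i))$ odd, and $h(F_2)$ odd (the $F_2$-computation from the proof of Lemma \ref{10:13}). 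First I would use the group-ring identity $2+(1+t+s+ts)=(1+t)+(1+s)+(1+ts)$ in $\ZZ[\mathrm{Gal}(\KK_3/\QQ(i))]$: applied to an ideal $\mathfrak{X}$ of $\KK_3$, and using that $N_{\KK_3/\QQ(i)}(\mathfrak{X})$ and $N_{\KK_3/F_2}(\mathfrak{X})$ become principal in $\KK_3$, it gives $\mathfrak{X}^2\sim N_{\KK_3/F_1}(\mathfrak{X})\cdot N_{\KK_3/\kk}(\mathfrak{X})$ in $\mathbf{C}l_2(\KK_3)$. Taking $\mathfrak{X}=\mathfrak{P}$ and $\mathfrak{X}=\mathfrak{A}$ therefore reduces the whole lemma to identifying the two classes $N_{\KK_3/\kk}(\mathfrak{P})$ and $N_{\KK_3/\kk}(\mathfrak{A})$ inside $\mathbf{C}l_2(\kk)$.

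Next I would locate those two classes. Both lie in $N_{\KK_3/\kk}(\mathbf{C}l_2(\KK_3))=\langle[\mathcal{H}_1\mathcal{H}_2],[\mathcal{H}_1\mathcal{H}_3]\rangle$, the index-$2$ subgroup already pinned down in the proof of Theorem \ref{10:2} from $\left(\frac{q}{\mathcal{H}_i}\right)=\left(\frac{q}{p_j}\right)=1$; so each is one of $1,\ \mathcal{H}_1\mathcal{H}_2,\ \mathcal{H}_1\mathcal{H}_3,\ \mathcal{H}_2\mathcal{H}_3$. For $\mathfrak{P}$: its norm to $F_1$ is the class $I$, whose $2^{m-1}$-th power is $\mathfrak{p}_1$ (Theorem \ref{10:10}), and $\mathfrak{p}_1\mathcal{O}_{\KK_3}=\mathcal{H}_1\mathcal{H}_2\mathcal{O}_{\KK_3}$ by the inertia of $\mathcal{H}_j,\mathcal{P}_j$ in $\KK_3$; hence $N_{\KK_3/\kk}(\mathfrak{P})$ is the unique class of $\langle[\mathcal{H}_1\mathcal{H}_2],[\mathcal{H}_1\mathcal{H}_3]\rangle$ that is "supported over $p_1$", provided it is nontrivial — and it cannot be trivial, since otherwise $[\mathfrak{P}]$ would have order $2^{m+1}$, incompatible with $\langle[\mathfrak{A}],[\mathfrak{P}]\rangle=\mathbf{C}l_2(\KK_3)$ being of rank $2$ and order $2^{n+m+2}$; so $N_{\KK_3/\kk}(\mathfrak{P})\sim\mathcal{H}_1\mathcal{H}_2$. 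For $\mathfrak{A}$: it lies above $2$, and since $(1+i)$ is inert in $\kk/\QQ(i)$ the ideal $N_{\KK_3/\kk}(\mathfrak{A})$ is, up to the principal class, the class of the unique prime $\mathfrak{2}_\kk$ of $\kk$ above $2$; this class is nontrivial (again because $\langle[\mathfrak{A}],[\mathfrak{P}]\rangle$ fills $\mathbf{C}l_2(\KK_3)$) and distinct from $[\mathcal{H}_1\mathcal{H}_2]$ (which is $N_{\KK_3/\kk}(\mathfrak{P})$), so $N_{\KK_3/\kk}(\mathfrak{A})\sim\mathcal{H}_1\mathcal{H}_3$ or $\mathcal{H}_2\mathcal{H}_3$. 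This establishes the unconditional part of the lemma, and simultaneously the displays $\mathfrak{P}^2\sim\mathcal{H}_1\mathcal{H}_2I$ and $\mathfrak{A}^2\sim\mathfrak{2}_{F_1}\cdot(\mathcal{H}_1\mathcal{H}_3\ \text{or}\ \mathcal{H}_2\mathcal{H}_3)$ invoked in the proof of Theorem \ref{10:2}.

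The remaining, and hardest, task is the refinement in parts (1) and (2): deciding which of the conjugate primes $\mathcal{H}_1,\mathcal{H}_2$ above $p_1$ occurs in $N_{\KK_3/\kk}(\mathfrak{A})\sim\mathfrak{2}_\kk$. The idea is that this is detected by a quadratic residue symbol at $\mathfrak{2}_\kk$: applying Lemma \ref{10:17} to the unramified quadratic extensions $\KK_4=\kk(\sqrt{\pi_1\pi_3})$ and $\KK_7=\kk(\sqrt{\pi_2\pi_4})$ (or $\KK_5,\KK_6$), whose capitulation kernels $\langle[\mathcal{H}_1],[\mathcal{H}_3]\rangle$ resp. $\langle[\mathcal{H}_2],[\mathcal{H}_3]\rangle$ are known, the class $[\mathfrak{2}_\kk]$ lies in $\langle[\mathcal{H}_1],[\mathcal{H}_3]\rangle$ or in $\langle[\mathcal{H}_2],[\mathcal{H}_3]\rangle$ according to whether $\left(\frac{\pi_1\pi_3}{\mathfrak{2}_\kk}\right)=1$ or $=-1$, and by Lemma \ref{10:17} this Artin-symbol value, pushed down to $\QQ(i)$ via $\mathfrak{2}_\kk\mid(1+i)$, equals $\left(\frac{1+i}{\pi_1}\right)\left(\frac{1+i}{\pi_3}\right)=\beta$ when $\left(\frac{p_1}{p_2}\right)=1$. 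When $\left(\frac{p_1}{p_2}\right)=-1$ one argues the same way but now uses the extra relation $I^{2^{m-1}}\sim\mathfrak{2}\mathfrak{p}_1$ together with Kaplan's class-number formula for $h(-p_1p_2)$ (\cite{Ka76}), which re-expresses the discriminating sign as the product of the rational quartic symbols $\mathrm{I}=\left(\frac{p_1p_2}{2}\right)_4\left(\frac{2p_1}{p_2}\right)_4\left(\frac{2p_2}{p_1}\right)_4$ with $\left(\frac{\pi_1}{\pi_3}\right)$. The main obstacle is the bookkeeping: tracking, across the case split on $\left(\frac{p_1}{p_2}\right)$ and $N(\varepsilon_{p_1p_2})$ and the chosen labelling of $\pi_1,\ldots,\pi_4$, which conjugate prime gets singled out, and translating cleanly between the quadratic Hilbert symbols over $\QQ(i)$ and the rational quartic residue symbols via the reciprocity laws of \cite{Ka76}, \cite{Lm94}, \cite{Lm97} (and, for $\left(\frac{p_1}{p_2}\right)=1$, Lemma \ref{10:9}). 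Once that translation is in place, the norm and group-ring steps above are routine.
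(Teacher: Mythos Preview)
Your outline has the right architecture (the Klein-four group-ring identity, locating both norms inside $N_{\KK_3/\kk}(\mathbf{C}l_2(\KK_3))=\langle[\mathcal{H}_1\mathcal{H}_2],[\mathcal{H}_1\mathcal{H}_3]\rangle$, and then a residue-symbol computation at the prime above $2$), but the unconditional part contains a genuine gap.

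\medskip
\textbf{The gap: why $N_{\KK_3/\kk}(\mathfrak{P})\sim\mathcal{H}_1\mathcal{H}_2$.} Your sentence ``hence $N_{\KK_3/\kk}(\mathfrak{P})$ is the unique class \ldots\ that is `supported over $p_1$''' is not an argument. Knowing $N_{\KK_3/F_1}(\mathfrak{P})\sim I$ and $I^{2^{m-1}}\sim\mathfrak{p}_1$ constrains powers of $\mathfrak{P}$ in $\mathbf{C}l_2(\KK_3)$ via the group-ring identity, but it says nothing that singles out $[\mathcal{H}_1\mathcal{H}_2]$ over $[\mathcal{H}_1\mathcal{H}_3]$ or $[\mathcal{H}_2\mathcal{H}_3]$ for the norm to $\kk$; these three classes are not distinguished by any ``support'' criterion visible from the $F_1$-side. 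The paper closes this by a different mechanism: choose (Chebotarev) a prime $\mathfrak{R}$ with $[\mathfrak{R}]=[\mathfrak{P}]$, push $\mathscr{R}=N_{\KK_3/\kk}(\mathfrak{R})$ down to the real field $k_0=\QQ(\sqrt{p_1p_2q})$, and use that $\mathbf{C}l_2(k_0)=\langle[P_1],[P_2]\rangle$ together with the explicit norm equations $\pm rq=X^2-y^2p_1p_2q$, $\pm rp_1p_2=U^2-v^2p_1p_2q$ to force a Legendre-symbol contradiction with $\bigl(\tfrac{-p_1p_2}{q}\bigr)=-1$ when $\mathscr{R}\sim\mathcal{H}_1\mathcal{H}_3$ (and symmetrically for $\mathcal{H}_2\mathcal{H}_3$). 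Nontriviality is then argued from the Artin map: $\sigma G'$ corresponds to $[N_{\KK_3/\kk}(\mathfrak{P})]$, and $\sigma\notin G'$ because $\mathrm{Gal}(L/\KK_3)/G'\simeq(2,2)$ with $\sigma,\tau$ as generators. Your order-counting version of nontriviality is muddled and should be replaced by this.

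\medskip
\textbf{A secondary slip for $\mathfrak{A}$.} The claim that $(1+i)$ is inert in $\kk/\QQ(i)$ is false when $q\equiv 7\pmod 8$; there are then two primes $\mathcal{H}_0,\mathcal{H}_0'$ of $\kk$ above $2$, and one must rule out $\mathcal{H}_0\sim\mathcal{H}_1\mathcal{H}_2$ separately in the two cases $q\equiv 3$ and $q\equiv 7\pmod 8$. The paper does this by checking, via Lemma~\ref{10:5}, that the relevant combinations $p_1(x\pm1)$, $2p_1(x\pm1)$, $x\pm1$ are not squares in~$\NN$. Your shortcut ``distinct from $[\mathcal{H}_1\mathcal{H}_2]$ (which is $N_{\KK_3/\kk}(\mathfrak{P})$)'' also needs the $\sigma G'\ne\tau G'$ observation to be a proof.

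\medskip
\textbf{On the refinement (1)--(2).} Your idea---decide between $\mathcal{H}_1\mathcal{H}_3$ and $\mathcal{H}_2\mathcal{H}_3$ by evaluating a residue symbol at the prime above $2$ via Lemma~\ref{10:17}, and translate to $\left(\tfrac{1+i}{\pi_1}\right)\left(\tfrac{1+i}{\pi_3}\right)$ and, in the $\bigl(\tfrac{p_1}{p_2}\bigr)=-1$ case, to $\mathrm{I}\cdot\bigl(\tfrac{\pi_1}{\pi_3}\bigr)$ through \cite[Prop.~1]{AZT14-3}---is exactly the paper's route. One caution: you invoke the \emph{capitulation kernels} of $\KK_4,\KK_7$, but those are computed later using this very lemma; to avoid circularity you must use the \emph{norm groups} $N_4,N_7$ from Table~\ref{26}, which are obtained independently from Lemma~\ref{10:17} and Lemma~\ref{10:20}.
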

\begin{proof} Recall that  $N_{\KK_3/\kk}(\mathbf{C}l_2(\KK_3))=\langle[\mathcal{H}_1\mathcal{H}_2], [\mathcal{H}_1\mathcal{H}_3]\rangle$. Choose a prime ideal $\mathfrak{R}$ in $\KK_3$ such that $[\mathfrak{R}]=[\mathfrak{P}]$, this  is always
possible by Chebotarev's theorem,  thus   $\mathscr{R}=N_{\KK_3/\kk}(\mathfrak{R})$ is a prime ideal in $\kk$. If $N_{\KK_3/\kk}(\mathfrak{P})\sim \mathcal{H}_1\mathcal{H}_3$, then $\mathscr{R}\sim\mathcal{H}_1\mathcal{H}_3$ (equivalence in $\mathbf{C}l_2(\kk)$). Hence the prime ideal $\mathfrak{r}=N_{\kk/k_0}(\mathscr{R})$ of  $k_0$ is equivalent, in  $\mathbf{C}l_2(k_0)$, to $\widetilde{\mathfrak{2}}\sim P_1P_2\sim \widetilde{\mathfrak{q}}$. It should be noted that  $\mathbf{C}l_2(k_0)$ is of  type $(2, 2)$ and it is generated by $P_1$ and $P_2$, the  prime ideals of $k_0$ above $p_1$ and $p_2$ respectively.
  Therefore  $\mathfrak{r}\sim \widetilde{\mathfrak{q}}$ and  $\mathfrak{r}\sim P_1P_2$, these imply that $\pm rq=X^2-y^2p_1p_2q$ and  $\pm rp_1p_2=U^2-v^2p_1p_2q$, with some $X$, $y$,  $U$ and  $v$ in $\ZZ$.  Putting  $X=xq$ and $U=up_1p_2$, we get $\pm r=x^2q-y^2p_1p_2$ and $\pm r=u^2p_1p_2-v^2q$, from which we deduce that  $(\frac{\pm r}{q})=(\frac{-p_1p_2}{q})=-1$ and $(\frac{\pm r}{q})=1$, which is a contradiction. Similarly, we show that  $N_{\KK_3/\kk}(\mathfrak{P})\not\sim \mathcal{H}_2\mathcal{H}_3$. Finally,  the equivalence $N_{\KK_3/\kk}(\mathfrak{P})\sim1$ can
not occur since the order of $\sigma$ is strictly greater than 1. Consequently  $N_{\KK_3/\kk}(\mathfrak{P})\sim \mathcal{H}_1\mathcal{H}_2$.

 Let $\mathcal{H}_0$ denote a prime ideal of  $\kk$ above  $2$. If   $N_{\KK_3/\kk}(\mathfrak{A})\sim\mathcal{H}_1\mathcal{H}_2$, then\\ $\mathcal{H}_0\sim\mathcal{H}_1\mathcal{H}_2$, thus  $\mathcal{H}_0\mathcal{H}_1\mathcal{H}_2\sim1$. Hence $\mathcal{H}_0\mathcal{H}_1\mathcal{H}_2=(\alpha)$, with some  $\alpha\in\kk$. We have two cases to distinguish:\\
 \indent 1\up{st} case: If $q\equiv3\pmod8$, then there is only one prime ideal $\mathcal{H}_0$  in $\kk$ above $2$, thus  $(\mathcal{H}_0\mathcal{H}_1\mathcal{H}_2)^2=(2p_1)$. Therefore   $\mathcal{H}_0\mathcal{H}_1\mathcal{H}_2$ is principal in $\kk$ if and only if  $p_1(x\pm1)$ or $2p_1(x\pm1)$ is a square in $\NN$ (see \cite[Remark 1]{AZT12-2}), which is not the case (Lemma \ref{10:5}).\\
  \indent 2\up{nd} case: If $q\equiv7\pmod8$, then there are two prime ideals    $\mathcal{H}_0$ and $\mathcal{H}_0'$ in  $\kk$ above $2$. Thus   $\mathcal{H}_0\mathcal{H}_1\mathcal{H}_2=(\alpha)$ implies, by applying the norm $N_{\kk/k_0}$, that $P_0=(\alpha')$, where $P_0$ is the prime ideal of  $k_0$ above $2$, hence $(2)=(\alpha'^2)$, this in turn yields that $2\varepsilon_{p_1p_2q}$ is a square in  $\QQ(\sqrt{p_1p_2q})$ i.e.  $x\pm1$   is a square in  $\NN$, which is not the case (Lemma \ref{10:5}).  Finally,  the equivalence $N_{\KK_3/\kk}(\mathfrak{A})\sim1$ can
not occur since the order of $\tau$ is strictly greater than 1. From which we conclude that  $N_{\KK_3/\kk}(\mathfrak{A})\sim \mathcal{H}_1\mathcal{H}_3\text{ or } \mathcal{H}_2\mathcal{H}_3.$\\
\indent Suppose that $\mathrm{I}=-1$ and $\left(\frac{\pi_1}{\pi_3}\right)=1$. If $N_{\KK_3/\kk}(\mathfrak{A})\sim \mathcal{H}_1\mathcal{H}_3$, then\\ $N_{\kk/\QQ(i)}(N_{\KK_3/\kk}(\mathfrak{A}))\sim \pi_1\pi_3.$ On the other hand, $\mathfrak{A}$ is a prime ideal of $\KK_3$ above $2$, thus $N_{\kk/\QQ(i)}(N_{\KK_3/\kk}(\mathfrak{A}))\sim 1+i$. Hence $\pi_1\pi_3\sim 1+i$. Therefore Hilbert symbol properties and Lemma \ref{10:17} imply that
$\left(\frac{\pi_2\pi_4}{\mathcal{H}_1\mathcal{H}_3}\right)=\left(\frac{\pi_2\pi_4}{\pi_1\pi_3}\right)=\left(\frac{\pi_2\pi_4}{1+i}\right)=
\left(\frac{1+i}{\pi_2}\right)\left(\frac{1+i}{\pi_4}\right).$
Thus Lemma \ref{10:20} (see below)  yields that
  $\left(\frac{1+i}{\pi_1}\right)\left(\frac{1+i}{\pi_3}\right)=1.$
Which is absurd, since, according to \cite[Proposition 1]{AZT14-3}, $\mathrm{I}=\left(\frac{\pi_1}{\pi_3}\right)\left(\frac{1+i}{\pi_1}\right)\left(\frac{1+i}{\pi_3}\right)$. The other assertions are proved similarly using .
\end{proof}
We can now establish the following relations (equivalences are in $\mathbf{C}l_2(\KK_3)$):\\
$\bullet$  $[\tau, \sigma]=1$.\\
$\bullet$ $\rho^2=\displaystyle\left(\frac{L/\kk}{\mathcal{H}_1^2}\right)=
\displaystyle\left(\frac{L/\kk}{N_{\KK_3/\kk}(\mathcal{H}_1)}\right)=\displaystyle\left(\frac{L/\KK_3}{\mathcal{H}_1}\right)$, thus $\rho^4=1$ .\\
$\bullet$ $\tau\rho^{-1}\tau\rho=\displaystyle\left(\frac{L/\KK_3}{\mathfrak{A}^{1+\rho}}\right)=
\left\{
 \begin{array}{ll}
 1 &\text{  if } N(\varepsilon_{p_1p_2})=-1,\\
 \tau^{2^{n+1}}& \text{  if }  N(\varepsilon_{p_1p_2})=1,
 \end{array}\right.$\\
     because $\mathfrak{A}^{1+\rho}=N_{\KK_3/\kk}(\mathfrak{A})\sim \mathcal{H}_1\mathcal{H}_3\sim\mathfrak{A}^{2^{n+1}}$ if $N(\varepsilon_{p_1p_2})=1$, otherwise we get $\mathfrak{A}^{1+\rho}=N_{\KK_3/\kk}(\mathfrak{A})\sim \mathcal{H}_1\mathcal{H}_3$ or $\mathcal{H}_2\mathcal{H}_3$.    Since $\mathcal{H}_1\mathcal{H}_3$ and $\mathcal{H}_2\mathcal{H}_3$ are norms in  $\KK_3/\kk$, so, with out loss of generality, we can  assume that  $N_{\KK_3/\kk}(\mathfrak{A})\sim1$, because  $\kappa_{\KK_3/\kk}=\langle[\mathcal{H}_1\mathcal{H}_3]\rangle$ or $\langle[\mathcal{H}_2\mathcal{H}_3]\rangle$. Thus\\
   $[\tau, \rho]=\left\{
 \begin{array}{ll}
 \tau^{-1}\rho^{-1}\tau\rho=\tau^{-2} &\text{  if  } N(\varepsilon_{p_1p_2})=-1,\\
 \tau^{2^{n+1}-2} &\text{  if  }  N(\varepsilon_{p_1p_2})=1.
 \end{array}\right.$\\
$\bullet$ $\sigma\rho^{-1}\sigma\rho=\displaystyle\left(\frac{L/\KK_3}{\mathfrak{P}^{1+\rho}}\right)=
\left\{
 \begin{array}{ll}
 1 &\text{  if  } N(\varepsilon_{p_1p_2})=1,\\
 \sigma^{2^m} &\text{  if  }  N(\varepsilon_{p_1p_2})=-1;
 \end{array}\right.$
 since
  $\mathfrak{P}^{1+\rho}=N_{\KK_3/\kk}(\mathfrak{P})\sim \mathcal{H}_1\mathcal{H}_2\sim
  \left\{
 \begin{array}{ll}
 1 &\text{  if  } N(\varepsilon_{p_1p_2})=1,\\
\mathfrak{P}^{2^m} &\text{  if  }  N(\varepsilon_{p_1p_2})=-1.
 \end{array}\right.$
 Thus \\ $[\sigma, \rho]=
   \left\{
 \begin{array}{ll}
 \sigma^{-2} &\text{  if  } N(\varepsilon_{p_1p_2})=1,\\
\sigma^{2^m-2} &\text{  if  }  N(\varepsilon_{p_1p_2})=-1.
 \end{array}\right.$\\
$\bullet$ If $\left(\frac{p_1}{p_2}\right)=-1$, then $n=1$,  $m\geq2$ and\\
  $ \left\{
 \begin{array}{ll}
\rho^4=\sigma^{2^{m+1}}=\tau^{2^{n+2}}=\tau^{2^{3}}=1, \\
\sigma^{2^{m}}=\tau^{2^{n+1}}=\tau^{4},\\
 \rho^2=\tau^{2^{n}}\sigma^{2^{m-1}}=\tau^{2}\sigma^{2^{m-1}};
  \end{array}\right.$
    since $\mathfrak{A}^{2^{n+2}}\sim\mathfrak{P}^{2^{m+1}}\sim1$,  $\mathcal{H}_1\sim\mathfrak{A}^{2^{n}}\mathfrak{P}^{2^{m-1}}$ and $\mathfrak{A}^{2^{n+1}}\sim\mathfrak{P}^{2^{m}}$. Moreover
   $[\tau, \rho]=\tau^{-2}$ and $[\sigma, \rho]=\sigma^{2^m-2}.$\\
$\bullet$ If $\left(\frac{p_1}{p_2}\right)=1$ and $N(\varepsilon_{p_1p_2})=-1$, then $m=2$,  $n\geq2$ and \\
  $ \left\{
 \begin{array}{ll}
\rho^4=\tau^{2^{n+2}}=\sigma^{2^{m+1}}=\sigma^{2^{3}}=1, \\
\tau^{2^{n+1}}=\sigma^{2^{m}}=\sigma^{4},\\
 \rho^2=\tau^{2^{n}}\sigma^{2^{m-1}}=\tau^{2^{n}}\sigma^{2};
  \end{array}\right.$
    since $\mathfrak{A}^{2^{n+2}}\sim\mathfrak{P}^{2^{m+1}}\sim1$,  $\mathcal{H}_1\sim\mathfrak{A}^{2^{n}}\mathfrak{P}^{2^{m-1}}$ and  $\mathfrak{A}^{2^{n+1}}\sim\mathfrak{P}^{2^{m}}$. Moreover   $[\tau, \rho]=\tau^{-2}$ and  $[\sigma, \rho]=\sigma^{2}$.\\
$\bullet$ If  $\left(\frac{p_1}{p_2}\right)=1$ and  $N(\varepsilon_{p_1p_2})=1$, then $m\geq2$,  $n\geq1$  and \\
  $ \left\{
 \begin{array}{ll}
\rho^4=\tau^{2^{n+2}}=\sigma^{2^{m}}=1, \\
 \rho^2=\tau^{2^{n+1}}\sigma^{2^{m-1}} \text{ or } \rho^2=\sigma^{2^{m-1}};
  \end{array}\right.$
    since $\mathfrak{A}^{2^{n+2}}\sim\mathfrak{P}^{2^{m}}\sim1$ and  $\mathcal{H}_1\sim\mathfrak{A}^{2^{n+1}}\mathfrak{P}^{2^{m-1}}$ or $\mathcal{H}_1\sim\mathfrak{P}^{2^{m-1}}$. Moreover  $[\tau, \rho]=\tau^{2^{n+1}-2}$ and $[\sigma, \rho]=\sigma^{-2}$.

\indent (5) {\bf Types of  $\mathbf{C}l_2(\M)$.} We know that $[\tau,\sigma]=1$, $[\sigma, \rho]=\sigma^{-2}$ or $\sigma^{2^m-2}$ and $[\tau, \rho]=\tau^{-2}$ or $\tau^{2^{n+1}-2}$, and since  $\langle\sigma^{2^m-2}\rangle\simeq\langle\sigma^{-2}\rangle$ and  $\langle\tau^{2^{n+1}-2}\rangle\simeq\langle\tau^{-2}\rangle$, then  $G'\simeq\langle\sigma^2, \tau^2\rangle$, where $G'$ is the derived group of $G$,  hence\\
 $\mathbf{C}l_2(\kk^{(1)}_2)\simeq\left\{
   \begin{array}{ll}
   (2, 2^{m})  &\text{  if  } \left(\frac{p_1}{p_2}\right)=-1,\\
   (2, 2^{n+1}) &\text{  if  } \left(\frac{p_1}{p_2}\right)=1 \text{ and } N(\varepsilon_{p_1p_2})=-1,\\
   (2^{m-1}, 2^{2}) &\text{  if  } \left(\frac{p_1}{p_2}\right)=1,\  N(\varepsilon_{p_1p_2})=1 \text{ and } \left(\frac{p_1}{p_2}\right)_4\left(\frac{p_2}{p_1}\right)_4=-1,\\
   (2, 2^{n+1}) &\text{  if  } \left(\frac{p_1}{p_2}\right)=1,\  N(\varepsilon_{p_1p_2})=1 \text{ and } \left(\frac{p_1}{p_2}\right)_4=\left(\frac{p_2}{p_1}\right)_4=1.
   \end{array}\right.$
\indent (6) {\bf The coclass of $G$.} The lower central series of $G$ is defined inductively by $\gamma_1(G)=G$ and $\gamma_{i+1}(G)=[\gamma_i(G),G]$, that is the subgroup of $G$ generated by the set $\{[a, b]=a^{-1}b^{-1}ab/ a\in \gamma_i(G), b\in G\}$, so  the coclass of $G$ is defined to be $cc(G) = h-c$, where $|G|=2^h$ and  $c=c(G)$ is the nilpotency class of $G$, that is  the smallest positive integer $c$ satisfying  $\gamma_{c+1}(G)=1$. We easily get \\
$\gamma_1(G)=G$.\\
$\gamma_2(G)=G'=\langle\sigma^2, \tau^2\rangle$.\\
$\gamma_3(G)=[G',G]=\langle\sigma^4, \tau^4\rangle$.\\
Then  Proposition \ref{24}(6) (below) implies that  $\gamma_{j+1}(G)=[\gamma_j(G),G]=\langle\sigma^{2^{j}}, \tau^{2^{j}}\rangle$.\\
  If   $\left(\frac{p_1}{p_2}\right)=-1$, then $\gamma_{m+2}(G)=\langle\sigma^{2^{m+1}}, \tau^{2^{m+1}}\rangle=\langle1\rangle$ and  $\gamma_{m+1}(G)=\langle\sigma^{2^{m}}, \tau^{2^{m}}\rangle\neq\langle1\rangle$. Since $\mid G\mid=2^{n+m+3}$, we have $$c(G)=m+1\text{ and }cc(G)=n+m+3-m-1=3.$$
 If   $\left(\frac{p_1}{p_2}\right)=1$ and  $N(\varepsilon_{p_1p_2})=-1$, then  $\gamma_{n+3}(G)=\langle\sigma^{2^{n+2}}, \tau^{2^{n+2}}\rangle=\langle1\rangle$ and $\gamma_{n+2}(G)=\langle\sigma^{2^{n+1}}, \tau^{2^{n+1}}\rangle\neq\langle1\rangle$. As $\mid G\mid=2^{n+m+3}$, so $$c(G)=n+2\text{ and } cc(G) = n+m+3-n-2=3.$$
 If   $\left(\frac{p_1}{p_2}\right)=1$,  $N(\varepsilon_{p_1p_2})=1$ and  $\left(\frac{p_1}{p_2}\right)_4\left(\frac{p_2}{p_1}\right)_4=-1$, then $n=1$,  $m\geq3$,   $\gamma_{m+1}(G)=\langle\sigma^{2^{m}}, \tau^{2^{m}}\rangle=\langle1\rangle$ and  $\gamma_{m}(G)=\langle\sigma^{2^{m-1}}, \tau^{2^{m-1}}\rangle\neq\langle1\rangle$. As $\mid G\mid=2^{n+m+3}$, so $$c(G)=m\text{ and } cc(G) = n+m+3-m=4.$$
 If    $\left(\frac{p_1}{p_2}\right)=1$,  $N(\varepsilon_{p_1p_2})=1$ and  $\left(\frac{p_1}{p_2}\right)_4=\left(\frac{p_2}{p_1}\right)_4=1$, then  $m=2$,  $n\geq2$,   $\gamma_{n+3}(G)=\langle\sigma^{2^{n+2}}, \tau^{2^{n+2}}\rangle=\langle1\rangle$  and  $\gamma_{n+2}(G)=\langle\sigma^{2^{n+1}}, \tau^{2^{n+1}}\rangle\neq\langle1\rangle$. As $\mid G\mid=2^{n+m+3}$, so $$c(G)=n+2\text{ and } cc(G) = n+m+3-n-2=3.$$
 \subsection{Proof of  Theorems \ref{10:3} and \ref{10:4}}
  For this we need the following result.
\begin{propo}\label{24}
Let $G=\langle\sigma, \tau, \rho\rangle$ be the group defined above.
\begin{enumerate}[\rm\indent(1)]
  \item $\rho^{-1}\tau\rho=\left\{\begin{array}{ll}
                              \tau^{-1} &\text{  if  } N(\varepsilon_{p_1p_2})=-1,\\
                              \tau^{2^{n+1}-1} &\text{  if  } N(\varepsilon_{p_1p_2})=1.
                              \end{array}
                              \right.$
  \item $\rho^{-1}\sigma\rho=\sigma^{-1}.$
  \item $[\rho^2,\sigma]=[\rho^2,\tau]=1$.
  \item $(\sigma\tau\rho)^2=(\tau\rho)^2=\left\{\begin{array}{ll}
                              \rho^{2} &\text{  if  } N(\varepsilon_{p_1p_2})=-1,\\
                              \rho^2\tau^{2^{n+1}} &\text{  if  } N(\varepsilon_{p_1p_2})=1.
                              \end{array}
                              \right.$
  \item $(\sigma\rho)^2=\rho^{2}.$
  \item For all $r\in\NN^*$, we have $[\rho, \tau^{2^r}]=\tau^{2^{r+1}}$ and
                              $[\rho,\sigma^{2^r}]=
                              \sigma^{2^{r+1}}.$
\end{enumerate}
\end{propo}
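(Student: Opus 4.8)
The plan is to derive all six identities formally from the two presentations of $G$ recorded in Theorem~\ref{10:2}(4), treating the cases $N(\varepsilon_{p_1p_2})=-1$ and $N(\varepsilon_{p_1p_2})=1$ in parallel; nothing beyond those defining relations is needed. Throughout I would keep at hand the order relations $\rho^4=\sigma^{2^{m+1}}=\tau^{2^{n+2}}=1$, the relation $\sigma^{2^m}=\tau^{2^{n+1}}$, and the commutativity $[\tau,\sigma]=1$, reading every exponent modulo the pertinent power of $2$; the elementary fact $(2^{k}-1)^2\equiv 1\pmod{2^{k+1}}$ will be used several times. Items (1) and (2) are just the commutator relations of the presentation solved for a conjugate: from $[\rho,\tau]=\tau^2$ (resp. $[\tau,\rho]=\tau^{2^{n+1}-2}$) one reads $\rho^{-1}\tau\rho=\tau^{-1}$ (resp. $\rho^{-1}\tau\rho=\tau^{2^{n+1}-1}$), and solving the relation governing the action of $\rho$ on $\sigma$ likewise gives $\rho^{-1}\sigma\rho=\sigma^{-1}$. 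Applying conjugation by $\rho$ to these identities also yields $\rho\sigma\rho^{-1}=\sigma^{-1}$ and the corresponding expression for $\rho\tau\rho^{-1}$, which I will reuse throughout.

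For (3) I would conjugate the formulas of (1)--(2) once more by $\rho$: $\rho^{-2}\sigma\rho^{2}=\rho^{-1}(\rho^{-1}\sigma\rho)\rho=(\rho^{-1}\sigma\rho)^{-1}=\sigma$, whence $[\rho^2,\sigma]=1$; the same step applied to $\tau$ gives $\tau$ back --- directly when $N(\varepsilon_{p_1p_2})=-1$, and via $(2^{n+1}-1)^2\equiv1\pmod{2^{n+2}}$ when $N(\varepsilon_{p_1p_2})=1$ --- so $[\rho^2,\tau]=1$. Item (5) is then immediate: $(\sigma\rho)^2=\sigma(\rho\sigma\rho^{-1})\rho^2=\sigma\sigma^{-1}\rho^2=\rho^2$. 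For (4) I would first compute $(\tau\rho)^2=\tau(\rho\tau\rho^{-1})\rho^2$ by substituting the value of $\rho\tau\rho^{-1}$ coming from (1): in the norm $-1$ case this is $\tau^{-1}$, giving $\rho^2$, and in the norm $1$ case it is $\tau^{2^{n+1}-1}$, giving $\tau^{2^{n+1}}\rho^2=\rho^2\tau^{2^{n+1}}$ after moving $\tau^{2^{n+1}}$ past $\rho^2$ by (3). Then $(\sigma\tau\rho)^2=\sigma\tau(\rho\sigma\rho^{-1})(\rho\tau\rho^{-1})\rho^2=\sigma\tau\,\sigma^{-1}(\rho\tau\rho^{-1})\rho^2=\tau(\rho\tau\rho^{-1})\rho^2=(\tau\rho)^2$, the penultimate equality using $[\tau,\sigma]=1$ to slide $\sigma^{-1}$ past $\tau$ and cancel it against $\sigma$. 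Finally, for (6) the same mechanism gives $[\rho,\tau^{2^r}]=(\rho^{-1}\tau\rho)^{-2^r}\tau^{2^r}$, which equals $\tau^{2^{r+1}}$ once $\tau^{2^{n+2}}=1$ is used to discard the extraneous factor $\tau^{2^{n+1+r}}$ produced in the norm $1$ case (legitimate precisely because $r\ge 1$), and $[\rho,\sigma^{2^r}]=(\rho^{-1}\sigma\rho)^{-2^r}\sigma^{2^r}=\sigma^{2^{r+1}}$.

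The computations are all mechanical; the only places requiring attention are the bookkeeping of exponents modulo $2^{m+1}$ and $2^{n+2}$, and the cancellation in the $(\sigma\tau\rho)^2$ part of (4), where one must be careful to move the leftover $\sigma$ past $\tau$ (legal by $[\tau,\sigma]=1$) before it reaches $\rho$, and to carry the $\tau^{2^{n+1}}$ correction through $\rho^2$ using (3). I expect (2) to be the true linchpin: it is what makes $\rho$ act on $\sigma$ by inversion, and items (3), (4), (5) and the $\sigma$-part of (6) all rest on it, so I would verify it first and most carefully. Once the two presentations are granted, the rest follows by unwinding.
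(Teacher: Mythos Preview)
Your approach---derive everything mechanically from the presentations in Theorem~\ref{10:2}(4)---is exactly the paper's. The paper in fact only writes out item~(6), and does so by induction from $[\rho,\tau^2]=\tau^4$ rather than your one-shot formula $[\rho,\tau^{2^r}]=(\rho^{-1}\tau\rho)^{-2^r}\tau^{2^r}$; the content is the same and your version is if anything cleaner.

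One caveat, since you single out (2) as the linchpin to check most carefully: in the $N(\varepsilon_{p_1p_2})=-1$ presentation the relation is $[\sigma,\rho]=\sigma^{2^m-2}$, which unwinds to $\rho^{-1}\sigma\rho=\sigma^{2^m-1}$, not $\sigma^{-1}$. Since $\sigma$ has order $2^{m+1}$ and $\sigma^{2^m}=\tau^{2^{n+1}}\ne 1$, these differ by the central involution $\sigma^{2^m}$; correspondingly $(\sigma\rho)^2=\sigma^{2^m}\rho^2$ rather than $\rho^2$ in item~(5). So items~(2) and~(5) as literally stated hold only for $N=1$. This does not damage (3) or (6): your mechanism via $(2^m-1)^2\equiv 1\pmod{2^{m+1}}$ still yields $\rho^{-2}\sigma\rho^2=\sigma$ and $[\rho,\sigma^{2^r}]=\sigma^{2^{r+1}}$ for $r\ge 1$, and the downstream uses in the paper only need generators of $\langle\sigma^2\rangle$, for which $\sigma^{2}$ and $\sigma^{2-2^m}$ are interchangeable. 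But if you attempt to verify (2) ``first and most carefully'' in the $N=-1$ case you will find it does not come out as written; the statement in the paper is imprecise there, and your derivation should record $\rho^{-1}\sigma\rho=\sigma^{2^m-1}$ instead.
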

\begin{proof}
$(6)$ Since  $[\rho, \tau]=\left\{\begin{array}{ll}
                              \tau^{2} &\text{  if  } N(\varepsilon_{p_1p_2})=-1,\\
                               \tau^{2-2^{n+1}}&\text{  if  } N(\varepsilon_{p_1p_2})=1,
                              \end{array}
                              \right.$\\
so  $[\rho, \tau^2]=\left\{\begin{array}{ll}
                              \tau^{4} &\text{  if  } N(\varepsilon_{p_1p_2})=-1,\\
                               \tau^{2-2^{n+1}}\tau^{2-2^{n+1}}=\tau^{4}&\text{  if  } N(\varepsilon_{p_1p_2})=1.
                              \end{array}
                              \right.$\\
By induction, we show that for all $r\in\NN^*$,   $[\rho, \tau^{2^r}]=\tau^{2^{r+1}}$. Similarly, we get that $[\rho,\sigma^{2^r}]=
                              \sigma^{2^{r+1}}.$
\end{proof}
 The proof of  Theorems \ref{10:3} and \ref{10:4} consists of 3 parts. In the first part, we will compute $N_{\KK_j/\kk}(\mathbf{C}l_2(\KK_j))$, for all $1\leq j\leq7$.  In the second one,  we will determine the capitulation kernels $\kappa_{\KK_j}$ and the types of $\mathbf{C}l_2(\KK_j)$ and in the third one, we will determine the capitulation kernels $\kappa_{\LL_j}$ and the types of $\mathbf{C}l_2(\LL_j)$.

\subsubsection{ \bf  Norm class groups.} Let us compute $N_j=N_{\KK_j/\kk}(\mathbf{C}l_2(\KK_j))$,  the results are
summarized in the following table.
\footnotesize
 \begin{longtable}{| c | c | c | c |}
 \caption{Norm class groups}\label{26}\\
\hline
 $\KK_j$  & Conditions & $N_j$ for  $\left(\frac{p_1}{p_2}\right)=1$ & $N_j$ for  $\left(\frac{p_1}{p_2}\right)=-1$\\
\hline
\endfirsthead
\hline
 $\KK_j$ & Conditions & $N_j$ for  $\left(\frac{p_1}{p_2}\right)=1$ & $N_j$ for  $\left(\frac{p_1}{p_2}\right)=-1$\\
\hline
\endhead
 $\KK_1$ & &$\langle[\mathcal{H}_3], [\mathcal{H}_1\mathcal{H}_2]\rangle$ &  $\langle[\mathcal{H}_1], [\mathcal{H}_2]\rangle$\\ \hline
 $\KK_2$ & & $\langle[\mathcal{H}_1], [\mathcal{H}_2]\rangle$ & $\langle[\mathcal{H}_1\mathcal{H}_2], [\mathcal{H}_3]\rangle$\\ \hline
$\KK_3$ & & $\langle[\mathcal{H}_1\hhh], [\mathcal{H}_2\mathcal{H}_3]\rangle$ &  $\langle[\mathcal{H}_1\hhh], [\mathcal{H}_2\mathcal{H}_3]\rangle$\\ \hline
 &  $\pi=1$ & $\langle[\mathcal{H}_1], [\mathcal{H}_3]\rangle$  &  $\langle[\mathcal{H}_2], [\mathcal{H}_1\mathcal{H}_3]\rangle$  \\[-1ex]
 \raisebox{2ex}{$\KK_4$}
    & $\pi=-1$&  $\langle[\mathcal{H}_2], [\mathcal{H}_1\mathcal{H}_3]\rangle$  &  $\langle[\mathcal{H}_1], [\mathcal{H}_3]\rangle$  \\[1ex]\hline
 & $\pi=1$ &  $\langle[\mathcal{H}_1], [\mathcal{H}_2\mathcal{H}_3]\rangle$  &  $\langle[\mathcal{H}_1], [\mathcal{H}_2\hhh]\rangle$  \\[-1ex]
\raisebox{1.7ex}{$\KK_5$}
 & $\pi=-1$ &  $\langle[\mathcal{H}_2], [\mathcal{H}_3]\rangle$  &  $\langle[\mathcal{H}_2], [\mathcal{H}_3]\rangle$  \\[1ex]\hline
 & $\pi=1$ &  $\langle[\mathcal{H}_2], [\mathcal{H}_3]\rangle$ &  $\langle[\mathcal{H}_2], [\mathcal{H}_3]\rangle$ \\[-1ex]
\raisebox{2ex}{$\KK_6$}
   &  $\pi=-1$    &  $\langle[\mathcal{H}_1], [\mathcal{H}_2\hhh]\rangle$  &  $\langle[\mathcal{H}_1], [\mathcal{H}_2\mathcal{H}_3]\rangle$ \\[1ex]\hline
 &  $\pi=1$ &  $\langle[\mathcal{H}_2], [\mathcal{H}_1\hhh]\rangle$  &  $\langle[\mathcal{H}_1], [\mathcal{H}_3]\rangle$  \\[-1ex]
\raisebox{2ex}{$\KK_7$}
  &    $\pi=-1$   &  $\langle[\mathcal{H}_1], [\mathcal{H}_3]\rangle$  &  $\langle[\mathcal{H}_2], [\mathcal{H}_1\hhh]\rangle$  \\[1ex]\hline
\end{longtable}
\normalsize
To check the table entries we use Lemma \ref{10:17} and the following
results which are easy to prove.
\begin{lem}\label{10:20}
Let $p_1\equiv p_2\equiv1\pmod4$ be different primes. Put $p_1=\pi_1\pi_2$ and  $p_2=\pi_3\pi_4$, where $\pi_j\in\ZZ[i]$,  then
\begin{enumerate}[\rm\indent(i)]
  \item $\left(\frac{\pi_1}{\pi_2}\right)=\left(\frac{\pi_3}{\pi_4}\right)=
  \left\{\begin{array}{ll}
   1 &\text{  if  } p_1\equiv p_2\equiv1\pmod8,\\
   -1  &\text{  if  } p_1\equiv p_2\equiv5\pmod8
   \end{array}
 \right.$
  \item If  $\left(\frac{p_1}{p_2}\right)=1$, then $\left(\frac{\pi_1}{\pi_3}\right)=\left(\frac{\pi_2}{\pi_3}\right)=
\left(\frac{\pi_1}{\pi_4}\right)=\left(\frac{\pi_2}{\pi_4}\right)$.
  \item If  $\left(\frac{p_1}{p_2}\right)=-1$, then $\left(\frac{\pi_1}{\pi_3}\right)=\left(\frac{\pi_2}{\pi_4}\right)=-\left(\frac{\pi_2}{\pi_3}\right)=-\left(\frac{\pi_1}{\pi_4}\right)$.
  \item If  $\left(\frac{2}{p_1}\right)=1$, then $\left(\frac{1+i}{\pi_1}\right)=\left(\frac{1+i}{\pi_2}\right)$.
  \item If  $\left(\frac{2}{p_1}\right)=-1$, then $\left(\frac{1+i}{\pi_1}\right)=-\left(\frac{1+i}{\pi_2}\right)$.
\end{enumerate}
\end{lem}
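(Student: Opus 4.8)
The plan is to reduce all five assertions to the quadratic reciprocity law of $\ZZ[i]$ with its supplementary laws for $i$ and for $1+i$ (for the reciprocity laws of $\ZZ[i]$ see \cite{Lm00}), together with three routine properties of the quadratic residue symbol $\left(\frac{\cdot}{\cdot}\right)$ of $\ZZ[i]$, whose values lie in $\{\pm1\}$: it depends only on the ideal generated by the denominator; it is multiplicative in the numerator; and it obeys the conjugation rule $\left(\frac{\bar\alpha}{\bar\beta}\right)=\overline{\left(\frac{\alpha}{\beta}\right)}=\left(\frac{\alpha}{\beta}\right)$. I will also use that for a prime $\mathfrak{p}$ of $\ZZ[i]$ above a rational prime $p\equiv1\pmod4$ and a rational integer $m$ prime to $p$ one has $\left(\frac{m}{\mathfrak{p}}\right)=\left(\frac{m}{p}\right)$ (Legendre symbol), that $\left(\frac{-1}{\mathfrak{p}}\right)=1$, and that $\left(\frac{i}{\mathfrak{p}}\right)=(-1)^{(p-1)/4}=\left(\frac{2}{p}\right)$, the last identity recording that $2$ is a square mod $p$ exactly when $p\equiv1\pmod8$. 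I keep the normalisation of the excerpt, $\pi_1=e+2fi$, $\pi_2=e-2fi$ with $p_1=\pi_1\pi_2=e^2+4f^2$, and likewise $\pi_3\pi_4=p_2$; this honest factorisation, not merely up to units, is what makes the statements well-posed, since the symbol is not invariant under a unit change of generator in the numerator.

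First I would prove (i). From $\pi_1+\pi_2=2e$ one gets $\pi_1\equiv2e\pmod{\pi_2}$, so $\left(\frac{\pi_1}{\pi_2}\right)=\left(\frac{2e}{p_1}\right)=\left(\frac{2}{p_1}\right)\left(\frac{e}{p_1}\right)$, where $e$ is prime to $p_1$ since $p_1>e$ is prime. The crux is the fact that $\left(\frac{e}{p_1}\right)=1$: as $e$ is odd it suffices to treat a prime $\ell\mid e$, and then $p_1=e^2+4f^2\equiv(2f)^2\not\equiv0\pmod{\ell}$, so $\left(\frac{p_1}{\ell}\right)=1$ and hence $\left(\frac{\ell}{p_1}\right)=\left(\frac{p_1}{\ell}\right)=1$ by ordinary reciprocity, using $p_1\equiv1\pmod4$. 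Therefore $\left(\frac{\pi_1}{\pi_2}\right)=\left(\frac{2}{p_1}\right)$, which is $1$ precisely when $p_1\equiv1\pmod8$; the identical computation gives $\left(\frac{\pi_3}{\pi_4}\right)=\left(\frac{2}{p_2}\right)$.

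Next, for (ii) and (iii), I would set $A=\left(\frac{\pi_1}{\pi_3}\right)$, $B=\left(\frac{\pi_2}{\pi_3}\right)$, $C=\left(\frac{\pi_1}{\pi_4}\right)$, $D=\left(\frac{\pi_2}{\pi_4}\right)$. The conjugation rule, with $\bar\pi_1=\pi_2$ and $\bar\pi_3=\pi_4$, gives $A=D$ and $B=C$, while multiplicativity in the numerator together with $\pi_1\pi_2=p_1$ gives $AB=\left(\frac{p_1}{\pi_3}\right)=\left(\frac{p_1}{p_2}\right)$. Hence if $\left(\frac{p_1}{p_2}\right)=1$ then $A=B$, so $A=B=C=D$, which is (ii); and if $\left(\frac{p_1}{p_2}\right)=-1$ then $A=-B$, so $A=D=-B=-C$, which is (iii).

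Finally, for (iv) and (v), the conjugation rule gives $\left(\frac{1+i}{\pi_2}\right)=\left(\frac{\overline{1+i}}{\bar\pi_2}\right)=\left(\frac{1-i}{\pi_1}\right)$, and from $1-i=-i(1+i)$ and multiplicativity, $\left(\frac{1-i}{\pi_1}\right)=\left(\frac{-1}{\pi_1}\right)\left(\frac{i}{\pi_1}\right)\left(\frac{1+i}{\pi_1}\right)=(-1)^{(p_1-1)/4}\left(\frac{1+i}{\pi_1}\right)=\left(\frac{2}{p_1}\right)\left(\frac{1+i}{\pi_1}\right)$. Thus $\left(\frac{1+i}{\pi_2}\right)$ equals $\left(\frac{1+i}{\pi_1}\right)$ when $\left(\frac{2}{p_1}\right)=1$ and its negative when $\left(\frac{2}{p_1}\right)=-1$, which is exactly (iv) and (v), and likewise for $\pi_3,\pi_4$. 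No single step is a real obstacle here; the only things one must watch are the normalisation flagged in the first paragraph and the ramified prime $1+i$, for which multiplicativity in the numerator and the conjugation rule are still available (since $\pi_1$ is coprime to $2$) but for which one must not invoke the main reciprocity law for $\left(\frac{1+i}{\pi_1}\right)$ directly.
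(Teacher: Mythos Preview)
Your proof is correct. The paper itself does not actually prove this lemma---it merely introduces it with the phrase ``the following results which are easy to prove'' and states the five assertions without justification. Your argument via the conjugation rule, multiplicativity, and the supplementary laws for $i$ and $-1$ in $\ZZ[i]$ is exactly the kind of routine verification the authors had in mind, and your handling of part (i) (reducing $\left(\frac{\pi_1}{\pi_2}\right)$ to $\left(\frac{2}{p_1}\right)\left(\frac{e}{p_1}\right)$ and then showing $\left(\frac{e}{p_1}\right)=1$ via ordinary reciprocity applied to each odd prime factor of $e$) is clean and complete.
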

Compute $N_j$ in a few cases keeping in mind that   $\mathcal{H}_1$, $\mathcal{H}_2$ and   $\mathcal{H}_3$ are unramified prime ideals in  $\KK_j/\kk$.\\
 $\bullet$ Take as a first example:  $\KK_1=\kk(\sqrt{p_1})=\kk(\sqrt{p_2q})= \QQ(\sqrt p_1, \sqrt{p_2q}, i)$. As  $N_1=
  \{[\mathcal{H}]\in\mathbf{C}l_2(\kk)/\displaystyle\left(\frac{\alpha}{\mathcal{H}}\right)=1\}$, so for all $j\in\{1, 2\}$ we get
 \begin{align*}
 \left(\frac{\kk(\sqrt{ p_2q})/\kk}{\mathcal{H}_j}\right)&=\left(\frac{\kk(\sqrt{ p_2q})/\kk}{\mathcal{H}_j}\right)(\sqrt{ p_2q})(\sqrt{ p_2q})^{-1}\\
 &=\left(\frac{p_2q}{\mathcal{H}_j}\right)\\
  &=\left(\frac{p_2q}{p_1}\right)\\
  &=\left(\frac{q}{p_1}\right)\left(\frac{p_1}{p_2}\right)\\
  &=-\left(\frac{p_1}{p_2}\right).
\end{align*}
Similarly, we have:
 \begin{align*}
 \left(\frac{\kk(\sqrt{ p_1})/\kk}{\mathcal{H}_3}\right)&=\left(\frac{\kk(\sqrt{ p_1})/\kk}{\mathcal{H}_3}\right)(\sqrt{ p_1})(\sqrt{ p_1})^{-1}\\
 &=\left(\frac{p_1}{\mathcal{H}_3}\right)\\
  &=\left(\frac{p_1}{p_2}\right).  \text{ Thus }
\end{align*}
  - If  $\left(\frac{p_1}{p_2}\right)=-1$, then $[\mathcal{H}_j]\in N_1$ and
  $N_1=\langle[\mathcal{H}_1],  \mathcal{H}_2]\rangle.$\\
 -  If  $\left(\frac{p_1}{p_2}\right)=1$, then $[\mathcal{H}_j]\not\in N_1$,
  $[\mathcal{H}_1\mathcal{H}_2]\in N_1$ and $[\mathcal{H}_3]\in N_1$. Hence\\
  $N_1=\langle[\mathcal{H}_1\mathcal{H}_2],  [\mathcal{H}_3]\rangle.$\\
 $\bullet$ Take as a second example:  $\KK_4=\kk(\sqrt{\pi_1\pi_3})=\kk(\sqrt{q\pi_2\pi_4})$.\\
 1\up{st} case: Assume first that   $\left(\frac{p_1}{p_2}\right)=-1$,  hence  Lemmas \ref{10:17} and \ref{10:20} imply that:\\
 $\left\{
 \begin{array}{ll}
 \left(\frac{\pi_1\pi_3}{\mathcal{H}_2}\right)=\left(\frac{\pi_1\pi_3}{\pi_2}\right)=\left(\frac{\pi_1}{\pi_2}\right)\left(\frac{\pi_3}{\pi_2}\right)=
 -\left(\frac{\pi_2}{\pi_3}\right)=\left(\frac{\pi_1}{\pi_3}\right),\\
 \left(\frac{\pi_2\pi_4q}{\mathcal{H}_1}\right)=\left(\frac{\pi_2\pi_4q}{\pi_1}\right)=\left(\frac{q}{p_1}\right)\left(\frac{\pi_1}{\pi_2}\right)
 \left(\frac{\pi_4}{\pi_1}\right)=\left(\frac{\pi_1}{\pi_4}\right)=-\left(\frac{\pi_1}{\pi_3}\right),\\
 \left(\frac{\pi_2\pi_4q}{\mathcal{H}_3}\right)=\left(\frac{\pi_2\pi_4q}{\pi_3}\right)=\left(\frac{q}{p_2}\right)\left(\frac{\pi_2}{\pi_3}\right)
 \left(\frac{\pi_4}{\pi_3}\right)=\left(\frac{\pi_2}{\pi_3}\right)=-\left(\frac{\pi_1}{\pi_3}\right).
 \end{array}\right.$\\ Thus \\
-  If  $\left(\frac{\pi_1}{\pi_3}\right)=-1$, then $\mathcal{H}_1\in N_4$ and $\mathcal{H}_3\in N_4$. Hence
  $ N_4=\langle[\mathcal{H}_1], [\mathcal{H}_3]\rangle. $\\
 - If  $\left(\frac{\pi_1}{\pi_3}\right)=1$, then $\mathcal{H}_2\in N_4$ and  $\h\mathcal{H}_3\in N_4$. Hence
  $N_4=\langle[\mathcal{H}_2], [\h\mathcal{H}_3]\rangle. $\\
 2\up{nd} case: Assume   $\left(\frac{p_1}{p_2}\right)=1$,  then   Lemmas \ref{10:17} and  \ref{10:20} imply that:\\
 $\left\{
 \begin{array}{ll}
 \left(\frac{\pi_1\pi_3}{\mathcal{H}_2}\right)=\left(\frac{\pi_1\pi_3}{\pi_2}\right)=\left(\frac{\pi_1}{\pi_2}\right)\left(\frac{\pi_3}{\pi_2}\right)=
 -\left(\frac{\pi_2}{\pi_3}\right)=-\left(\frac{\pi_1}{\pi_3}\right),\\
 \left(\frac{\pi_2\pi_4q}{\mathcal{H}_1}\right)=\left(\frac{\pi_2\pi_4q}{\pi_1}\right)=\left(\frac{q}{p_1}\right)\left(\frac{\pi_1}{\pi_2}\right)
 \left(\frac{\pi_4}{\pi_1}\right)=\left(\frac{\pi_1}{\pi_4}\right)=\left(\frac{\pi_1}{\pi_3}\right),\\
 \left(\frac{\pi_2\pi_4q}{\mathcal{H}_3}\right)=\left(\frac{\pi_2\pi_4q}{\pi_3}\right)=\left(\frac{q}{p_2}\right)\left(\frac{\pi_2}{\pi_3}\right)
 \left(\frac{\pi_4}{\pi_3}\right)=\left(\frac{\pi_2}{\pi_3}\right)=\left(\frac{\pi_1}{\pi_3}\right).
 \end{array}\right.$\\ Thus \\
-  If  $\left(\frac{\pi_1}{\pi_3}\right)=1$, then $\mathcal{H}_1\in N_4$ and $\mathcal{H}_3\in N_4$.
Hence
  $ N_4=\langle[\mathcal{H}_1], [\mathcal{H}_3]\rangle. $\\
 - If  $\left(\frac{\pi_1}{\pi_3}\right)=-1$, then $\mathcal{H}_2\in N_4$ and $\h\mathcal{H}_3\in N_4$. Hence
  $N_4=\langle[\mathcal{H}_2], [\h\mathcal{H}_3]\rangle. $\\
Proceeding similarly, we check the other table inputs.
\subsubsection{\bf Capitulation kernels   $\kappa_{\KK_j/\kk}$ and $\mathbf{C}l_2(\KK_j)$\label{17}.}
  Let us compute the  Galois groups $G_j=\mathrm{Gal}(\L/\KK_j)$,  the capitulation kernels $\kappa_{\KK_j}$,  $\kappa_{\KK_j}\cap N_j $ and the types of $\mathbf{C}l_2(\KK_j)$.  The results are summarized in the following Tables \ref{10:23} and \ref{10:21}.   Put $\beta=\left(\frac{1+i}{\pi_1}\right)\left(\frac{1+i}{\pi_3}\right)$,  $\pi=\left(\frac{\pi_1}{\pi_3}\right)$, $N=N(\varepsilon_{p_1p_2})$ and $\mathrm{I}=\left(\frac{p_1p_2}{2}\right)_4\left(\frac{2p_1}{p_2}\right)_4\left(\frac{2p_2}{p_1}\right)_4$. Note that, in the  Table \ref{10:23} and for the column $G_j$,   the left hand side (if it exists) refers to the case  $\beta=1$, while the right one refers to the case  $\beta=-1$. Whereas, in the  Table \ref{10:21} and for the same column,   the left hand side (if it exists) refers to the case  $\mathrm{I}=1$, while the right one refers to the case  $\mathrm{I}=-1$.
\scriptsize
 \begin{longtable}{|c c | c | c | c | c |}
\caption{\large{ $\kappa_{\KK_j/\kk}$ for the case $\left(\frac{p_1}{p_2}\right)=1$}.}\label{10:23}\\
\hline
 $\KK_j$  && $G_j$ &  $\kappa_{\KK_j/\kk}$ & $\kappa_{\KK_j/\kk}\cap N_j $ & $\mathbf{C}l_2(\KK_j)$\\
\hline
\endfirsthead
\hline
 $\KK_j$ & &$G_j$ &  $\kappa_{\KK_j/\kk}$ & $\kappa_{\KK_j/\kk}\cap N_j$ & $\mathbf{C}l_2(\KK_j)$ \\
\hline
\endhead
 $\KK_1$ & & $\langle\sigma, \tau\rho, \tau^2\rangle$  & $\langle[\mathcal{H}_1], [\mathcal{H}_2]\rangle$& $\langle[\mathcal{H}_1\mathcal{H}_2]\rangle$ & $(2, 2, 2)$\\ \hline
  $\KK_2$ & & $\langle\sigma, \rho, \tau^2\rangle$  &   $\langle[\mathcal{H}_1\mathcal{H}_2], [\mathcal{H}_3]\rangle$ & $\langle[\mathcal{H}_1\mathcal{H}_2]\rangle$ & $(2, 2, 2)$\\ \hline
 & $N=1$ & &  $\langle[\mathcal{H}_1\mathcal{H}_2]\rangle$ &   &  \\[-1ex]
  \raisebox{2ex}{$\KK_3$}
   &$N=-1$ & \raisebox{2ex}{$\langle\tau, \sigma\rangle$}
  &  $\langle[\mathcal{H}_1\mathcal{H}_3]\rangle$ or $\langle[\mathcal{H}_2\mathcal{H}_3]\rangle$ & \raisebox{2ex}{$\kappa_{\KK_3/\kk}$}  & \raisebox{2ex}{$(2^{m}, 2^{n+2})$} \\ [1ex]\hline
& $\pi=1$ &  $\langle\tau, \rho\rangle$  $\langle\sigma\tau, \rho\rangle$   &  & $N_4=\kappa_{\KK_4/\kk}$ & $(2, 4)$\\[-1ex]
 \raisebox{2ex}{$\KK_4$}
  &  $\pi=-1$   &  $\langle\tau, \sigma\rho, \sigma^2\rangle$  $\langle\sigma\tau, \sigma\rho, \sigma^2\rangle$ &  \raisebox{2ex}{$\langle[\mathcal{H}_1], [\mathcal{H}_3]\rangle$} & $\langle[\mathcal{H}_1\mathcal{H}_3]\rangle$ & $(2, 2, 2)$ \\[1ex]\hline
\raisebox{-2ex}{$\KK_5$} & $\pi=1$ &  $\langle\sigma\tau, \rho\rangle$  $\langle\tau, \rho\rangle$  &  & $N_5=\kappa_{\KK_5/\kk}$ &  $(2, 4)$\\[-1ex]
 & $\pi=-1$  & $\langle\sigma\rho, \sigma\tau, \sigma^2\rangle$  $\langle\sigma\rho, \tau, \sigma^2\rangle$  & \raisebox{2ex}{$\langle[\mathcal{H}_1], [\hh\mathcal{H}_3]\rangle$} & $\langle[\mathcal{H}_2\hhh]\rangle$  &  $(2, 2, 2)$ \\[1ex]\hline
 \raisebox{-2ex}{$\KK_6$}&$\pi=1$ &  $\langle\sigma\tau, \sigma\rho\rangle$  $\langle\tau, \sigma\rho\rangle$  &  & $N_6=\kappa_{\KK_6/\kk}$  & $(2, 4)$\\[-1ex]
  &   $\pi=-1$    &  $\langle \rho, \sigma\tau, \sigma^2\rangle$   $\langle\rho, \tau, \sigma^2\rangle$  & \raisebox{2ex}{$\langle[\mathcal{H}_2], [\mathcal{H}_3]\rangle$} & $\langle[\mathcal{H}_2\hhh]\rangle$ & $(2, 2, 2)$ \\[1ex]\hline
& $\pi=1$ &  $\langle\tau, \sigma\rho\rangle$   $\langle\sigma\tau, \sigma\rho\rangle$ & & $N_7=\kappa_{\KK_7/\kk}$ &$(2, 4)$\\[-1ex]
\raisebox{2ex}{$\KK_7$ }
    & $\pi=-1$    &  $\langle \rho, \tau, \sigma^2\rangle$   $\langle\rho, \sigma\tau, \sigma^2\rangle$  & \raisebox{2ex}{$\langle[\mathcal{H}_2], [\h\mathcal{H}_3]\rangle$} & $\langle[\mathcal{H}_1\mathcal{H}_3]\rangle$ &  $(2, 2, 2)$  \\[1ex]\hline
\end{longtable}
\normalsize
\scriptsize
 \begin{longtable}{| c c | c | c | c | c |}
\caption{\large{ $\kappa_{\KK_j/\kk}$ for the case $\left(\frac{p_1}{p_2}\right)=-1$.}}\label{10:21}\\
\hline
 $\KK_j$ & & $G_j$ &  $\kappa_{\KK_j/\kk}$ & $\kappa_{\KK_j/\kk}\cap N_j$ & $\mathbf{C}l_2(\KK_j)$\\
\hline
\endfirsthead
\hline
 $\KK_j$ & & $G_j$ &  $\kappa_{\KK_j/\kk}$ & $\kappa_{\KK_j/\kk}\cap N_j$ & $\mathbf{C}l_2(\KK_j)$ \\
\hline
\endhead
 $\KK_1$ & & $\langle\sigma, \rho\rangle$  & $\langle[\mathcal{H}_1], [\mathcal{H}_2]\rangle$ & $N_1=\kappa_{\KK_1/\kk}$ & $(2, 4)$\\ \hline
  $\KK_2$ & & $\langle\sigma, \tau\rho\rangle$  &   $\langle[\mathcal{H}_1\mathcal{H}_2], [\mathcal{H}_3]\rangle$ & $N_2=\kappa_{\KK_2/\kk}$  & $(2, 4)$\\ \hline
 $\KK_3$ & &  $\langle\sigma, \tau\rangle$ & $\langle[\mathcal{H}_1\mathcal{H}_3]\rangle$ or $\langle[\mathcal{H}_2\mathcal{H}_3]\rangle$ & $\kappa_{\KK_3/\kk}$ & $(4, 2^{m+1})$ \\ \hline
& $\pi=1$ &  $\langle\tau, \sigma\rho, \sigma^2\rangle$ $\langle\tau\rho, \sigma\tau, \tau^2\rangle$  &   & \qquad $\langle[\mathcal{H}_1\hhh]\rangle$ & $(2, 2, 2)$\\[-1ex]
 \raisebox{2ex}{$\KK_4$}
 & $\pi=-1$     &  $\langle\sigma\tau, \rho\rangle$  $\langle\tau, \rho\rangle$   & \raisebox{2ex}{$\langle[\mathcal{H}_1], [\mathcal{H}_3]\rangle$}  & $N_4=\kappa_{\KK_4/\kk}$  &  $(2, 4)$ \\[1ex]\hline
 & $\pi=1$ &   $\langle\sigma\tau, \rho\rangle$ $\langle\tau, \rho\rangle$  &  & $N_5=\kappa_{\KK_5/\kk}$  & $(2, 4)$\\[-1ex]
\raisebox{2ex}{$\KK_5$}
& $\pi=-1$  & $\langle\tau, \sigma\rho, \sigma^2\rangle$  $\langle\sigma\tau, \sigma\rho, \tau^2\rangle$  &  \raisebox{2ex}{$\langle[\mathcal{H}_1], [\mathcal{H}_2\mathcal{H}_3]\rangle$} & $\langle[\mathcal{H}_2\mathcal{H}_3]\rangle$  & $(2, 2, 2)$ \\[1ex]\hline
& $\pi=1$ &  $\langle\sigma\tau, \sigma\rho\rangle$  $\langle\sigma\rho, \tau\rangle$  &   & $N_6=\kappa_{\KK_6/\kk}$ &$(2, 4)$\\[-1ex]
\raisebox{2ex}{$\KK_6$}
  & $\pi=-1$       &  $\langle\tau, \rho, \sigma^2\rangle$  $\langle\rho, \sigma\tau, \tau^2\rangle$     & \raisebox{2ex}{$\langle[\mathcal{H}_2], [\mathcal{H}_3]\rangle$}  & $\langle\mathcal{H}_2\mathcal{H}_3]\rangle$   &  $(2, 2, 2)$ \\[1ex]\hline
 & $\pi=1$ &  $\langle\rho, \tau, \sigma^2\rangle$  $\langle\rho, \sigma\tau, \tau^2 \rangle$  &  & $\langle[\h\mathcal{H}_3]\rangle$& $(2, 2, 2)$\\[-1ex]
\raisebox{2ex}{$\KK_7$}
  &   $\pi=-1$    &  $\langle\sigma\rho, \sigma\tau\rangle$  $\langle\tau, \sigma\rho\rangle$    & \raisebox{2ex}{$\langle[\mathcal{H}_2], [\mathcal{H}_1\mathcal{H}_3]\rangle$} & $N_7=\kappa_{\KK_7/\kk}$ &  $(2, 4)$  \\[1ex]\hline
\end{longtable}
\normalsize
To check the tables inputs,   we use the following remarks and Lemma \ref{10:19}.
\begin{rema} According to Artin symbol properties we get:\\
$\bullet$ $\sigma=\displaystyle\left(\frac{L/\KK_3}{\mathfrak{P}}\right)=
\displaystyle\left(\frac{L/\kk}{N_{\KK_3/\kk}(\mathfrak{P})}\right)=
\displaystyle\left(\frac{L/\kk}{\mathcal{H}_1\mathcal{H}_2}\right)$, since $N_{\KK_3/\kk}(\mathfrak{P})\sim \mathcal{H}_1\mathcal{H}_2$.\\
$\bullet$ $\tau=\displaystyle\left(\frac{L/\KK_3}{\mathfrak{A}}\right)=
\displaystyle\left(\frac{L/\kk}{N_{\KK_3/\kk}(\mathfrak{A})}\right)=
\displaystyle\left(\frac{L/\kk}{\mathcal{H}_1\mathcal{H}_3}\right)$ or $\displaystyle\left(\frac{L/\kk}{\mathcal{H}_2\mathcal{H}_3}\right)$, since $N_{\KK_3/\kk}(\mathfrak{A})\sim \mathcal{H}_1\mathcal{H}_3$ or $\mathcal{H}_2\mathcal{H}_3$.\\
$\bullet$ $\rho=\displaystyle\left(\frac{L/\kk}{\mathcal{H}_1}\right)$.
\end{rema}
From Lemmas \ref{10:9} and \ref{10:12}, we deduce that:
\begin{rema}
 $(1) $Assume that $\left(\frac{p_1}{p_2}\right)=1$, so
\begin{enumerate}[\rm\indent(i)]
  \item If $\left(\frac{\pi_1}{\pi_3}\right)=-1$, then $N(\varepsilon_{p_1p_2})=1$,  $n=1$ and $m\geq3$. Thus $\rho^2=\tau^4\sigma^{2^{m-1}}$ or $\rho^2=\sigma^{2^{m-1}}$.
  \item If $\left(\frac{\pi_1}{\pi_3}\right)=1$, then
  \begin{enumerate}[\rm\indent(a)]
    \item If $N(\varepsilon_{p_1p_2})=1$, then $m=2$ and $n\geq2$. Thus $\rho^2=\sigma^2\tau^{2^{n+1}}$ or $\rho^2=\sigma^{2}$.
    \item If $N(\varepsilon_{p_1p_2})=-1$, then $m=2$ and $n\geq2$. Thus $\rho^2=\sigma^2\tau^{2^{n}}$ and $\sigma^4=\tau^{2^{n+1}}$.
  \end{enumerate}
\end{enumerate}
$(2)$  Assume that $\left(\frac{p_1}{p_2}\right)=-1$, so $n=1$,   $m\geq2$,  $\rho^2=\sigma^{2^{m-1}}\tau^{2}$ and $\sigma^4=\tau^{4}$.
\end{rema}

Recall that the Artin map $\phi$  induces the following commutative diagram:
\begin{figure}[H]
\xymatrix{
 &&&& \mathbf{C}l_2(\kk)\ar[d]_{J_{\KK_j/\kk}} \ar[r]^- \phi & G/G' \ar[d]^{{\rm V}_{G/G_j}}\\
 &&&& \mathbf{C}l_2(\KK_j) \ar[r]^-\phi & G_j/G_j' }
\end{figure}
\hspace{-0.4cm}the rows are isomorphisms and ${\rm V}_{G/G_j}: G/G' \longrightarrow G_j/G_j'$ is the group transfer map (Verlagerung) which has the following simple characterization when $G_j$ is of index $2$ in $G$. Let $G=G_j\cup zG_j$,  then
$${\rm V}_{G/G_j}(gG')=\left\{
        \begin{array}{ll}
          gz^{-1}gz.G_j'=g^2[g, z].G_j'  & \hbox{ if $g\in G_j$},\\
          g^2G_j'  & \hbox{ if $g \notin G_j$.}
        \end{array}
      \right.$$
Thus $\kappa_{\KK_j/\kk}=\ker J_{\KK_j/\kk}$ is determined by $\ker{\rm V}_{G/ G_j}$.\\
Let us show the table inputs for a few examples.

 (a) For the extension   $\KK_1$,  Table  \ref{26} yields that:\\
$N_1=\left\{ \begin{array}{ll}
 \langle[\mathcal{H}_3], [\mathcal{H}_1\mathcal{H}_2]\rangle & \text{ if }\left(\frac{p_1}{p_2}\right)=1,\\
  \langle[\mathcal{H}_1], [\mathcal{H}_2]\rangle=\langle[\mathcal{H}_1\mathcal{H}_2], [\mathcal{H}_1]\rangle  &\text{  if  }\left(\frac{p_1}{p_2}\right)=-1.
 \end{array} \right.$\\
 Hence  $G_1=\mathrm{Gal}(L/\KK_1)=\left\{ \begin{array}{ll}
 \langle\sigma, \tau\rho, G'\rangle=\langle\sigma, \tau\rho, \tau^2\rangle  &\text{  if  }\left(\frac{p_1}{p_2}\right)=1,\\
 \langle\sigma, \rho, G'\rangle=\langle\sigma, \rho, \tau^2\rangle=\langle\sigma, \rho\rangle  &\text{  if  }\left(\frac{p_1}{p_2}\right)=-1.
 \end{array} \right.$\\
Thus $G/G_1=\langle\tau\rangle=\{1, \tau G_1\}$. Moreover, Proposition \ref{24} implies that\\
 $[\tau\rho, \sigma]=[\rho, \sigma]=\sigma^2$ and
$[\tau\rho, \tau^2]=[\rho, \tau^2]=\tau^4$;\\
so  $G_1'=\left\{ \begin{array}{ll}
\langle\tau^4, \sigma^2\rangle  &\text{  if  }\left(\frac{p_1}{p_2}\right)=1,\\
\langle \sigma^2\rangle  &\text{  if  }\left(\frac{p_1}{p_2}\right)=-1;
\end{array} \right.$
this in turn implies that\\ $\mathbf{C}l_2(\KK_1)=G_1/G_1'\simeq\left\{ \begin{array}{ll}
 (2, 2, 2) &\text{  if  }\left(\frac{p_1}{p_2}\right)=1,\\
 (2, 4) &\text{  if  }\left(\frac{p_1}{p_2}\right)=-1,
 \end{array} \right.$ since  $(\tau\rho)^2$, $\rho^2\in G_1 '$.\\
Compute now the kernel of  $V_{G\rightarrow G_1}$.
\begin{enumerate}[\indent $\ast$]
\item $V_{G\rightarrow G_1}(\sigma G')=\sigma^2G_1'=G_1'$.
\item $V_{G\rightarrow G_1}(\tau G')=\tau^2G_1'\neq G_1'$.
 \item $V_{G\rightarrow G_1}(\rho G')=\rho^2G_1'= G_1'$.
 \end{enumerate}
 Hence $\ker V_{G\rightarrow G_1}=\langle\sigma G', \rho G'\rangle,$ thus  $\kappa_{\KK_1/\kk}=\langle[\mathcal{H}_1\mathcal{H}_2], [\mathcal{H}_1]\rangle=\langle[\mathcal{H}_1], [\mathcal{H}_2]\rangle.$

 (b) Take another example  $\KK_4=\kk(\sqrt{\pi_1\pi_3})$ and assume that $\left(\frac{p_1}{p_2}\right)=-1$, then  $n=1$, $m\geq2$ and  $N(\varepsilon_{p_1p_2})=-1$. We have two cases to discuss:\\
 \indent  1\up{st} case:    $\left(\frac{\pi_1}{\pi_3}\right)=-1$.  Table  \ref{26} yields that
$N_4=\langle[\mathcal{H}_1], [\mathcal{H}_3]\rangle$,
 hence  $G_4=\mathrm{Gal}(L/\KK_4)=\langle\tau ,\rho, G'\rangle=\langle\tau, \rho\rangle $ or
 $\langle\sigma\tau, \rho, G'\rangle=\langle\sigma\tau, \rho\rangle$ according as $\tau=\displaystyle\left(\frac{L/\kk}{\mathcal{H}_1\mathcal{H}_3}\right)$ or $\displaystyle\left(\frac{L/\kk}{\mathcal{H}_2\mathcal{H}_3}\right)$.
Thus $G/G_4=\langle\sigma\rangle=\{1, \sigma G_4\}$. Moreover, Proposition \ref{24} implies that
 $[\rho, \sigma\tau]=(\sigma\tau)^2$ and $[\rho, \tau]=\tau^2$.\\
So  $G_4'=\langle\tau^2\rangle$ or
$\langle (\sigma\tau)^2\rangle.$
Therefore $\mathbf{C}l_2(\KK_4)\simeq G_4/G_4'\simeq (2, 4)$.\\
Compute   $\ker V_{G\rightarrow G_4}$, according as $\tau=\displaystyle\left(\frac{L/\kk}{\mathcal{H}_1\mathcal{H}_3}\right)$ or $\displaystyle\left(\frac{L/\kk}{\mathcal{H}_2\mathcal{H}_3}\right)$ we get\\
$\left\{ \begin{array}{ll}
\ast V_{G\rightarrow G_4}(\sigma G')=\sigma^2G_4'\neq G_4'.\\
\ast V_{G\rightarrow G_4}(\tau G')=\tau^2[\sigma, \tau]G_4'= G_4'.\\
\ast V_{G\rightarrow G_4}(\rho G')=\rho^2G_4'= G_4'.
\end{array} \right.$
 or
$\left\{ \begin{array}{ll}
\ast V_{G\rightarrow G_4}(\sigma G')=\sigma^2G_4'\neq G_4'.\\
\ast V_{G\rightarrow G_4}(\tau G')=\tau^2G_4'\neq G_4'.\\
\ast V_{G\rightarrow G_4}(\rho G')=\rho^2G_4'= G_4'.\\
\ast V_{G\rightarrow G_4}(\sigma\tau G')=(\sigma\tau)^2G_4'= G_4'.
\end{array} \right.$\\
Hence  $\ker V_{G\rightarrow G_4}=\langle\tau G', \rho G'\rangle$ or $\langle\sigma\tau G', \rho G'\rangle$, thus
 $$\kappa_{\KK_4/\kk}=\langle[\mathcal{H}_1\mathcal{H}_3], [\mathcal{H}_1]\rangle=\langle[\mathcal{H}_1], [\mathcal{H}_3]\rangle.$$

 \indent  2\up{nd} case:    $\left(\frac{\pi_1}{\pi_3}\right)=1$.  Table  \ref{26} yields that
$N_4=\langle[\mathcal{H}_2], [\mathcal{H}_1\mathcal{H}_3]\rangle$,
 hence  $G_4=\mathrm{Gal}(L/\KK_4)=\langle\tau ,\sigma\rho, G'\rangle=\langle\tau, \sigma\rho, \sigma^2\rangle$ or
 $\langle\sigma\tau ,\tau\rho, G'\rangle=\langle\sigma\tau ,\tau\rho,  \sigma^2\rangle$ according as $\tau=\displaystyle\left(\frac{L/\kk}{\mathcal{H}_1\mathcal{H}_3}\right)$ or $\displaystyle\left(\frac{L/\kk}{\mathcal{H}_2\mathcal{H}_3}\right)$.
Thus $G/G_4=\langle\sigma\rangle=\{1, \sigma G_4\}$. Moreover, Proposition \ref{24} implies that
 $[\tau\rho, \sigma\tau]=(\sigma\tau)^2$, $[\tau\rho, \sigma^2]=\sigma^4$, $[\sigma\rho, \sigma^2]=\sigma^4$,  and $[\sigma\rho, \tau]=\tau^2$.\\
So  $G_4'=\langle\tau^2, \sigma^4\rangle$ or
$\langle\tau^4,  (\sigma\tau)^2\rangle.$
Therefore $\mathbf{C}l_2(\KK_4)\simeq G_4/G_4'\simeq (2, 2, 2)$.\\
Compute   $\ker V_{G\rightarrow G_1}$, according as $\tau=\displaystyle\left(\frac{L/\kk}{\mathcal{H}_1\mathcal{H}_3}\right)$ or $\displaystyle\left(\frac{L/\kk}{\mathcal{H}_2\mathcal{H}_3}\right)$ we get\\
$\left\{ \begin{array}{ll}
\ast V_{G\rightarrow G_4}(\sigma G')=\sigma^2G_4'\neq G_4'.\\
\ast V_{G\rightarrow G_4}(\tau G')=\tau^2G_4'= G_4'.\\
\ast V_{G\rightarrow G_4}(\rho G')=\rho^2G_4'= G_4'.
\end{array} \right.$
 or
$\left\{ \begin{array}{ll}
\ast V_{G\rightarrow G_4}(\sigma G')=\sigma^2G_4'\neq G_4'.\\
\ast V_{G\rightarrow G_4}(\tau G')=\tau^2G_4'\neq G_4'.\\
\ast V_{G\rightarrow G_4}(\rho G')=\rho^2G_4'= G_4'.\\
\ast V_{G\rightarrow G_4}(\sigma\tau G')=(\sigma\tau)^2G_4'= G_4'.
\end{array} \right.$\\
Hence  $\ker V_{G\rightarrow G_4}=\langle\tau G', \rho G'\rangle$ or $\langle\sigma\tau G', \rho G'\rangle$, thus
 $$\kappa_{\KK_4/\kk}=\langle[\mathcal{H}_1], [\mathcal{H}_3]\rangle.$$
Proceeding similarly, we show the other tables inputs.
\subsubsection{\bf Capitulations kernels $\kappa_{\LL_j/\kk}$ and $\mathbf{C}l_2(\LL_j)$}
 From the subsection \ref{17}, we deduce that $\kappa_{\LL_j/\kk}=\mathbf{C}l_2(\kk)$. In what follows, we compute the Galois groups $\mathcal{G}_j=\mathrm{Gal}(\L/\LL_j)$, their derived groups $\mathcal{G}_j'$ and the abelian type invariants of $\mathbf{C}l_2(\LL_j)$. The results are summarized in the following Tables \ref{10:24} and \ref{10:22}. Keep the notations of the Subsection \ref{17} and  put:
 $\left\{
 \begin{array}{ll}
 a=\min(n, m),\\
  b=\max(n+1, m+1).
 \end{array}
 \right.$

 Hence in  the Table \ref{10:24}, the left hand sides (if they exist) of the columns $\mathcal{G}_j$ and $\mathbf{C}l_2(\LL_j)$ refer to the case  $\beta=1$, while the right ones refer to the case  $\beta=-1$;  and in the Table \ref{10:22},   the left hand side of the column $\mathcal{G}_j$ refers to the case  $\mathrm{I}=1$, while the right one refers to the case  $\mathrm{I}=-1$.
\scriptsize
 \begin{longtable}{|c c | c | c | c |}
\caption{ \small{Invariants of $\mathbf{C}l_2(\LL_j)$ for the case $\left(\frac{p_1}{p_2}\right)=1$}}\label{10:24}\\
\hline
 $\LL_j$  & & $\mathcal{G}_j$ & $\mathcal{G}_j'$ & $\mathbf{C}l_2(\LL_j)$\\
\hline
\endfirsthead
\hline
 $\LL_j$ & &$\mathcal{G}_j$ & $\mathcal{G}_j$ & $\mathbf{C}l_2(\LL_j)$\\
\hline
\endhead
 & && & $(2^a, 2^{b})$ if $N=-1$\\[-1ex]
 \raisebox{2ex}{$\LL_1$} && \raisebox{2ex}{$\langle \tau^2, \sigma\rangle$} & \raisebox{2ex}{$\langle1\rangle$} &
  $(2^{m}, 2^{n+1})$ if $N=1$\\[1ex] \hline
  & $\pi=-1$ &  $\langle\sigma\tau\rho, \sigma^2, \tau^2\rangle$  $\langle\tau\rho, \sigma^2, \tau^2\rangle$  & $\langle\sigma^4, \tau^4\rangle$ & $(2, 2, 2)$\\[-1ex]
  \raisebox{2ex}{$\LL_2$}
 &$\pi=1$ &  $\langle\tau\rho, \tau^2\rangle$  $\langle\sigma\tau\rho, \tau^2\rangle$   & $\langle \tau^4\rangle$ & $(2, 4)$\\[1ex]\hline
 &$\pi=-1$ &  $\langle\tau\rho, \sigma^2, \tau^2\rangle$  $\langle\sigma\tau\rho, \sigma^2, \tau^2\rangle$  & $\langle\sigma^4, \tau^4\rangle$ & $(2, 2, 2)$\\[-1ex]
\raisebox{2ex}{$\LL_3$}
& $\pi=1$& $\langle\sigma\tau\rho,  \tau^2\rangle$ $\langle\tau\rho,  \tau^2\rangle$  & $\langle \tau^4\rangle$ & $(2, 4)$\\ [1ex]\hline
  & $\pi=-1$&   $\langle \sigma\rho, \sigma^2, \tau^2\rangle$  & $\langle\sigma^4, \tau^4\rangle$ & $(2, 2, 2)$\\[-1ex]
 \raisebox{2ex}{$\LL_4$}
    &$\pi=1$     &    $\langle \rho,  \tau^2\rangle$  & $\langle \tau^4\rangle$ & $(2, 4)$ \\[1ex]\hline
  & $\pi=-1$&  $\langle \rho, \sigma^2, \tau^2\rangle$  & $\langle\sigma^4, \tau^4\rangle$ & $(2, 2, 2)$\\[-1ex]
\raisebox{1.7ex}{$\LL_5$}
  &$\pi=1$&   $\langle \rho\sigma,  \tau^2\rangle$  & $\langle \tau^4\rangle$ & $(2, 4)$\\[1ex]\hline
  & $\pi=-1$&    &  & $(2^{m-1}, 2^{n+2})$  $(2^{m}, 2^{n+1})$\\[-1ex]
\raisebox{1.7ex}{$\LL_6$}
  &$\pi=1$&   \raisebox{1.7ex}{$\langle \tau, \sigma^2\rangle$  $\langle\sigma\tau,  \sigma^2\rangle$}  & \raisebox{1.7ex}{$\langle1\rangle$} & $(2,  2^{n+2})$\\[1ex]\hline
 &$\pi=-1$ &      &  & $(2^{m}, 2^{n+1})$ $(2^{m-1}, 2^{n+1})$\\[-1ex]
\raisebox{2ex}{$\LL_7$}
   & $\pi=1$  &  \raisebox{1.7ex}{$\langle\sigma\tau,  \sigma^2\rangle$  $\langle \tau, \sigma^2\rangle$ }   & \raisebox{1.7ex}{$\langle1\rangle$} &
           $(2,  2^{n+2})$\\[1ex]\hline
\end{longtable}
\normalsize

\scriptsize
 \begin{longtable}{|c c | c | c | c |}
\caption{\large{ Invariants of $\mathbf{C}l_2(\LL_j)$ for the case $\left(\frac{p_1}{p_2}\right)=-1$}}\label{10:22}\\
\hline
 $\LL_j$  & & $\mathcal{G}_j$ & $\mathcal{G}_j'$ & $\mathbf{C}l_2(\LL_j)$\\
\hline
\endfirsthead
\hline
 $\LL_j$ & &$\mathcal{G}_j$ & $\mathcal{G}_j$ & $\mathbf{C}l_2(\LL_j)$\\
\hline
\endhead
 & && & $(2^{a}, 2^{b})$ if $N=-1$\\[-1ex]
 \raisebox{2ex}{$\LL_1$} && \raisebox{2ex}{$\langle \tau^2, \sigma\rangle$} & \raisebox{2ex}{$\langle1\rangle$} &
  $(2^{m}, 2^{n+1})$ if $N=1$\\[1ex] \hline
  &$\pi=1$ &  $\langle\sigma\rho, \tau^2\rangle$  &  & \\[-1ex]
 \raisebox{2ex}{$\LL_2$}
 &$\pi=-1$ &$\langle\rho, \tau^2\rangle$ & \raisebox{2ex}{$\langle \tau^4\rangle$} & \raisebox{2ex}{$(2, 4)$}\\\hline
  &$\pi=1$ &  $\langle\rho, \tau^2\rangle$  &  & \\[-1ex]
 \raisebox{2ex}{$\LL_3$}
 &$\pi=-1$ &$\langle\sigma\rho, \tau^2\rangle$ & \raisebox{2ex}{$\langle \tau^4\rangle$} & \raisebox{2ex}{$(2, 4)$}\\\hline
 $\LL_4$ & &  $\langle\sigma\tau\rho, \sigma^2\rangle$  $\langle\tau\rho, \sigma^2\rangle$  & $\langle \sigma^4\rangle$ & $(2, 4)$\\ \hline
$\LL_5$ & &   $\langle\tau\rho, \sigma^2\rangle$  $\langle\sigma\tau\rho, \sigma^2\rangle$  & $\langle \sigma^4\rangle$ & $(2, 4)$\\ \hline
 & $\pi=1$ & $\langle\tau, \sigma^2\rangle$  $\langle\sigma\tau,  \tau^2\rangle$   & $\langle1\rangle$ & $(2^{2},  2^{m})$ \\[-1ex]
\raisebox{2ex}{$\LL_6$}
      & $\pi=-1$  & $\langle\sigma\tau,  \tau^2\rangle$   $\langle\tau, \sigma^2\rangle$   & $\langle1\rangle$ & $(2,  2^{m+1})$\\[1ex]\hline
 &$\pi=1$ &  $\langle\sigma\tau,  \tau^2\rangle$   $\langle\tau, \sigma^2\rangle$     & $\langle1\rangle$ & $(2,  2^{m+1})$\\[-1ex]
\raisebox{2ex}{$\LL_7$}
   & $\pi=-1$  &  $\langle\tau, \sigma^2\rangle$  $\langle\sigma\tau,  \tau^2\rangle$   & $\langle1\rangle$ & $(2^{2},  2^{m})$\\[1ex]\hline
\end{longtable}
\normalsize

Check the entries in  some cases.\\
\indent $\ast$ Take $\mathbb{L}_1=\k=\KK_1.\KK_2.\KK_3$.  Since $\mathrm{Gal}(L/\LL_1)=\mathcal{G}_1=G_1\cap G_2$, then \\
 $ \mathcal{G}_1=
\langle\sigma, \tau\rho, \tau^2\rangle\cap\langle\sigma, \rho, \tau^2\rangle=\langle\sigma, \tau^2\rangle, $
 thus $\mathcal{G}_1'=\langle1\rangle$. As \\
  $\left\{\begin{array}{ll}
\sigma^{2^m}=\tau^{2^{n+1}}=1  & \hbox{ if } N(\varepsilon_{p_1p_2})=1,\\
\sigma^{2^m}=\tau^{2^{n+1}} \hbox{ and } \sigma^{2^{m+1}}=\tau^{2^{n+2}}=1  & \hbox{ if } N(\varepsilon_{p_1p_2})=-1,
\end{array}\right.$\\
  so $\mathbf{C}l_2(\LL_1)\simeq\left\{\begin{array}{ll}
(2^{n+1}, 2^m)  & \hbox{ if } N(\varepsilon_{p_1p_2})=1,\\
(2^{\min(n,m)}, 2^{\max(n+1,m+1)})  & \hbox{ if } N(\varepsilon_{p_1p_2})=-1.
\end{array}\right.$\\
\indent $\ast$ Take
 $\mathbb{L}_2=\KK_1.\KK_4.\KK_6$ and assume that $(\frac{p_1}{p_2})=1$, then $\mathcal{G}_2=\mathrm{Gal}(L/\LL_2)=G_1\cap G_4\cap G_6$. There are two  cases to distinguish: \\
 - 1\up{st} case: If $\left(\frac{p_1}{p_2}\right)_4\left(\frac{p_2}{p_1}\right)_4=\left(\frac{\pi_1}{\pi_3}\right)=-1$, then
$\mathcal{G}_2=\langle\sigma, \tau\rho, \tau^2\rangle \cap \langle\sigma\tau, \rho, \tau^2\rangle=\langle\sigma\tau\rho, \sigma^2, \tau^2\rangle
$ or $\langle\sigma, \tau\rho, \tau^2\rangle \cap \langle\sigma\tau, \tau\rho, \sigma^2\rangle=\langle\tau\rho, \sigma^2, \tau^2\rangle$ according as $\beta=1$ or $-1$.
  Thus $\mathcal{G}_2'=\langle\sigma^4, \tau^4\rangle$. On the other hand,  in this case we have  $(\sigma\tau\rho)^2=(\tau\rho)^2=\rho^2\tau^{2^{n+1}}$, so   $\mathbf{C}l_2(\LL_2)\simeq(2, 2, 2)$.\\
  - 2\up{nd} case: If $\left(\frac{p_1}{p_2}\right)_4\left(\frac{p_2}{p_1}\right)_4=\left(\frac{\pi_1}{\pi_3}\right)=1$, then\\
$\mathcal{G}_2=
\langle\sigma, \tau\rho, \tau^2\rangle \cap \langle\tau, \rho, \sigma^2\rangle=\langle\tau\rho, \sigma^2, \tau^2\rangle$
or $\langle\sigma, \tau\rho, \tau^2\rangle \cap \langle\sigma\tau, \rho, \sigma^2\rangle=\langle\sigma\tau\rho, \sigma^2, \tau^2\rangle$ according as $\beta=1$ or $-1$.
As in this case\\ $(\tau\rho)^2=(\sigma\tau\rho)^2=\left\{\begin{array}{ll}
                                                               \rho^2 & \text{ if } N(\varepsilon_{p_1p_2})=-1,\\
                                                               \rho^2\tau^{2^{n+1}} & \text{ if } N(\varepsilon_{p_1p_2})=1;
                                                               \end{array}\right.$\\  so
$\mathcal{G}_2=
\langle\tau\rho,  \tau^2\rangle$ or
$\langle\sigma\tau\rho,  \tau^2\rangle$.
 We infer that $\mathcal{G}_2'=\langle\tau^4\rangle$. From which we deduce that   $\mathbf{C}l_2(\LL_2)\simeq(2, 4)$, since $(\sigma\tau\rho)^4=(\tau\rho)^4=1$.\\
 Assume now that $(\frac{p_1}{p_2})=-1$, then
$\mathcal{G}_2=
 \left\{\begin{array}{ll}
\langle\rho, \tau^2\rangle  & \hbox{ if } \left(\frac{\pi_1}{\pi_3}\right)=-1,\\
\langle\sigma\rho,  \tau^2\rangle  & \hbox{ if } \left(\frac{\pi_1}{\pi_3}\right)=1.
\end{array}\right.$\\
  Thus $\mathcal{G}_2'=\langle\tau^4\rangle$, hence   $\mathbf{C}l_2(\LL_2)\simeq(2, 4)$.

\indent $\ast$ Take
 $\mathbb{L}_6=\KK_3.\KK_4.\KK_7$ and assume that $\left(\frac{p_1}{p_2}\right)=-1$, then $\mathcal{G}_6=\mathrm{Gal}(L/\LL_4)=G_3\cap G_4\cap G_7$. There are two  cases to distinguish: \\
 \indent $\bullet$ 1\up{st} case: If $\left(\frac{p_1p_2}{2}\right)_4\left(\frac{2p_1}{p_2}\right)_4\left(\frac{2p_2}{p_1}\right)_4=-1$, then Lemma \ref{10:12} implies that
 $n=1$ and $m=2$, hence $\sigma^4=\tau^4$ and $\rho^2=\sigma^2\tau^2$. We have also two sub-cases to discus:\\
\indent $a$ - If $\left(\frac{\pi_1}{\pi_3}\right)=1$, then Table \ref{10:21} yields that
$\mathcal{G}_6=\langle\sigma\tau, \tau^2\rangle=\langle\sigma\tau, \sigma^2\rangle$,
  thus $\mathcal{G}_6'=\langle1\rangle$. As $(\sigma\tau)^4=\sigma^4\tau^4=\tau^8=1$ and  $(\sigma^2)^{2^m}=\sigma^8=1$, so   $\mathbf{C}l_2(\LL_6)\simeq(2^2, 2^m)$.\\
\indent $b$ - If $\left(\frac{\pi_1}{\pi_3}\right)=-1$, then Table \ref{10:21} yields that
$\mathcal{G}_6=\langle\sigma^2, \tau\rangle$,
  thus $\mathcal{G}_6'=\langle1\rangle$. As $(\sigma^2)^{2}=\tau^4$ and $\tau^{2^{n+2}}=\tau^8=1$, so   $\mathbf{C}l_2(\LL_6)\simeq(2, 2^{m+1})$.\\
 \indent $\bullet$ 2\up{nd} case: If $\left(\frac{p_1p_2}{2}\right)_4\left(\frac{2p_1}{p_2}\right)_4\left(\frac{2p_2}{p_1}\right)_4=1$, then Lemma \ref{10:12} implies that
 $n=1$ and $m\geq3$, hence $\sigma^{2^m}=\tau^4$ and $\rho^2=\tau^2\sigma^{2^{m-1}}$. We have also two sub-cases to discus:\\
\indent $a$ - If $\left(\frac{\pi_1}{\pi_3}\right)=1$, then Table \ref{10:21} yields that
$\mathcal{G}_6=\langle\tau, \sigma^2\rangle$,
  thus $\mathcal{G}'_6=\langle1\rangle$. As $\tau^{2^2}=\tau^4=\sigma^{2^m}=(\sigma^2)^{2^{m-1}}$ and  $(\sigma^2)^{2^m}=\sigma^{2^{m+1}}=1$, so   $\mathbf{C}l_2(\LL_6)\simeq(2^2, 2^m)$.\\
\indent $b$ - If $\left(\frac{\pi_1}{\pi_3}\right)=-1$, then Table \ref{10:21} yields that
$\mathcal{G}_6=\langle\sigma\tau, \tau^2\rangle$,
  thus $\mathcal{G}_6'=\langle1\rangle$. As $(\tau^2)^2=\tau^4=\sigma^{2^m}=\sigma^{2^m}\tau^{2^m}=(\sigma\tau)^{2^m}$ and $\sigma\tau$ is of order $2^{m+1}$, so   $\mathbf{C}l_2(\LL_6)\simeq(2, 2^{m+1})$.
The other tables entries are checked similarly.

\section{Numerical examples}
\label{s:Examples}
To obtain a first impression of how the $2$-tower groups $G=\mathrm{Gal}(\L/\kk)$ are distributed on the coclass graphs $\mathcal{G}(2,r)$ for $3\le r\le 4$ \cite[\S\ 2, p. 410]{Ma-13}, we have analyzed the $207$ bicyclic biquadratic fields $\kk=\QQ(\sqrt{d},i)$ with $d=p_1p_2q<50000$. All computations were done with the aid of PARI/GP \cite{GP-11}.
Using the presentations in terms of the parameters $m$, $n$, and $N=N(\varepsilon_{p_1p_2})$ in Theorem \ref{10:2}, we can identify these Galois groups $G$ of the maximal unramified pro-$2$ extensions $\L$ of bicyclic biquadratic fields $\kk$ by means of the SmallGroups Library \cite{BEO-05} by Besche, Eick, and O'Brien. It turns out that most of the occurring groups are members of {\it one-parameter families}, which can be identified with infinite {\it periodic sequences} in the sense of \cite[p. 411]{Ma-13}. For this identification it is convenient to give {\it polycyclic presentations} of the groups which are particularly adequate for emphasizing the structure of the lower central series of $G$ and of the coclass tree where $G$ is located.

\tiny
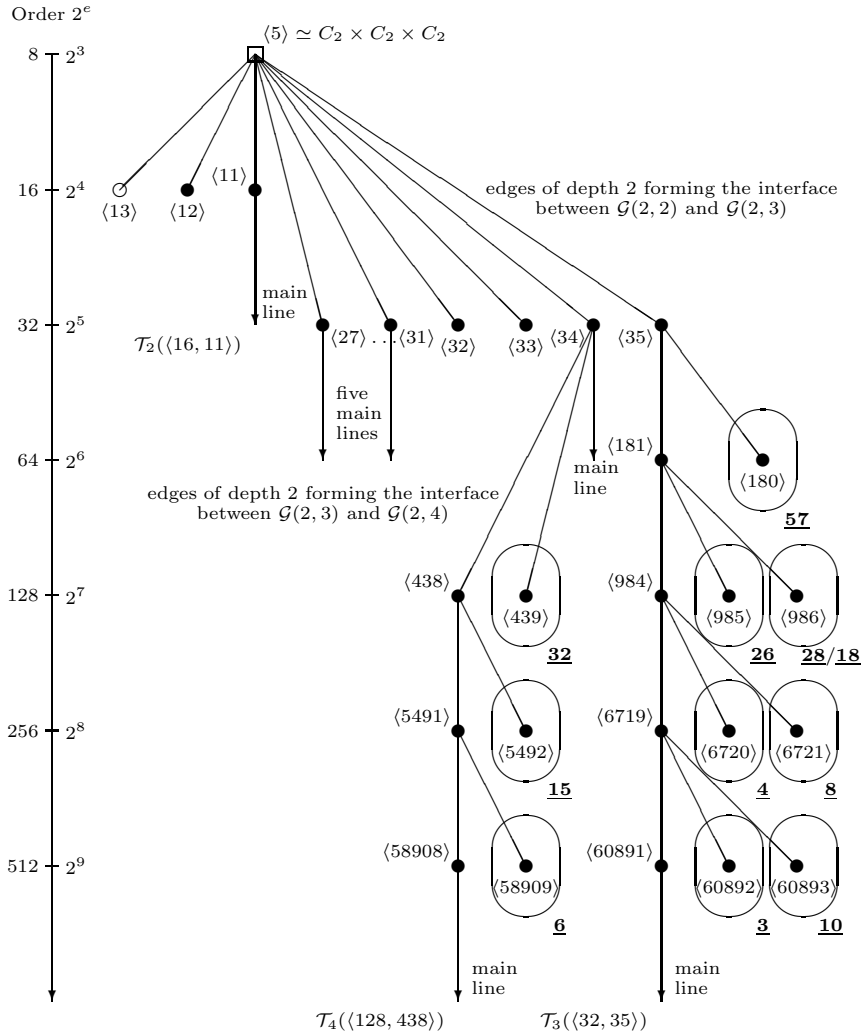
\begin{figure}[ht]
\caption{Relevant part of the descendant tree of $C_2\times C_2\times C_2$}
\label{fig:DescendantTree}
\setlength{\unitlength}{0.9cm}
\begin{picture}(11,15)(-8,-14)

\put(-8,0.5){\makebox(0,0)[cb]{Order \(2^e\)}}
\put(-8,0){\line(0,-1){12}}
\put(-8,-12){\vector(0,-1){2}}
\multiput(-8.1,0)(0,-2){7}{\line(1,0){0.2}}
\put(-8.2,0){\makebox(0,0)[rc]{\(8\)}}
\put(-7.8,0){\makebox(0,0)[lc]{\(2^3\)}}
\put(-8.2,-2){\makebox(0,0)[rc]{\(16\)}}
\put(-7.8,-2){\makebox(0,0)[lc]{\(2^4\)}}
\put(-8.2,-4){\makebox(0,0)[rc]{\(32\)}}
\put(-7.8,-4){\makebox(0,0)[lc]{\(2^5\)}}
\put(-8.2,-6){\makebox(0,0)[rc]{\(64\)}}
\put(-7.8,-6){\makebox(0,0)[lc]{\(2^6\)}}
\put(-8.2,-8){\makebox(0,0)[rc]{\(128\)}}
\put(-7.8,-8){\makebox(0,0)[lc]{\(2^7\)}}
\put(-8.2,-10){\makebox(0,0)[rc]{\(256\)}}
\put(-7.8,-10){\makebox(0,0)[lc]{\(2^8\)}}
\put(-8.2,-12){\makebox(0,0)[rc]{\(512\)}}
\put(-7.8,-12){\makebox(0,0)[lc]{\(2^9\)}}

\put(-5,0){\line(1,-4){1}}
\put(-5,0){\line(1,-2){2}}
\put(-5,0){\line(3,-4){3}}
\put(-5,0){\line(1,-1){4}}
\put(-5,0){\line(5,-4){5}}
\put(-5,0){\line(3,-2){6}}
\put(1,-2){\makebox(0,0)[cc]{edges of depth \(2\) forming the interface}}
\put(1,-2.3){\makebox(0,0)[cc]{between \(\mathcal{G}(2,2)\) and \(\mathcal{G}(2,3)\)}}

\put(0,-4){\line(-1,-4){1}}
\put(0,-4){\line(-1,-2){2}}
\put(-4,-6.5){\makebox(0,0)[cc]{edges of depth \(2\) forming the interface}}
\put(-4,-6.8){\makebox(0,0)[cc]{between \(\mathcal{G}(2,3)\) and \(\mathcal{G}(2,4)\)}}

\put(-5,-2){\vector(0,-1){2}}
\put(-4.9,-3.5){\makebox(0,0)[lc]{main}}
\put(-4.9,-3.8){\makebox(0,0)[lc]{line}}
\put(-5.2,-4.3){\makebox(0,0)[rc]{\(\mathcal{T}_2(\langle 16,11\rangle)\)}}

\put(-5.1,-0.1){\framebox(0.2,0.2){}}

\put(-5,-2){\circle*{0.2}}
\put(-6,-2){\circle*{0.2}}

\put(-7,-2){\circle{0.2}}

\put(-5,0){\line(0,-1){2}}
\put(-5,0){\line(-1,-2){1}}
\put(-5,0){\line(-1,-1){2}}

\put(-4.9,0.3){\makebox(0,0)[lc]{\(\langle 5\rangle\simeq C_2\times C_2\times C_2\)}}

\put(-5.1,-1.8){\makebox(0,0)[rc]{\(\langle 11\rangle\)}}
\put(-6,-2.2){\makebox(0,0)[ct]{\(\langle 12\rangle\)}}
\put(-7,-2.2){\makebox(0,0)[ct]{\(\langle 13\rangle\)}}

\put(-2,-12){\vector(0,-1){2}}
\put(-1.8,-13.5){\makebox(0,0)[lc]{main}}
\put(-1.8,-13.8){\makebox(0,0)[lc]{line}}
\put(-2.2,-14.3){\makebox(0,0)[rc]{\(\mathcal{T}_4(\langle 128,438\rangle)\)}}

\multiput(-4,-4)(1,0){5}{\circle*{0.2}}

\multiput(-2,-8)(0,-2){3}{\circle*{0.2}}
\multiput(-1,-8)(0,-2){3}{\circle*{0.2}}

\multiput(-2,-8)(0,-2){2}{\line(0,-1){2}}
\multiput(-2,-8)(0,-2){2}{\line(1,-2){1}}

\put(-3.9,-4.2){\makebox(0,0)[lc]{\(\langle 27\rangle\ldots\)}}
\put(-2.9,-4.2){\makebox(0,0)[lc]{\(\langle 31\rangle\)}}

\put(-2,-4.2){\makebox(0,0)[ct]{\(\langle 32\rangle\)}}
\put(-1,-4.2){\makebox(0,0)[ct]{\(\langle 33\rangle\)}}

\put(-0.1,-4.2){\makebox(0,0)[rc]{\(\langle 34\rangle\)}}

\put(-2.1,-7.8){\makebox(0,0)[rc]{\(\langle 438\rangle\)}}
\put(-1,-8.2){\makebox(0,0)[ct]{\(\langle 439\rangle\)}}

\put(-2.1,-9.8){\makebox(0,0)[rc]{\(\langle 5491\rangle\)}}
\put(-1,-10.2){\makebox(0,0)[ct]{\(\langle 5492\rangle\)}}

\put(-2.1,-11.8){\makebox(0,0)[rc]{\(\langle 58908\rangle\)}}
\put(-1,-12.2){\makebox(0,0)[ct]{\(\langle 58909\rangle\)}}

\put(-1,-8){\oval(1,1.5)}
\put(-0.5,-8.9){\makebox(0,0)[cc]{\underbar{\textbf{32}}}}
\put(-1,-10){\oval(1,1.5)}
\put(-0.5,-10.9){\makebox(0,0)[cc]{\underbar{\textbf{15}}}}
\put(-1,-12){\oval(1,1.5)}
\put(-0.5,-12.9){\makebox(0,0)[cc]{\underbar{\textbf{6}}}}

\put(-4,-4){\vector(0,-1){2}}
\put(-3.8,-5){\makebox(0,0)[lc]{five}}
\put(-3.8,-5.3){\makebox(0,0)[lc]{main}}
\put(-3.8,-5.6){\makebox(0,0)[lc]{lines}}
\put(-3,-4){\vector(0,-1){2}}

\put(0,-4){\vector(0,-1){2}}
\put(-0.3,-6.1){\makebox(0,0)[lc]{main}}
\put(-0.3,-6.4){\makebox(0,0)[lc]{line}}

\put(1,-12){\vector(0,-1){2}}
\put(1.2,-13.5){\makebox(0,0)[lc]{main}}
\put(1.2,-13.8){\makebox(0,0)[lc]{line}}
\put(0.8,-14.3){\makebox(0,0)[rc]{\(\mathcal{T}_3(\langle 32,35\rangle)\)}}

\put(1,-4){\line(3,-4){1.5}}
\multiput(1,-4)(0,-2){4}{\line(0,-1){2}}
\multiput(1,-6)(0,-2){3}{\line(1,-2){1}}
\multiput(1,-6)(0,-2){3}{\line(1,-1){2}}

\multiput(1,-4)(0,-2){5}{\circle*{0.2}}
\put(2.5,-6){\circle*{0.2}}
\multiput(2,-8)(0,-2){3}{\circle*{0.2}}
\multiput(3,-8)(0,-2){3}{\circle*{0.2}}

\put(0.9,-4.2){\makebox(0,0)[rc]{\(\langle 35\rangle\)}}

\put(0.9,-5.8){\makebox(0,0)[rc]{\(\langle 181\rangle\)}}
\put(2.5,-6.2){\makebox(0,0)[ct]{\(\langle 180\rangle\)}}

\put(0.9,-7.8){\makebox(0,0)[rc]{\(\langle 984\rangle\)}}
\put(2,-8.2){\makebox(0,0)[ct]{\(\langle 985\rangle\)}}
\put(3.1,-8.2){\makebox(0,0)[ct]{\(\langle 986\rangle\)}}

\put(0.9,-9.8){\makebox(0,0)[rc]{\(\langle 6719\rangle\)}}
\put(2,-10.2){\makebox(0,0)[ct]{\(\langle 6720\rangle\)}}
\put(3.1,-10.2){\makebox(0,0)[ct]{\(\langle 6721\rangle\)}}

\put(0.9,-11.8){\makebox(0,0)[rc]{\(\langle 60891\rangle\)}}
\put(2,-12.2){\makebox(0,0)[ct]{\(\langle 60892\rangle\)}}
\put(3.1,-12.2){\makebox(0,0)[ct]{\(\langle 60893\rangle\)}}

\put(2.5,-6){\oval(1,1.5)}
\put(3,-6.9){\makebox(0,0)[cc]{\underbar{\textbf{57}}}}

\put(2,-8){\oval(1,1.5)}
\put(2.5,-8.9){\makebox(0,0)[cc]{\underbar{\textbf{26}}}}
\put(2,-10){\oval(1,1.5)}
\put(2.5,-10.9){\makebox(0,0)[cc]{\underbar{\textbf{4}}}}
\put(2,-12){\oval(1,1.5)}
\put(2.5,-12.9){\makebox(0,0)[cc]{\underbar{\textbf{3}}}}

\put(3.1,-8){\oval(1,1.5)}
\put(3.5,-8.9){\makebox(0,0)[cc]{\underbar{\textbf{28}}/\underbar{\textbf{18}}}}
\put(3.1,-10){\oval(1,1.5)}
\put(3.5,-10.9){\makebox(0,0)[cc]{\underbar{\textbf{8}}}}
\put(3.1,-12){\oval(1,1.5)}
\put(3.5,-12.9){\makebox(0,0)[cc]{\underbar{\textbf{10}}}}

\end{picture}
\end{figure}

\normalsize
The descendant tree of the elementary abelian $2$-group of rank $3$ is drawn for the first time in Figure \ref{fig:DescendantTree}. Vertices $G$ of this diagram are classified according to their centre $\zeta(G)$ by using different symbols:
\begin{enumerate}
\item
a large contour square $\square$ denotes an abelian group,
\item
a big contour circle {\Large $\circ$} represents a metabelian group with cyclic centre of order $4$, and
\item
big full discs {\Large $\bullet$} represent metabelian groups with bicyclic centre of type $(2,2)$.
\end{enumerate}
Groups are labelled by a number in angles which is the identifier in the SmallGroups Library \cite{BEO-05}. Here, we omit the order which is given on the left hand scale. The actual distribution of the $207$ second $2$-class groups $G$ of bicyclic biquadratic fields $\kk=\mathbb{Q}(\sqrt{d},i)$ of type $(2,2,2)$
with radicand $0<d=p_1p_2q<50000$ is represented by underlined boldface counters (absolute frequencies) of hits of the vertex surrounded by the adjacent oval.

\subsection{The tree $\mathcal{T}_3(\langle 32,35\rangle)$ of the coclass graph $\mathcal{G}(2,3)$}
\label{ss:Coclass3}
Without exceptions, all groups $G$ of coclass $3$ are vertices of the coclass tree with root $\langle 32,35\rangle$.
They can be partitioned into four categories.
Firstly, the {\it pre-periodic} vertex $\langle 64,180\rangle$ on branch $1$ of this tree.
The associated fields have parameters $m=2$, $n=1$, $N=-1$ and $(\frac{p_1}{p_2})=+1$.
Periodicity sets in with branch $2$ and gives rise to {\it two periodic sequences} which form the second and third category.
A periodic sequence always consists of a ground state and infinitely many higher excited states \cite[p. 424]{Ma-13}.
Finally, there is a {\it variant} of the vertex $\langle 128,986\rangle$, exceptional from the number theoretic point of view,
since the maximal subgroups of $G$ are associated to the unramified quadratic extensions of $\kk$
in a different manner than for the group-theoretically isomorphic ground state of the second periodic sequence.

\begin{exam}
\label{exm:PrePeriod}
The most frequent group $G\simeq\langle 64,180\rangle$ with parameters $m=2$, $n=1$ occurs $57$ times ($28\%$) for the radicands $d\in\lbrace 455,795,2955,\ldots\rbrace$ with $N=-1$ and $(\frac{p_1}{p_2})=+1$.
It is a terminal leaf on the first branch of the coclass tree $\mathcal{T}_3(\langle 32,35\rangle)$, whose mainline vertices \cite[pp. 410--411]{Ma-13}
arise as finite quotients of the infinite topological pro-$2$ group given by Newman and O'Brien in \cite[App. A, no. 79, p. 155, and App. B, Tbl. 79, p. 168]{NmOb-99}. They form the infinite periodic sequence
$(\langle 32,35\rangle,\langle 64,181\rangle,\langle 128,984\rangle,\langle 256,6719\rangle,\langle 512,60891\rangle,\ldots)$
with parametrized pc-presentation ($n\ge 2$)
{\tiny
\begin{align*}
\langle x,y,z\mid & s_2=\lbrack y,x\rbrack,\ t_2=\lbrack z,x\rbrack,\ t_j=\lbrack t_{j-1},x\rbrack\text{ for }3\le j\le n+2, \\
                  & x^2=s_2,\ y^2=s_2,\ z^2=t_2t_3,\ s_2^2=1,\ t_j^2=t_{j+1}t_{j+2}\text{ for }2\le j\le n,\ t_{n+1}^2=t_{n+2}\rangle.
\end{align*}
}
\end{exam}

\normalsize
\begin{exam}
\label{exm:m2nVarNormMinus}
(Second $2$-class groups $G$ with parameters $m=2$, $n\ge 2$, $N=-1$)
The groups of class $c(G)=n+2$,
\tiny
$$G=\langle\rho,\sigma,\tau\mid\rho^4=\sigma^8=\tau^{2^{n+2}}=1,\ \rho^2=\tau^{2^n}\sigma^2,\ \sigma^4=\tau^{2^{n+1}},\
\lbrack\sigma,\tau\rbrack=1,\ \lbrack\sigma,\rho\rbrack=\sigma^2,\ \lbrack\rho,\tau\rbrack=\tau^2\rangle,$$
\normalsize
with $n\ge 2$, form the infinite periodic sequence
$(\langle 128,985\rangle,\langle 256,6720\rangle,\langle 512,60892\rangle,$ $\ldots)$
with parametrized pc-presentation
{\tiny
\begin{align*}
\langle x,y,z\mid & s_2=\lbrack y,x\rbrack,\ t_2=\lbrack z,x\rbrack,\ t_j=\lbrack t_{j-1},x\rbrack\text{ for }3\le j\le n+2, \\
                  & x^2=s_2,\ y^2=s_2t_{n+2},\ z^2=t_2t_3,\ s_2^2=1,\ t_j^2=t_{j+1}t_{j+2}\text{ for }2\le j\le n,\ t_{n+1}^2=t_{n+2}\rangle.
\end{align*}
}
\normalsize
The ground state $\langle 128,985\rangle$ with $n=2$ occurs $26$ times ($13\%$) for $d\in\lbrace 435,6235,$ $6815,\ldots\rbrace$,
the first excited state $\langle 256,6720\rangle$ with $n=3$ occurs $4$ times for $d\in\lbrace 15915,17139,42915,46587\rbrace$, and
the second excited state $\langle 512,60892\rangle$ with $n=4$ occurs $3$ times for $d\in\lbrace 6915,16135,39315\rbrace$.
\end{exam}

\normalsize
\begin{exam}
\label{exm:n1mVarNormMinus}
(Second $2$-class groups $G$ with parameters $m\ge 3$, $n=1$, $N=-1$)\\
The groups of class $c(G)=m+1$,
\tiny
$$G=\langle\rho,\sigma,\tau\mid\rho^4=\sigma^{2^{m+1}}=\tau^8=1,\ \rho^2=\tau^2\sigma^{2^{m-1}},\ \sigma^{2^m}=\tau^4,\
\lbrack\sigma,\tau\rbrack=1,\ \lbrack\sigma,\rho\rbrack=\sigma^{2^m-2},\ \lbrack\rho,\tau\rbrack=\tau^2\rangle,$$
\normalsize
with $m\ge 3$, form the infinite periodic sequence
$(\langle 128,986\rangle,\langle 256,6721\rangle,\langle 512,60893\rangle,$ $\ldots)$
with parametrized pc-presentation
{\tiny
\begin{align*}
\langle x,y,z\mid & s_2=\lbrack y,x\rbrack,\ t_2=\lbrack z,x\rbrack,\ t_j=\lbrack t_{j-1},x\rbrack\text{ for }3\le j\le m+1, \\
                  & x^2=s_2,\ y^2=s_2,\ z^2=t_2t_3t_{m+1},\ s_2^2=1,\ t_j^2=t_{j+1}t_{j+2}\text{ for }2\le j\le m-1,\ t_m^2=t_{m+1}\rangle.
\end{align*}
}
\normalsize
The ground state $\langle 128,986\rangle$ with $m=3$ occurs $28$ times ($14\%$) for $d\in\lbrace 2595,4255,$ $4395,\ldots\rbrace$,
the first excited state $\langle 256,6721\rangle$ with $m=4$ occurs $8$ times for $d\in\lbrace 8355,19155,24195,\ldots\rbrace$, and
the second excited state $\langle 512,60893\rangle$ with $m=5$ occurs $10$ times for $d\in\lbrace 19459,26663,28171,\ldots\rbrace$.
We also observed a third excited state $\langle 512,60891\rangle-\#1;3$ with $m=6$ for a single radicand $d=79651$ outside of the range of our systematic investigations.
\end{exam}

\normalsize
We point out again that the group $\langle 64,180\rangle$ with $m=2$, $n=1$, $N=-1$ is pre-periodic and its presentation does not fit into either of the last two examples, whence it does not belong to either of the two mentioned periodic non-mainline sequences.

The last three examples suggest a very promising {\it characterization of mainline vertices} on coclass trees of arbitrary $p$-groups, which seems to be of a fairly general nature and obviously has not been recognized by other investigators up to now.

\begin{rema}
\label{rmk:MainlinePrinciple}
{\bf Mainline Principle:}
The relators for \(p\)th powers of generators of mainline groups are distinguished by being independent of the class, whereas a relator of any non-mainline group contains the generator of the last non-trivial term of the lower central series as a \textit{small perturbation}.
\end{rema}

In Example \ref{exm:m2nVarNormMinus}, resp. \ref{exm:n1mVarNormMinus}, the small perturbation is the generator $t_{n+2}\in\gamma_{n+2}(G)$, resp. $t_{m+1}\in\gamma_{m+1}(G)$, of the last non-trivial lower central in the relation $y^2=s_2t_{n+2}$, where $n+2=c(G)$, resp. $z^2=t_2t_3t_{m+1}$, where $m+1=c(G)$. For the rest, the presentation coincides with the mainline presentation.

\begin{exam}
\label{exm:Variant}
There exists a unique group which can be characterized by two distinct couples of parameters $(m,n)$.
Whereas $m=n=2$ with $N=-1$ gives rise to $\langle 128,985\rangle$,
the same parameters $m=n=2$ with $N=+1$ define a {\it variant} of $\langle 128,986\rangle$, which was given by $m=3$, $n=1$, $N=-1$ already.
This variant is realized $18$ times ($9\%$) for the radicands $d\in\lbrace 1515, 3535, 5551,\ldots\rbrace$.

\end{exam}

\subsection{The tree $\mathcal{T}_4(\langle 128,438\rangle)$ of the coclass graph $\mathcal{G}(2,4)$}
\label{ss:Coclass4}
The groups $G$ of coclass $4$ are either vertices of the coclass tree with root $\langle 128,438\rangle$
or they populate the single sporadic {\it isolated} vertex $\langle 128,439\rangle$ outside of any coclass trees.
The associated fields are characterized by $N=+1$ and $(\frac{p_1}{p_2})=+1$, according to Theorem \ref{10:2}.
Periodicity sets in with branch $1$ already and gives rise to a {\it single periodic non-mainline sequence}.
The mainline of this tree is given by the parametrized pc-presentation ($m\ge 3$)
{\tiny
\begin{align*}
\langle x,y,z\mid & s_2=\lbrack y,x\rbrack,\ t_2=\lbrack z,x\rbrack,\ s_j=\lbrack s_{j-1},x\rbrack\text{ for }3\le j\le m,\ t_3=\lbrack t_2,x\rbrack, \\
                  & x^2=1,\ y^2=s_2s_3,\ z^2=t_2,\ s_j^2=s_{j+1}s_{j+2}\text{ for }2\le j\le m-2,\ s_{m-1}^2=s_m,\ t_2^2=t_3\rangle.
\end{align*}
}

\normalsize
\begin{exam}
\label{exm:Isolated}
With $32$ occurrences ($15\%$) the second largest density of population arises for the {\it sporadic isolated vertex} $\langle 128,439\rangle$ with parameters $m=3$, $n=1$, $N=+1$, which occurs for $d\in\lbrace 2135,2235,4035,\ldots\rbrace$.
The realm of coclass $cc(G)=4$ commences with this immediate descendant of {\it depth two} of $\langle 32,34\rangle$, called an {\it interface group} at the border of the coclass graphs $\mathcal{G}(2,3)$ and $\mathcal{G}(2,4)$ in \cite[pp. 430--434]{Ma-13}. It is an isolated top-vertex of the sporadic part of $\mathcal{G}(2,4)$, lying outside of any coclass trees. In particular, it has nothing to do with the coclass tree of $\langle 64,174\rangle$ arising from its generalized parent $\langle 32,34\rangle$, whose projective mainline limit is given by \cite[App. A, no. 78, p. 155, and App. B, Tbl. 78, p. 167]{NmOb-99}.
\end{exam}

\begin{exam}
\label{exm:n1mVarNormPlus}
(Second $2$-class groups $G$ with parameters $m\ge 4$, $n=1$, $N=+1$)\\
The groups of class $c(G)=m$,
\tiny
$$G=\langle\rho,\sigma,\tau\mid\rho^4=\sigma^{2^m}=\tau^8=1,\ \rho^2=\tau^4\sigma^{2^{m-1}},\
\lbrack\sigma,\tau\rbrack=1,\ \lbrack\rho,\sigma\rbrack=\sigma^2,\ \lbrack\tau,\rho\rbrack=\tau^2\rangle,$$
\normalsize
with $m\ge 4$, form the infinite periodic sequence
$(\langle 256,5492\rangle,\langle 512,58909\rangle,\ldots)$
with parametrized pc-presentation
{\tiny
\begin{align*}
\langle x,y,z\mid & s_2=\lbrack y,x\rbrack,\ t_2=\lbrack z,x\rbrack,\ s_j=\lbrack s_{j-1},x\rbrack\text{ for }3\le j\le m,\ t_3=\lbrack t_2,x\rbrack, \\
                  & x^2=s_m,\ y^2=s_2s_3,\ z^2=t_2,\ s_j^2=s_{j+1}s_{j+2}\text{ for }2\le j\le m-2,\ s_{m-1}^2=s_m,\ t_2^2=t_3\rangle.
\end{align*}
}
\normalsize
The group $\langle 128,439\rangle$ with $m=3$, $n=1$, $N=+1$ is sporadic and even isolated, but its presentation is also of the same form.
The ground state $\langle 256,5492\rangle$ with $m=4$ occurs $15$ times ($7\%$) for $d\in\lbrace 10515,12535,12963,\ldots\rbrace$,
the first excited state $\langle 512,58909\rangle$ with $m=5$ occurs $6$ times for $d\in\lbrace 34635,41115,41835,\ldots\rbrace$.
\end{exam}

We conclude this section with the following tables: Table \ref{29} gives the structure of the class group $\mathbf{C}l(\kk)$ of the bicyclic biquadratic field $\kk$, its discriminant $disc(\kk)$, the structure of the class groups of its two quadratic subfields $k_0$ and $\overline{k}_0$, and the coclass of $G$. Tables \ref{30} and \ref{31}, resp. \ref{32} and \ref{33}, give the structure of the class groups $\mathbf{C}l(\KK_j)$, resp. $\mathbf{C}l(\LL_j)$, for the case $(\frac{p_1}{p_2})=1$. Finally, Tables \ref{34} and \ref{35}, resp. \ref{36}, give the structure of the class groups $\mathbf{C}l(\KK_j)$, resp. $\mathbf{C}l(\LL_j)$, for the case $(\frac{p_1}{p_2})=-1$. We briefly put $N=N(\varepsilon_{p_1p_2})$, $\gamma=\left(\frac{p_1}{p_2}\right)$ and  $\delta=\left(\frac{p_1}{p_2}\right)_4\left(\frac{p_2}{p_1}\right)_4$.

\tiny
\begin{longtable}{| c | c | c | c | c | c | c | c | c | c |}
\caption{\small{ Invariants of $\kk$}\label{29}}\\
\hline
 $d$  $ = p_1.p_2.q$ & $\gamma$ & $\delta$ & $N$ & $m$, $n$
             & $\mathbf{C}l(k_0)$ & $\mathbf{C}l(\overline{k}_0)$ & $\mathbf{C}l(\kk)$ & $disc(\kk)$ & $cc(G)$ \\
\hline
\endfirsthead
\hline
 $d$  $ = p_1.p_2.q$ & $\gamma$ & $\delta$ & $N$ & $m$, $n$
             & $\mathbf{C}l(k_0)$ & $\mathbf{C}l(\overline{k}_0)$ & $\mathbf{C}l(\kk)$ & $disc(\kk)$ & $cc(G)$ \\
\hline
\endhead
$435 = 5.29.3 $ & $1$ & $1 $ & $-1$ & $2$, $2$ & $(2, 2) $ & $ (2, 2) $ & $ (2 , 2, 2) $ & $3027600 $ & $ 3$\\
$455 = 5.13.7 $ & $-1$ &  & $-1$ & $2$, $1$ & $(2, 2) $ & $ (10, 2) $ & $ (10 , 2, 2) $ & $3312400 $ & $ 3$\\
$795 = 5.53.3 $ & $-1$ &  & $-1$ & $2$, $1$ & $(2, 2) $ & $ (2, 2) $ & $ (2 , 2, 2) $ & $10112400 $ & $ 3$\\
$1515 = 5.101.3 $ & $1$ & $1 $ & $1$ & $2$, $2$ & $(2, 2) $ & $ (6, 2) $ & $ (6 , 2, 2) $ & $36723600 $ & $ 3$\\
$2135 = 5.61.7 $ & $1$ & $-1 $ & $1$ & $3$, $1$ & $(2, 2) $ & $ (22, 2) $ & $ (22 , 2, 2) $ & $72931600 $ & $ 4$\\
$2235 = 5.149.3 $ & $1$ & $-1 $ & $1$ & $3$, $1$ & $(2, 2) $ & $ (6, 2) $ & $ (6 , 2, 2) $ & $79923600 $ & $ 4$\\
$2595 = 5.173.3 $ & $-1$ &  & $-1$ & $3$, $1$ & $(2, 2) $ & $ (6, 2) $ & $ (6 , 2, 2) $ & $107744400 $ & $ 3$\\
$2955 = 5.197.3 $ & $-1$ &  & $-1$ & $2$, $1$ & $(2, 2) $ & $ (6, 2) $ & $ (6 , 2, 2) $ & $139712400 $ & $ 3$\\
$3055 = 5.13.47 $ & $-1$ &  & $-1$ & $2$, $1$ & $(2, 2) $ & $ (18, 2) $ & $ (18 , 2, 2) $ & $149328400 $ & $ 3$\\
$3535 = 5.101.7 $ & $1$ & $1 $ & $1$ & $2$, $2$ & $(2, 2) $ & $ (14, 2) $ & $ (14 , 2, 2) $ & $199939600 $ & $ 3$\\
$5551 = 13.61.7 $ & $1$ & $1 $ & $1$ & $2$, $2$ & $(2, 2) $ & $ (26, 2) $ & $ (26 , 2, 2) $ & $493017616 $ & $ 3$\\
$5835 = 5.389.3 $ & $1$ & $-1 $ & $1$ & $3$, $1$ & $(2, 2) $ & $ (6, 2) $ & $ (6 , 2, 2) $ & $544755600 $ & $ 4$\\
$7015 = 5.61.23 $ & $1$ & $-1 $ & $1$ & $3$, $1$ & $(2, 2) $ & $ (14, 2) $ & $ (14 , 2, 2) $ & $787363600 $ & $ 4$\\
$7163 = 13.29.19 $ & $1$ & $-1 $ & $1$ & $3$, $1$ & $(2, 2) $ & $ (10, 2) $ & $ (10 , 2, 2) $ & $820937104 $ & $ 4$\\
$9415 = 5.269.7 $ & $1$ & $-1 $ & $1$ & $3$, $1$ & $(2, 2) $ & $ (26, 2) $ & $ (26 , 2, 2) $ & $1418275600 $ & $ 4$\\
$10515 = 5.701.3 $ & $1$ & $-1 $ & $1$ & $4$, $1$ & $(2, 2) $ & $ (10, 2) $ & $ (10 , 2, 2) $ & $1769043600 $ & $ 4$\\
\hline
\end{longtable}
\begin{longtable}{| p{0.83in} | p{0.23in} | p{0.4in} |p{0.4in} |p{0.3in} | p{0.34in} | p{0.34in} |p{0.34in} | p{0.34in} |}
\caption{\small{ Invariants of $\KK_j$ for the case $\left(\frac{p_1}{p_2}\right)=1$ and $\left(\frac{\pi_1}{\pi_3}\right)=-1$}\label{30}}\\
\hline
 $d = p_1.p_2.q$ & $m$, $n$
& $\mathbf{C}l(\KK_1)$ & $\mathbf{C}l(\KK_2)$ & $\mathbf{C}l(\KK_3)$ & $\mathbf{C}l(\KK_4)$ & $\mathbf{C}l(\KK_5)$ & $\mathbf{C}l(\KK_6)$ & $\mathbf{C}l(\KK_7)$ \\
\hline
\endfirsthead
\hline
 $d = p_1.p_2.q$  & $m$, $n$
& $\mathbf{C}l(\KK_1)$ & $\mathbf{C}l(\KK_2)$ & $\mathbf{C}l(\KK_3)$ & $\mathbf{C}l(\KK_4)$ & $\mathbf{C}l(\KK_5)$ & $\mathbf{C}l(\KK_6)$ & $\mathbf{C}l(\KK_7)$ \\
\hline
\endhead
$2135 = 5.61.7 $ & $3$, $1$ & $(66, 2, 2) $ & $ (66, 2, 2) $ & $ (88, 8) $ & $(22,2, 2) $ & $ (22,2,2)$ & $(22,2,2)$ & $(22,2, 2)$\\
$2235 = 5.149.3 $ & $3$, $1$ & $(42, 2, 2) $ & $ (42, 2, 2) $ & $ (24, 8) $ & $(6,2, 2) $ & $ (6,6,2)$ & $(6,6,2)$ & $(6,2, 2)$\\
$4035 = 5.269.3 $ & $3$, $1$ & $(42, 2, 2) $ & $ (66, 2, 2) $ & $ (24, 24) $ & $(6,2, 2) $ & $ (6,2,2)$ & $(6,2,2)$ & $(6,2, 2)$\\
$4147 = 13.29.11 $ & $3$, $1$ & $(30, 2, 2) $ & $ (30, 6, 2) $ & $ (24, 8) $ & $(6,2, 2) $ & $ (6,2,2)$ & $(6,2,2)$ & $(6,2, 2)$\\
$4611 = 29.53.3 $ & $3$, $1$ & $(210, 2, 2) $ & $ (42, 6, 2) $ & $ (56, 8) $ & $(70,2, 2) $ & $ (42,2,2)$ & $(42,2,2)$ & $(70,2, 2)$\\
$5835 = 5.389.3 $ & $3$, $1$ & $(66, 2, 2) $ & $ (66, 2, 2) $ & $ (24, 8) $ & $(6,2, 2) $ & $ (6,2,2)$ & $(6,2,2)$ & $(6,2, 2)$\\
$14287 = 13.157.7 $ & $4$, $1$ & $(18, 6, 2) $ & $ (18, 6, 2) $ & $ (144, 8) $ & $(18,2, 2) $ & $ (18,2,2)$ & $(18,2,2)$ & $(18,2, 2)$\\
$15051 = 29.173.3 $ & $4$, $1$ & $(126, 6, 2) $ & $ (42, 14, 2) $ & $ (112, 8) $ & $(14,2, 2) $ & $ (14,2,2)$ & $(14,2,2)$ & $(14,2, 2)$\\
$17715 = 5.1181.3 $ & $4$, $1$ & $(18, 18, 2) $ & $ (414, 2, 2) $ & $ (144, 8) $ & $(18,2, 2) $ & $ (18,2,2)$ & $(18,2,2)$ & $(18,2, 2)$\\
$19515 = 5.1301.3 $ & $4$, $1$ & $(234, 2, 2) $ & $ (450, 2, 2) $ & $ (144, 8) $ & $(18,2, 2) $ & $ (18,2,2)$ & $(18,2,2)$ & $(18,2, 2)$\\
\hline
\end{longtable}
\begin{longtable}{| p{0.83in} | p{0.23in} | p{0.4in} |p{0.4in} |p{0.34in} | p{0.34in} | p{0.34in} |p{0.34in} | p{0.34in} |}
\caption{\small{ Invariants of $\KK_j$ for the case $\left(\frac{p_1}{p_2}\right)=1$ and $\left(\frac{\pi_1}{\pi_3}\right)=1$}\label{31}}\\
\hline
 $d = p_1.p_2.q$ & $m$, $n$
& $\mathbf{C}l(\KK_1)$ & $\mathbf{C}l(\KK_2)$ & $\mathbf{C}l(\KK_3)$ & $\mathbf{C}l(\KK_4)$ & $\mathbf{C}l(\KK_5)$ & $\mathbf{C}l(\KK_6)$ & $\mathbf{C}l(\KK_7)$ \\
\hline
\endfirsthead
\hline
 $d = p_1.p_2.q$  & $m$, $n$
& $\mathbf{C}l(\KK_1)$ & $\mathbf{C}l(\KK_2)$ & $\mathbf{C}l(\KK_3)$ & $\mathbf{C}l(\KK_4)$ & $\mathbf{C}l(\KK_5)$ & $\mathbf{C}l(\KK_6)$ & $\mathbf{C}l(\KK_7)$ \\
\hline
\endhead
$1515 = 5.101.3 $ & $2$, $2$ & $(30, 2, 2) $ & $ (42, 2, 2) $ & $ (48, 4) $ & $(12,2) $ & $ (12,2)$ & $(12,2)$ & $(12,2)$\\
$3535 = 5.101.7 $ & $2$, $2$ & $(42, 2, 2) $ & $ (14, 14, 2) $ & $ (112, 4) $ & $(28,2) $ & $ (28,2)$ & $(28,2)$ & $(28,2)$\\
$5551 = 13.61.7 $ & $2$, $2$ & $(78, 2, 2) $ & $ (78, 2, 2) $ & $ (208, 4) $ & $(52,2) $ & $ (52,2)$ & $(52,2)$ & $(52,2)$\\
$6235 = 5.29.43 $ & $2$, $2$ & $(78, 2, 2) $ & $ (42, 6, 2) $ & $ (48, 4) $ & $(60,2) $ & $ (12,2)$ & $(12,2)$ & $(60,2)$\\
$6335 = 5.181.7 $ & $2$, $2$ & $(138, 2, 2) $ & $ (230, 2, 2) $ & $ (1104, 4) $ & $(92,2) $ & $ (92,2)$ & $(92,2)$ & $(92,2)$\\
$6815 = 5.29.47 $ & $2$, $2$ & $(138, 2, 2) $ & $ (138, 6, 2) $ & $ (1840, 4) $ & $(92,2) $ & $ (92,2)$ & $(92,2)$ & $(92,2)$\\
$6915 = 5.461.3 $ & $2$, $4$ & $(126, 2, 2) $ & $ (210, 2, 2) $ & $ (1344, 4) $ & $(84,2) $ & $ (28,2)$ & $(28,2)$ & $(84,2)$\\
$15915 = 5.1061.3 $ & $2$, $3$ & $(238, 2, 2) $ & $ (182, 2, 2) $ & $ (1120, 4) $ & $(28,2) $ & $ (28,2)$ & $(28,2)$ & $(28,2)$\\
$16135 = 5.461.7 $ & $2$, $4$ & $(154, 2, 2) $ & $ (330, 2, 2) $ & $ (2112, 4) $ & $(132,6) $ & $ (44,2)$ & $(44,2)$ & $(132,6)$\\
$17139 = 29.197.3 $ & $2$, $3$ & $(462, 2, 2) $ & $ (210, 2, 2) $ & $ (672, 4) $ & $(84,2) $ & $ (28,2)$ & $(28,2)$ & $(84,2)$\\
\hline
\end{longtable}
\begin{longtable}{| p{0.83in} | p{0.23in}| p{0.13in} | p{0.45in} |p{0.32in} |p{0.3in} | p{0.33in} | p{0.33in} |p{0.34in} | p{0.34in} |}
\caption{\small{ Invariants of $\LL_j$ for $\left(\frac{p_1}{p_2}\right)=1$ and $\left(\frac{\pi_1}{\pi_3}\right)=1$} \label{32}}\\
\hline
 $d = p_1.p_2.q$ & $m$, $n$   & $ N $
& $\mathbf{C}l(\LL_1)$ & $\mathbf{C}l(\LL_2)$ & $\mathbf{C}l(\LL_3)$ & $\mathbf{C}l(\LL_4)$ & $\mathbf{C}l(\LL_5)$ & $\mathbf{C}l(\LL_6)$ & $\mathbf{C}l(\LL_7)$ \\
\hline
\endfirsthead
\hline
 $d = p_1.p_2.q$  & $m$, $n$  & $ N $
& $\mathbf{C}l(\LL_1)$ & $\mathbf{C}l(\LL_2)$ & $\mathbf{C}l(\LL_3)$ & $\mathbf{C}l(\LL_4)$ & $\mathbf{C}l(\LL_5)$ & $\mathbf{C}l(\LL_6)$ & $\mathbf{C}l(\LL_7)$ \\
\hline
\endhead
$3535 = 5.101.7 $ & $2$, $2 $ & $1$ & $(168, 28) $ & $ (84, 2) $ & $ (84, 2) $ & $(28,14) $ & $ (28,14)$ & $(112,2)$ & $(112,2)$\\
$6815 = 5.29.47 $ & $2$, $2 $ & $-1$ & $(2760, 12) $ & $ (276, 2) $ & $ (276, 2) $ & $(276,6) $ & $ (276,6)$ & $(1840,2)$ & $(1840,2)$\\
$6915 = 461.5.3 $ & $2$, $4 $ & $-1$ & $(10080, 12) $ & $ (420, 6) $ & $ (420, 6) $ & $(252,6) $ & $ (252,6)$ & $(1344,6)$ & $(1344,2)$\\
$7635 = 5.509.3 $ & $2$, $2 $ & $-1$ & $(840, 60) $ & $ (420, 2) $ & $ (420, 2) $ & $(60,10) $ & $ (60,10)$ & $(240,2)$ & $(240,2)$\\
$8723 = 13.61.11 $ & $2$, $2 $ & $1$ & $(840, 60) $ & $ (420, 2) $ & $ (420, 2) $ & $(420,2) $ & $ (420,2)$ & $(112,2)$ & $(112,2)$\\
$12215 = 5.349.7 $ & $2$, $2 $ & $-1$ & $(19320, 4) $ & $ (276, 2) $ & $ (276, 2) $ & $(644,2) $ & $ (644,2)$ & $(1840,2)$ & $(1840,2)$\\
$15915 = 1061.5.3 $ & $2$, $3 $ & $-1$ & $(123760, 4) $ & $ (364, 2) $ & $ (364, 2) $ & $(476,2) $ & $ (476,2)$ & $(1120,2)$ & $(1120,2)$\\
$16135 = 5.461.7 $ & $2$, $4 $ & $-1$ & $(36960, 12) $ & $ (924, 6) $ & $ (924, 6) $ & $(660,6) $ & $ (660,6)$ & $(2112,6)$ & $(2112,2)$\\
$17139 = 197.29.3 $ & $2$, $3 $ & $-1$ & $(18480, 12) $ & $ (420, 6) $ & $ (420, 6) $ & $(924,6) $ & $ (924,6)$ & $(672,6)$ & $(672,2)$\\
\hline
\end{longtable}
\begin{longtable}{| p{0.83in} | p{0.23in}| p{0.45in} |p{0.4in} |p{0.4in} | p{0.4in} | p{0.4in} |p{0.35in} | p{0.34in} |}
\caption{\small{ Invariants of $\LL_j$ for $\left(\frac{p_1}{p_2}\right)=1$ and $\left(\frac{\pi_1}{\pi_3}\right)=-1$} \label{33}}\\
\hline
 $d = p_1.p_2.q$ & $m$, $n$
& $\mathbf{C}l(\LL_1)$ & $\mathbf{C}l(\LL_2)$ & $\mathbf{C}l(\LL_3)$ & $\mathbf{C}l(\LL_4)$ & $\mathbf{C}l(\LL_5)$ & $\mathbf{C}l(\LL_6)$ & $\mathbf{C}l(\LL_7)$ \\
\hline
\endfirsthead
\hline
 $d = p_1.p_2.q$  & $m$, $n$
& $\mathbf{C}l(\LL_1)$ & $\mathbf{C}l(\LL_2)$ & $\mathbf{C}l(\LL_3)$ & $\mathbf{C}l(\LL_4)$ & $\mathbf{C}l(\LL_5)$ & $\mathbf{C}l(\LL_6)$ & $\mathbf{C}l(\LL_7)$ \\
\hline
\endhead
$2135 = 5.61.7 $ & $3$, $1 $ &  $(264, 12) $ & $ (66, 2, 2) $ & $ (66, 2, 2) $ & $(66,2, 2) $ & $ (66,2, 2)$ & $(88,4)$ & $(88,4)$\\
$2235 = 149.5.3 $ & $3$, $1 $ & $(168, 28) $ & $ (42, 6, 2) $ & $ (42, 6, 2) $ & $(42,6, 2) $ & $ (42,6, 2)$ & $(24,4)$ & $(24,12)$\\
$4035 = 5.269.3 $ & $3$, $1 $ & $(1848, 12) $ & $ (42, 2, 2) $ & $ (42, 2, 2) $ & $(66,2, 2) $ & $ (66,2, 2)$ & $(24,12)$ & $(24,12)$\\
$4147 = 29.13.11 $ & $3$, $1 $ &  $(120, 60) $ & $ (30, 6, 2) $ & $ (30, 6, 2) $ & $(30,2, 2) $ & $ (30,2, 2)$ & $(24,4)$ & $(24,4)$\\
$5835 = 389.5.3 $ & $3$, $1 $ &  $(264, 44) $ & $ (66, 2, 2) $ & $ (66, 2, 2) $ & $(66,2, 2) $ & $ (66,2, 2)$ & $(24,4)$ & $(24,4)$\\
$7015 = 5.61.23 $ & $3$, $1 $ & $(168, 84) $ & $ (70, 14, 2) $ & $ (70, 14, 2) $ & $(210,2, 2) $ & $ (210,2, 2)$ & $(840,20)$ & $(168,4)$\\
$10515 = 701.5.3 $ & $4$, $1 $ & $(1360, 68) $ & $ (170, 2, 2) $ & $ (170, 2, 2) $ & $(170,2, 2) $ & $ (170,2, 2)$ & $(80,4)$ & $(40,8)$\\
$11687 = 13.29.31 $ & $3$, $1 $ & $(3192, 12) $ & $ (798, 2, 2) $ & $ (798, 2, 2) $ & $(114,2, 2) $ & $ (114,2, 2)$ & $(456,4)$ & $(456,4)$\\
$12315 = 821.5.3 $ & $3$, $1 $ & $(2040, 60) $ & $ (30, 30, 2) $ & $ (30, 30, 2) $ & $(510,2, 2) $ & $ (510,2, 2)$ & $(120,12)$ & $(120,12)$\\
$12963 = 149.29.3 $ & $4$, $1 $ & $(1680, 420) $ & $ (210, 2, 2) $ & $ (210, 2, 2) $ & $(210,2, 2) $ & $ (210,2, 2)$ & $(40,40)$ & $(80,20)$\\
\hline
\end{longtable}
\begin{longtable}{| c | c | c | c | c | c | c | c | c |}
\caption{\small{ Invariants of $\KK_j$ for $\left(\frac{p_1}{p_2}\right)=-1$ and $\left(\frac{\pi_1}{\pi_3}\right)=1$} \label{34}}\\
\hline
 $d = p_1.p_2.q$ & $m$, $n$
& $\mathbf{C}l(\KK_1)$ & $\mathbf{C}l(\KK_2)$ & $\mathbf{C}l(\KK_3)$ & $\mathbf{C}l(\KK_4)$ & $\mathbf{C}l(\KK_5)$ & $\mathbf{C}l(\KK_6)$ & $\mathbf{C}l(\KK_7)$ \\
\hline
\endfirsthead
\hline
 $d = p_1.p_2.q$  & $m$, $n$
& $\mathbf{C}l(\KK_1)$ & $\mathbf{C}l(\KK_2)$ & $\mathbf{C}l(\KK_3)$ & $\mathbf{C}l(\KK_4)$ & $\mathbf{C}l(\KK_5)$ & $\mathbf{C}l(\KK_6)$ & $\mathbf{C}l(\KK_7)$ \\
\hline
\endhead
$795 = 5.53.3 $ & $2$, $1$ & $(20, 2) $ & $ (12, 2) $ & $ (8, 4) $ & $(2,2,2) $&$ (4,2)$ & $(4,2)$ & $(2,2,2)$\\
$2955 = 5.197.3 $ & $2$, $1$ & $(132, 2) $ & $ (60, 2) $ & $ (24, 12) $ & $(6,2,2) $&$ (12,2)$ & $(12,2)$ & $(6,2,2)$\\
$4755 = 5.317.3 $ & $2$, $1$ & $(156, 6) $ & $ (60, 6) $ & $ (24, 12) $ & $(6,6,6) $&$ (12,6)$ & $(12,6)$ & $(6,6,6)$\\
$6095 = 5.53.23 $ & $2$, $1$ & $(84, 6) $ & $ (84, 6) $ & $ (168, 12) $ & $(42,2,2) $&$ (84,2)$ & $(84,2)$ & $(42,2,2)$\\
$8787 = 29.101.3 $ & $2$, $1$ & $(60, 6) $ & $ (84, 6) $ & $ (120, 4) $ & $(6,2,2) $&$ (12,2)$ & $(12,2)$ & $(6,2,2)$\\
$10255 = 5.293.7 $ & $3$, $1$ & $(396, 6) $ & $ (396, 6) $ & $ (528, 4) $ & $(66,2,2) $&$ (660,2)$ & $(660,2)$ & $(66,2,2)$\\
$15587 = 13.109.11 $ & $3$, $1$ & $(684, 2) $ & $ (180, 6) $ & $ (144, 4) $ & $(18,2,2) $&$ (36,2)$ & $(36,2)$ & $(18,2,2)$\\
$19155 = 5.1277.3 $ & $4$, $1$ & $(252, 6) $ & $ (612, 2) $ & $ (288, 4) $ & $(18,2,2) $&$ (36,2)$ & $(36,2)$ & $(18,2,2)$\\
$24195 = 5.1613.3 $ & $4$, $1$ & $(1508, 2) $ & $ (1092, 2) $ & $ (416, 4) $ & $(78,6,2) $&$ (52,2)$ & $(52,2)$ & $(78,6,2)$\\
$26663 = 13.293.7 $ & $5$, $1$ & $(1116, 2) $ & $ (1116, 2) $ & $ (1984, 4) $ & $(62,2,2) $&$ (372,2)$ & $(372,2)$ & $(62,2,2)$\\
\hline
\end{longtable}

\begin{longtable}{| c | c | c | c | c | c | c | c | c |}
\caption{\small{ Invariants of $\KK_j$ for $\left(\frac{p_1}{p_2}\right)=-1$ and $\left(\frac{\pi_1}{\pi_3}\right)=-1$} \label{35}}\\
\hline
 $d = p_1.p_2.q$ & $m$, $n$
& $\mathbf{C}l(\KK_1)$ & $\mathbf{C}l(\KK_2)$ & $\mathbf{C}l(\KK_3)$ & $\mathbf{C}l(\KK_4)$ & $\mathbf{C}l(\KK_5)$ & $\mathbf{C}l(\KK_6)$ & $\mathbf{C}l(\KK_7)$ \\
\hline
\endfirsthead
\hline
 $d = p_1.p_2.q$  & $m$, $n$
& $\mathbf{C}l(\KK_1)$ & $\mathbf{C}l(\KK_2)$ & $\mathbf{C}l(\KK_3)$ & $\mathbf{C}l(\KK_4)$ & $\mathbf{C}l(\KK_5)$ & $\mathbf{C}l(\KK_6)$ & $\mathbf{C}l(\KK_7)$ \\
\hline
\endhead
$455 = 5.13.7 $ & $2$, $1$ & $(20, 2) $ & $ (20, 2) $ & $ (40, 4) $ & $(20,2) $ & $ (10,2,2)$ & $(10,2,2)$ & $(20,2)$\\
$2595 = 5.173.3 $ & $3$, $1$ & $(36, 6) $ & $ (84, 2) $ & $ (48, 4) $ & $(12,2) $&$ (6,2,2)$&$(6,2,2)$&$(12,2)$\\
$3055 = 5.13.47 $ & $2$, $1$ & $(180, 2) $ & $ (36, 6) $ & $ (360, 4) $ & $(468,2) $&$ (18,2,2)$&$(18,2,2)$&$(468,2)$\\
$4255 = 5.37.23 $ & $3$, $1$ & $(180, 2) $ & $ (36, 2) $ & $ (144, 12) $ & $(36,2) $&$ (18,2,2)$&$(18,2,2)$&$(36,2)$\\
$4355 = 5.13.67 $ & $2$, $1$ & $(660, 2) $ & $ (180, 6) $ & $ (120, 4) $ & $(60,2) $&$ (30,2,2)$&$(30,2,2)$&$(60,2)$\\
$5395 = 5.13.83 $ & $2$, $1$ & $(204, 2) $ & $ (60, 2) $ & $ (24, 12) $ & $(12,2) $&$ (6,2,2)$&$(6,2,2)$&$(12,2)$\\
$5495 = 5.157.7 $ & $3$, $1$ & $(84, 6) $ & $ (84, 6) $ & $ (336, 12) $ & $(84,2) $&$ (42,2,2)$&$(42,2,2)$&$(84,2)$\\
$6055 = 5.173.7 $ & $3$, $1$ & $(252, 6) $ & $ (252, 2) $ & $ (144, 4) $ & $(36,2) $&$ (18,2,2)$&$(18,2,2)$&$(36,2)$\\
$7955 = 5.37.43 $ & $3$, $1$ & $(44, 22) $ & $ (308, 2) $ & $ (176, 4) $ & $(44,2) $&$ (22,2,2)$&$(22,2,2)$&$(44,2)$\\
$8355 = 5.557.3 $ & $4$, $1$ & $(380, 2) $ & $ (180, 2) $ & $ (160, 4) $ & $(20,2) $&$ (10,2,2)$&$(10,2,2)$&$(20,2)$\\
\hline
\end{longtable}
\begin{longtable}{| c | c | c | c | c | c | c | c | c |}
\caption{\small{ Invariants of $\LL_j$ for $\left(\frac{p_1}{p_2}\right)=-1$} \label{36}}\\
\hline
 $d = p_1.p_2.q$ & $m$, $n$
& $\mathbf{C}l(\LL_1)$ & $\mathbf{C}l(\LL_2)$ & $\mathbf{C}l(\LL_3)$ & $\mathbf{C}l(\LL_4)$ & $\mathbf{C}l(\LL_5)$ & $\mathbf{C}l(\LL_6)$ & $\mathbf{C}l(\LL_7)$ \\
\hline
\endfirsthead
\hline
 $d = p_1.p_2.q$  & $m$, $n$
& $\mathbf{C}l(\LL_1)$ & $\mathbf{C}l(\LL_2)$ & $\mathbf{C}l(\LL_3)$ & $\mathbf{C}l(\LL_4)$ & $\mathbf{C}l(\LL_5)$ & $\mathbf{C}l(\LL_6)$ & $\mathbf{C}l(\LL_7)$ \\
\hline
\endhead
$455 = 5.13.7 $ & $2$, $1 $ &  $(40, 2) $ & $ (20, 2) $ & $ (20, 2) $ & $(20,2) $ & $ (20,2)$ & $(40,2)$ & $(20,4)$\\
$795 = 5.53.3 $ & $2$, $1 $ &  $(120, 2) $ & $ (20, 2) $ & $ (20, 2) $ & $(12,2) $ & $ (12,2)$ & $(4,4)$ & $(8,2)$\\
$3055 = 5.13.47 $ & $2$, $1 $ &   $(360, 30) $ & $ (2340, 2) $ & $ (2340, 2) $ & $(468,6) $&$ (468,6)$&$(4680,26)$&$(180,4)$\\
$4255 = 5.37.23 $ & $3$, $1 $ & $(720, 6) $ & $ (180, 2) $ & $ (180, 2) $ & $(36,2) $&$ (36,2)$&$(144,6)$&$(72,12)$\\
$4355 = 13.5.67 $ & $2$, $1 $  & $(3960, 6) $ & $ (180, 6) $ & $ (900, 18) $ & $(660,2) $&$ (660,2)$&$(120,2)$&$(60,4)$\\
$4395 = 5.293.3 $ & $3$, $1 $  & $(7920, 2) $ & $ (220, 2) $ & $ (220, 2) $ & $(180,2) $&$ (180,2)$&$(40,4)$&$(80,2)$\\
$4755 = 317.5.3 $ & $2$, $1 $  & $(1560, 6) $ & $ (60, 6) $ & $ (60, 6) $ & $(156,6) $&$ (156,6)$&$(12,12)$&$(24,6)$\\
$5395 = 5.13.83 $ & $2$, $1$ & $(2040, 6) $ & $ (204, 2) $ & $ (204, 2) $ & $(60,2) $&$ (60,2)$&$(24,6)$&$(12,12)$\\
$5495 = 157.5.7 $ & $3$, $1$ & $(336, 6) $ & $ (84, 6) $ & $ (84, 6) $ & $(84,6) $&$ (84,6)$&$(336,6)$&$(168,12)$\\
$6055 = 173.5.7 $ & $3$, $1$ & $(1008, 42) $ & $ (252, 2) $ & $ (252, 2) $ & $(252,6) $&$ (252,6)$&$(144,2)$&$(72,4)$\\
\hline
\end{longtable}

\normalsize
\section{Acknowledgement}
The research of the last author is supported by the Austrian Science Fund (FWF): P 26008-N25.

 \bibliographystyle{alpha}

\end{document}